  \newtheorem{The}{Theorem}[section]
  \newtheorem{Pro}[The]{Proposition}
  \newtheorem{Lem}[The]{Lemma}
  \newtheorem{Cor}[The]{Corollary}
  \newtheorem{Defs}[The]{Definitions}
  \newtheorem{Rem}[The]{Remark}
  \newtheorem{Examp}[The]{Example}
\newtheorem{Ques}[The]{Question}
\newcommand{\M}{\mathcal{M}}
\newcommand{\Z}{\mathcal{Z}}
\let\oldproofname=\proofname
\renewcommand{\proofname}{\textit{\rm\bf\oldproofname}}
\title{\bf\Large A generalization of  uniserial modules and  rings 
 \thanks {The research of the third author was in part supported by a 
 grant from IPM (No. 1402160411). This research is partially
carried out in the IPM-Isfahan Branch.}
  \thanks
{{\it Key Words}:  Weakly uniserial module, Weakly uniserial ring, Torsion-free abelian group }
\thanks {2020{ \it Mathematics Subject Classification}. 16D10, 16D70, 16D60, 16N20, 16S90. }}
\author {{\bf S. Shirzadi$^{\rm a},$}   {\bf R. Beyranvand$^{\rm a,}$\thanks{Corresponding author.}} {~\bf and  A. Moradzadeh-Dehkordi$^{\rm b,c}$} \\
{\small{ $^{{\rm a}}$ Department of Mathematics, Faculty of Science, Lorestan University,  Khorramabad, Iran}}\\
{\small{ $^{{\rm b}}$ Department of Science, Shahreza Campus, University of Isfahan, Iran}} \\
{\small{ $^{{\rm c}}$ School of Mathematics, Institute for Research in Fundamental Sciences
(IPM)}}\vspace{-1mm}\\ {\small{ P.O.Box : 19395-5746, Tehran,
Iran}}\\
   {\small{beyranvand.r@lu.ac.ir}}\vspace{-1mm}\\
   {\small{a.moradzadeh@shr.ui.ac.ir}}\vspace{-1mm}\\
   {\small{shirzadi.sa@fs.lu.ac.ir}}\vspace{-1mm}\\
   {\small{}}}
  \date{}
\begin{document}
\maketitle
\begin{abstract}
We introduce and study  a nontrivial generalization of uniserial modules and rings. A module  is called {\it weakly uniserial} if its submodules are comparable regarding embedding.  Also, a {\it right} (resp., {\it left}) {\it weakly uniserial} ring is a ring which is weakly uniserial as a right (resp., left) module over itself.  In this paper, in addition to providing the properties of weakly uniserial modules and rings, we show that every right $R$-module is weakly uniserial if and only if every 2-generated right  $R$-module is weakly uniserial, if and only if $R\cong M_{n}(D)$ where $D$ is a division ring. Then  it is determined which torsion-free abelian groups of rank $1$ are weakly uniserial. Finally,  when $R$ is a commutative principal ideal domain, the structure of  finitely generated weakly uniserial $R$-modules are completely determined.
\end{abstract}
 
\section{\hspace{-6mm}. Introduction}

Recall that an $R$-module $M$ is said to be {\it uniserial} if its submodules are linearly ordered by inclusion. An $R$-module $M$ is called {\it serial} if it is a direct sum of uniserial modules. A {\it left} (resp., {\it right}) {\it uniserial} ring is a ring which is uniserial as a left (resp., right) module. Also, a ring $R$ is called {\it uniserial} (resp., {\it serial}) if it is both a left and a right uniserial (resp., serial) ring.  Studying serial rings and modules has been done since many  years ago. Köthe was probably the pioneer in this field \cite{Kothe}. He proved that over an Artinian principal ideal ring (a special case of serial rings), every module is a direct sum of cyclic submodules. In addition, Nakayama in \cite[Theorem 17]{Nak1}, showed that every left (right)  $R$-module is a serial module provided that $R$ is an Artinian serial ring.  Skornyakov in \cite{Sko} proved the converse. To observe the major properties of serial rings and modules, we can refer to \cite{Wis}.  Recently, in \cite{BehR,BehQ}, Behboodi et al., introduced and studied the notions of virtually uniserial modules and almost uniserial modules as generalizations of uniserial modules. 
An $R$-module $M$ is said to be {\it almost uniserial} if any two non-isomorphic submodules of $M$ are linearly ordered by inclusion. A  {\it virtually simple module} is a module which 
is isomorphic to each its nonzero submodule.  Also an $R$-module $M$ is called {\it virtually uniserial} if for every finitely generated submodule $0 \neq N \subseteq M$, $N/Rad(N)$ is virtually simple.  \\
In this paper, we introduce and study the concept of  weakly uniserial modules  as a  nontrivial common  generalization of uniserial modules and almost uniserial modules.
Let $M$ be a right $R$-module. We say that $M$ is {\it weakly uniserial} if for any two submodules $N$ and $K$ of $M$, $N$ is embedded in $K$ or $K$ is embedded in $N$.  A {\it left} (resp., {\it right}) {\it weakly uniserial ring} is a ring which is weakly uniserial as a left (resp., right) $R$-module. As usual, a ring $R$ is called {\it weakly uniserial} if it is both a left and a right weakly uniserial ring. We note that every uniserial (almost uniserial) module is  weakly uniserial, but the converse is not true in general. For instance, every vector space with dimension at least $3$ over a division ring is a weakly uniserial module, but it is not almost uniserial.

Among the other basic properties of weakly uniserial modules and rings, we study the following questions:\\
{\bf Question 1:}~{\it Which rings R have the property that every  {\rm (}$2$-generated or injective{\rm )} right R-module is weakly uniserial?}\\
{\bf Question 2:}~{\it When is a torsion-free abelian group of rank $1$ a weakly uniserial group?}

Throughout this paper, all rings have identity elements and all modules are unitary right modules.
For a ring $R$, the Jacobson radical and the right singular  ideal of $R$ are denoted by
${\rm J}(R)$ and $ \Z(R_{R})$, respectively. For a module $M$, the socle, the
injective hull and the singular submodule of $M$ are denoted by ${\rm Soc}(M)$, ${\rm E}(M)$ and $\Z(M)$, respectively. Also we denote  the set of associated primes of $M$   by ${\rm Ass}(M)$.  For a subset $X$ of $R$, the right (resp., left) annihilator
of $X$ in $R$ is denoted by ${\rm r.Ann}_{R}(X)$ (resp., ${\rm l.Ann}_{R}(X)$). By $K \subseteq M$ we usually mean that $K$ is a submodule of $M$ and the notation $N \subseteq_{e} M $ means that $N$ is an essential submodule of $M$. The cardinal number of a set $X$ is denoted by $|X|$ and for any two $R$-modules $M, N$ if there exists an $R$-monomorphism from $M$ to $N$, we write $ M\rightarrowtail N$  otherwise we write $ M\not\rightarrowtail N$. 

The paper is organized as follows. In Section $2$, we give some  basic properties of weakly uniserial modules and rings. It is shown that being weakly uniserial is a Morita invariant property (see \Cref{Morita}). Like of uniform modules, every weakly uniserial module such as $M$ has $|{\rm Ass}(M)|\leq1$, while the class of uniform modules and the class of weakly uniserial modules are independent (see \Cref{Ass(M)}).  In \Cref{domain}, we prove that the class of weakly uniserial rings is a straightforward generalization of domains.
 For  a commutative principal ideal domain $R$ with  the field of fractions  $Q$, we show that if $R$ is local, then $Q$ is a {\rm(}weakly{\rm)} uniserial $R$-module and if  $R$ is not local, then $Q$ is not a weakly uniserial $R$-module (see \Cref{field of fractions}). In particular, $\Bbb{Q}$ (the field of rational numbers) is not weakly uniserial as a $\Bbb{Z}$-module.
 One of the important results in this section is \Cref{N and Socle} which says that if $M$ is a weakly uniserial module such that ${\rm Soc}(M)$ is Dedekind-finite, then $N \subseteq {\rm Soc}(M)$ or ${\rm Soc}(M) \subseteq N$, for any submodule $N$ of $M$. In Section $3$, we answer Question $1$ and completely determine the structure of rings $R$ for which every {\rm (}$2$-generated or injective{\rm )}  right $R$-module is weakly uniserial (see \Cref{main theorem}). 
 In Section $4$, we answer Question $2$ and prove that if  $A$ is a torsion-free abelian group of ${\rm rank}$ $1$, then $A$ is weakly uniserial if and only if ${\rm type}(A)=[(\alpha_{p})]$, where $\alpha_{p}=0$ for all but a finite number $p$ and there is at most one $p$ such that $\alpha_{p}=\infty$ (see \Cref{t.f.a.g}).
  In Section $5$,  it is  completely determined  the structure of  finitely generated weakly uniserial $R$-modules, where $R$ is a commutative principal ideal domain (see \Cref{fundamental}). In particular, it is shown that a finitely generated $\mathbb{Z}$-module $M$ is weakly uniserial if and only if $M \cong \mathbb{Z}^{n} $ or $M \cong \oplus_{n} \mathbb{Z}_{p}$ or $M \cong \mathbb{Z}_{p^{n}}$, where $p$ is a prime number and $n \geq 0$ is an integer. Also the structure of weakly uniserial $\mathbb{Z}$-modules with nonzero socle is given in \Cref{w.uni z-module}.

\section{\hspace{-6mm}. Weakly uniserial modules and rings}

In this section, we give some basic properties of weakly uniserial modules and rings.

\begin{Defs}\label{w-uniserial}
{\rm   Let $M$ be a right $R$-module. We say that $M$ is {\it weakly uniserial} if for any two submodules $N$ and $K$ of $M$, $ N\rightarrowtail K$ or $ K\rightarrowtail N$.  A {\it left} (resp., {\it right}) {\it weakly uniserial ring} is a ring which is weakly uniserial as a left (resp., right) module. As susal, a ring $R$ is called {\it weakly uniserial} if it is both a left and a right weakly uniserial ring.}
\end{Defs}

 First of all, the following examples show that the notions of weakly uniserial rings and modules are nontrivial generalizations of uniserial rings and modules.

\begin{Examp}\label{wuni and uni}
\rm{(1)} For any prime number $p$, the $\mathbb{Z}$-module $\mathbb{Z}_{p} \oplus \mathbb{Z}_{p}$ is weakly uniserial but is not uniserial.

\rm{(2)} Since each nonzero $\Bbb Z$-submodule of $\Bbb Z$ is isomorphic to $\Bbb Z$, the ring $\mathbb{Z}$ is weakly uniserial but is not uniserial.
 \end{Examp}

Recall that an $R$-module $M$ is said to be {\it almost uniserial} if any two non-isomorphic submodules of $M$ are linearly ordered by inclusion (see \cite{BehR}). Clearly uniserial and almost uniserial right $R$-modules are always weakly uniserial, but the following example shows that the converse is not true in general.

\begin{Examp}\label{wuni and almostuni}
{\rm Every vector space with dimension at least $3$ over a division ring is a weakly uniserial module, but it is not almost uniserial.}
 \end{Examp}

Recall that an $R$-module $M$ is said to be {\it virtually simple} if $M \neq 0$ and $M \cong N$ for every nonzero submodule $N$ of $M$ (see \cite{Beh, Veda}). Also, an $R$-module $M$ is called {\it virtually uniserial} if for every finitely generated submodule $0 \neq N \subseteq M$, $N/Rad(N)$ is virtually simple (see \cite{BehQ}). The following example shows that the class of weakly uniserial modules and the class of virtually uniserial modules are independent.

\begin{Examp}\label{wuni and virtuni}
\rm{(1)} As stated  in \Cref{wuni and almostuni}, every vector space with dimension at least $3$ over a division ring is weakly uniserial. But, it is not virtually uniserial by \cite[Proposition 2.4]{BehQ}.

\rm{(2)} By \cite[Example 2.1(1)]{BehQ}, $\mathbb{Q}$ is a virtually uniserial $\mathbb{Z}$-module, but it is not weakly uniserial (see \Cref{Q not weak}(a)).
 \end{Examp}

Consequently, we have the following relationships:
$$\{\rm Uniserial~ modules\} \subsetneq \{\rm Almost~ uniserial~ modules\}\subsetneq \{\rm Weakly~ uniserial~ modules\},$$

\begin{center}
\begin{tikzpicture} 
\filldraw (-2.0,4.5) circle node {$\{ \rm Weakly \ uniserial \ modules \}\ $};
\filldraw (3.5,4.5) circle node {$\{ \rm Virtually \ uniserial \ modules \}.\ $};
\filldraw (0.6,4.74) circle node {$\nsubseteq$};
\filldraw (0.6,4.31) circle node {$\nsupseteq$};
\end{tikzpicture} 
\end{center}

In the following we give some basic properties of weakly uniserial modules and rings.
\begin{Rem}\label{submodule}
\rm{(1)} Every submodule of a weakly uniserial module is weakly uniserial.

\rm{(2)} Every quotient of a weakly uniserial module is not necessarily weakly uniserial. For instance, $\mathbb{Z}$ is a weakly uniserial $\mathbb{Z}$-module, but $\mathbb{Z} \slash 6 \mathbb{Z}$ is not a weakly uniserial $\mathbb{Z}$-module.

\rm{(3)} The notion of weakly uniserial module is preserved under isomorphism.
\end{Rem}

\begin{Pro}\label{central idempotent}
For a ring $R$, the following statements hold:\\
\indent {\rm (a)} If $R$ is right {\rm(}left{\rm)} weakly uniserial, then $R$ has no nontrivial central idempotent.\\
\indent {\rm (b)} If $R \cong \prod_{i \in I} R_{i}$, where $ |I| \geq 2$ and each $R_{i}$ is a ring, then $R$ is neither 
a right nor a left \indent\indent weakly uniserial ring.
\end{Pro}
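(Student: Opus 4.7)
For part (a), my plan is to show the contrapositive by exhibiting two right ideals, neither of which embeds into the other, whenever a nontrivial central idempotent exists. Given a central idempotent $e\neq 0,1$, consider the right ideals $eR$ and $(1-e)R$. Suppose $\varphi\colon eR\to (1-e)R$ is an $R$-module homomorphism; since $e^{2}=e$ and $\varphi$ is $R$-linear, $\varphi(e)=\varphi(e\cdot e)=\varphi(e)\cdot e$. Writing $\varphi(e)=(1-e)s$ for some $s\in R$ and using centrality of $e$ yields $\varphi(e)\cdot e=(1-e)se=(1-e)es=0$, so $\varphi(e)=0$, and consequently $\varphi(er)=\varphi(e)r=0$ for every $r\in R$, i.e.\ $\varphi=0$. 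Because $eR\neq 0$, this rules out any embedding $eR\rightarrowtail (1-e)R$. By exactly the symmetric computation (swapping the roles of $e$ and $1-e$), no embedding $(1-e)R\rightarrowtail eR$ exists either. Hence $R$ fails to be right weakly uniserial; the left case is verbatim, using the same calculation on left ideals $Re$ and $R(1-e)$.

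For part (b), I would simply reduce to (a). If $R\cong\prod_{i\in I}R_{i}$ with $|I|\geq 2$, then for any fixed $i_{0}\in I$ the element
\[
e_{i_{0}}=(0,\ldots,0,1_{R_{i_{0}}},0,\ldots)
\]
is a central idempotent (products are computed componentwise, so every component is central in its factor, and the identity is central in any ring). Because $|I|\geq 2$, there is some index $j\neq i_{0}$, whence $e_{i_{0}}\neq 0$ and $e_{i_{0}}\neq 1$, making $e_{i_{0}}$ a nontrivial central idempotent in $R$. Applying (a) to both the right and the left module structure finishes the proof.

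The only nontrivial step is the small computation in (a); once one notices that $\varphi(e)$ must lie in $(1-e)R$ and simultaneously satisfy $\varphi(e)=\varphi(e)e$, the centrality of $e$ collapses it to $0$. Everything else is bookkeeping, and (b) is an immediate specialization.
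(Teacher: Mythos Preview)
Your proof is correct and follows essentially the same strategy as the paper's. The only cosmetic difference is that the paper packages the key computation as an annihilator containment (an embedding $eR\rightarrowtail(1-e)R$ would force ${\rm r.Ann}_R((1-e)R)\subseteq{\rm r.Ann}_R(eR)$, hence $e=0$), whereas you show directly that ${\rm Hom}_R(eR,(1-e)R)=0$; part (b) is deduced from (a) in both cases.
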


\begin{proof}
 (a). Assume that $e$ is a central idempotent of $R$. Thus  $R=eR \oplus (1-e)R$ and since $R$ is right weakly uniserial, $eR \rightarrowtail (1-e)R$ or $(1-e)R \rightarrowtail eR$. If $eR \rightarrowtail (1-e)R$, then 
 $ {\rm r.Ann}_{R}(1-e)R \subseteq {\rm r.Ann}_{R}(eR)$ and so $e=0$. If $(1-e)R \rightarrowtail eR$, then $ {\rm r.Ann}_{R}(eR) \subseteq {\rm r.Ann}_{R}(1-e)R$ and so $e=1$. The left is similarly.

 (b) follows from (a).
\end{proof}

The following result shows that the weakly uniserial property is Morita invariant.

\begin{Pro}\label{Morita}
Being weakly uniserial is a Morita invariant property.
\end{Pro}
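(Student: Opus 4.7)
The plan is to reduce the statement to a purely categorical fact: if $F\colon \mathrm{Mod}\text{-}R \to \mathrm{Mod}\text{-}S$ is an equivalence of categories with quasi-inverse $G$, then for any right $R$-module $M$, the module $M$ is weakly uniserial if and only if $F(M)$ is. Since Morita equivalent rings are precisely those with equivalent module categories, this yields the desired invariance, and in particular shows that the class of rings $R$ for which every right $R$-module is weakly uniserial is Morita invariant.

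I would first record two standard inputs about any category equivalence $F$. First, $F$ preserves monomorphisms, because ``$f$ is mono'' is the categorical condition that $fg=fh$ implies $g=h$, which is transported by any equivalence. Second, $F$ induces a bijection, up to isomorphism, between the poset of submodules of $M$ and the poset of submodules of $F(M)$: given a submodule $A \hookrightarrow F(M)$, applying $G$ yields a monomorphism $G(A) \hookrightarrow GF(M) \cong M$ whose image is a submodule $N \subseteq M$ with $F(N) \cong A$ as a subobject of $FG(F(M)) \cong F(M)$.

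Given these two inputs, the main argument is a one-liner. Let $A$ and $B$ be two submodules of $F(M)$, and pick submodules $N,K \subseteq M$ with $F(N) \cong A$ and $F(K) \cong B$. Since $M$ is weakly uniserial, after swapping if necessary we may assume $N \rightarrowtail K$. Applying $F$ and using that $F$ preserves monomorphisms gives $A \cong F(N) \rightarrowtail F(K) \cong B$, so $A \rightarrowtail B$, proving $F(M)$ is weakly uniserial. The reverse implication follows symmetrically by the same argument with $G$ in place of $F$.

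The only mildly delicate point, and the main ``obstacle'' (though it is quite standard), is the precise identification of submodules of $F(M)$ with $F$-images of submodules of $M$; this is exactly the well-known fact that an equivalence of abelian categories preserves and reflects the subobject structure, and once invoked the proof is essentially formal. No special property of the rings $R,S$ enters beyond the existence of the equivalence, which is the defining feature of Morita equivalence.
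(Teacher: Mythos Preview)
Your proposal is correct and follows essentially the same route as the paper: transport submodules $A,B$ of $F(M)$ back to submodules of $M$ via the quasi-inverse $G$, use that $M$ is weakly uniserial to get an embedding, then push it forward again by $F$. The only difference is cosmetic---the paper writes out the chain of maps explicitly using the unit and counit isomorphisms $\xi\colon GF\to 1$ and $\eta\colon FG\to 1$, whereas you invoke the standard categorical facts (equivalences preserve monomorphisms and identify subobject lattices) at a higher level of abstraction.
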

\begin{proof}
Assume that $R$ and $S$ are Morita equivalent rings via inverse equivalences $F : Mod_{R} \rightarrow Mod_{S}$, $G : Mod_{S} \rightarrow Mod_{R}$, and let $\eta : FG \rightarrow 1_{Mod_{S}}$, $\xi : GF \rightarrow 1_{Mod_{R}}$ be the corresponding natural isomorphisms. Suppose that $M$ is a weakly uniserial right $R$-module. We show that $F(M)$ is a weakly uniserial right $S$-module. Assume that $A$ and $B$ are submodules of $F(M)$. Consider the inclusions $i_{A} : A \rightarrow F(M) $ and  $i_{B} : B \rightarrow F(M) $. Then by \cite[Proposition 21.2]{AF}, $G(A) \overset{G(i_{A})} \longrightarrow GF(M)$ and $G(B)\overset{G(i_{B})} \longrightarrow GF(M)$ are monomorphisms and so we have the monomorphisms $G(A) \overset{G(i_{A})} \longrightarrow GF(M) \overset{\xi} \longrightarrow M$, $G(B) \overset{G(i_{B})} \longrightarrow GF(M) \overset{\xi} \longrightarrow M$. Thus $G(A) \cong \xi G(i_{A})(G(A)) \subseteq M$ and $G(B) \cong \xi G(i_{B})(G(B)) \subseteq M$ and since $M$ is  weakly uniserial, we assume that $h: \xi G(i_{A})(G(A)) \rightarrow  \xi G(i_{B})(G(B))$ is a monomorphism. Again, by \cite[Proposition 21.2]{AF}, we have the following monomorphism from $A$ to $B$:
\begin{center}
$ A \overset{\eta^{-1}} \longrightarrow FG(A) \cong F(\xi G(i_{A})(G(A))) \overset{F(h)} \longrightarrow F(\xi G(i_{B})(G(B))) \cong F(G(B)) \overset{\eta} \longrightarrow B $.
\end{center}
\end{proof}

A right $R$-module $M$ is called \textit{prime} if $M \neq 0$ and ${\rm Ann}_{R}(M) = {\rm Ann}_{R}(N)$, for any $0 \neq N \subseteq M$. A two sided ideal $P$ of $R$ is called an \textit{associated prime} of $M$ if there exists $N \subseteq M$ such that $P = {\rm Ann}_{R}(N)$ and $N$ is a prime $R$-module. The set of associated primes of $M$ is denoted by ${\rm Ass}(M)$. Also $M$ is called \textit{semiprime} (\textit{weakly prime}) if ${\rm Ann}_{R}(N)$ is a semiprime (prime) ideal of $R$, for any nonzero submodule $N$ of $M$. For more details see \cite{Karimi}.

\begin{Pro}\label{Ass(M)}
For a ring $R$, the following statements hold:\\
\indent {\rm (a)} If $M$ is a weakly uniserial right $R$-module, then $|{\rm Ass}(M)|\leq1$. Moreover, if $R$ satisfies \indent\indent in the ascending chain condition on two sided ideals, then $|{\rm Ass}(M)|=1$.\\
\indent {\rm (b)} If $M$ is a weakly uniserial right $R$-module and $N \subseteq M$ is a prime $R$-module, then  \indent\indent ${\rm Ann}_{R}(N)$ is a  maximal member in the set of $\left\lbrace{\rm Ann}_{R}(K) \mid  0\neq K\subseteq M \right\rbrace $.\\
\indent {\rm (c)} A weakly uniserial right $R$-module $M$ is weakly prime if and only if it is semiprime.\\
\indent {\rm (d)} A right {\rm(}left{\rm)} weakly uniserial ring $R$ is prime if and only if it is semiprime.
\end{Pro}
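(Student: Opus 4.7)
My plan is to extract a single structural consequence of weak uniseriality and reuse it in all four parts. If $M$ is weakly uniserial and $K,L$ are nonzero submodules of $M$, then any embedding $K\rightarrowtail L$ yields an isomorphic copy $K'\subseteq L$ with ${\rm Ann}_R(K)={\rm Ann}_R(K')\supseteq{\rm Ann}_R(L)$, so the family $\{{\rm Ann}_R(K):0\neq K\subseteq M\}$ is totally ordered by inclusion. With this observation in hand, each of (a)--(d) becomes a short calculation.

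For (a), I would take $P_1,P_2\in{\rm Ass}(M)$ realized by prime submodules $N_1,N_2$. Weak uniseriality produces, say, $N_1\cong N_1'\subseteq N_2$; primeness of $N_2$ gives ${\rm Ann}_R(N_1')={\rm Ann}_R(N_2)$, hence $P_1=P_2$. For the ACC clause (with $M\neq 0$ implicit), the family $\{{\rm Ann}_R(K):0\neq K\subseteq M\}$ has a maximal element ${\rm Ann}_R(N_0)$, and this maximality forces every nonzero submodule of $N_0$ to share the annihilator $P={\rm Ann}_R(N_0)$, so $N_0$ is prime and $P\in{\rm Ass}(M)$. Part (b) uses exactly the same dichotomy: given ${\rm Ann}_R(N)\subseteq{\rm Ann}_R(K)$ with $N$ prime and $K\neq 0$, either $K\rightarrowtail N$ (apply primeness of $N$ to the isomorphic copy) or $N\rightarrowtail K$ (apply the containment-of-annihilators observation above), and in both cases equality follows.

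The hard part will be (c). One direction is trivial since every prime ideal is semiprime. For the converse, I would fix $0\neq N\subseteq M$, set $P={\rm Ann}_R(N)$, and suppose $IJ\subseteq P$ with $J\not\subseteq P$; the goal is $I\subseteq P$. The essential preparatory move is the computation $(JI)^2\subseteq J\,P\,I\subseteq P$, which by semiprimeness of $P$ upgrades to $JI\subseteq P$; hence both $(NI)J=0$ and $(NJ)I=0$. If $NI=0$, then $I\subseteq P$ and we are done, so assume $NI\neq 0$. Weak uniseriality then supplies an embedding $\phi\colon NI\rightarrowtail NJ$ (the reverse direction is symmetric and, by the same argument applied with $J$ in place of $I$, would force $J\subseteq P$, contradicting the hypothesis). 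The core step is the $R$-linearity manipulation: for $x\in NI$ and $i\in I$ we have $\phi(xi)=\phi(x)\,i\in(NJ)I=0$, so injectivity of $\phi$ gives $xi=0$; hence $NI^2=0$, so $I^2\subseteq P$, and a final use of semiprimeness of $P$ yields $I\subseteq P$.

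Finally, (d) is the specialization of the (c) argument to $M=R_R$ (or ${}_RR$): for nonzero two-sided ideals $I,J$ with $IJ=0$, $(JI)^2=J(IJ)I=0$ together with semiprimeness of $R$ gives $JI=0$, and an embedding $\phi\colon I\rightarrowtail J$ of right $R$-modules then satisfies $\phi(I)I\subseteq JI=0$, so $\phi(xi)=\phi(x)i=0$ yields $xi=0$ and hence $I^2=0$, contradicting semiprimeness of $R$. The recurring difficulty is turning the abstract embedding provided by weak uniseriality into a concrete identity among annihilators; the trick of feeding $xi$ back through $\phi$ and exploiting $R$-linearity is precisely what lets the weakly uniserial hypothesis upgrade semiprime to prime.
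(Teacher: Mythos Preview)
Your argument is correct and follows essentially the same route as the paper. The paper proves (a) and (b) via the same embedding-to-annihilator-containment step you isolate; for (c) it also first shows $NJI=0$ from $NIJ=0$ via $(JI)^2\subseteq{\rm Ann}_R(N)$ and semiprimeness, then uses the embedding $NI\rightarrowtail NJ$ to deduce $I\subseteq{\rm Ann}_R(NJ)\subseteq{\rm Ann}_R(NI)$ and hence $NI^2=0$ (your explicit $R$-linearity computation $\phi(xi)=\phi(x)i\in (NJ)I=0$ is exactly this containment unpacked); and (d) is obtained in the paper by specializing (c) to $N=R$, just as you do. The only cosmetic differences are that you frame the annihilator comparison as a single ``totally ordered'' lemma up front, you supply the standard ACC argument for ${\rm Ass}(M)\neq\varnothing$ directly rather than citing \cite[Lemma~3.58]{Lam}, and in (c) you fix $J\not\subseteq P$ at the outset so that the case $NJ\rightarrowtail NI$ is disposed of by contradiction rather than by symmetry.
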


\begin{proof}
 (a). Assume that $P$ and $Q$ are distinct associated primes of $M$. Then $P= {\rm Ann}_{R}(N)$ and $Q={\rm Ann}_{R}(K)$, for some prime submodules $N$ and $K$ of $M$. Since $M$ is weakly uniserial we can assume that $ N \overset{f}\rightarrowtail K$ and so that $N\cong f(N) \subseteq K$. Since $K$ is prime we conclude that $P={\rm Ann}_{R}(N)={\rm Ann}_{R}(f(N))={\rm Ann}_{R}(K)=Q$, a contradiction. The second part of this statement follows from the first part and \cite[Lemma 3.58]{Lam}.

 (b). Assume that $0\neq K\subseteq M $ and ${\rm Ann}_{R}(N) \subseteq {\rm Ann}_{R}(K)$. Since $M$ is weakly uniserial, $ N \rightarrowtail K$ or $K \rightarrowtail N$. If $ N \overset{f} \rightarrowtail K$, then $N \cong f(N) \subseteq K$ and so ${\rm Ann}_{R}(K) \subseteq{\rm Ann}_{R}(f(N))={\rm Ann}_{R}(N)$. Therefore, in this case 
 ${\rm Ann}_{R}(N) = {\rm Ann}_{R}(K)$. If $ K \overset{g} \rightarrowtail N$, then $K \cong g(K) \subseteq N$ and since $N$ is prime we conclude that ${\rm Ann}_{R}(K)={\rm Ann}_{R}(g(K))={\rm Ann}_{R}(N)$, as desired.

 (c). Assume that $M$ is semiprime and there exists $N \subseteq M$ such that $NIJ=0$, for some ideals $I$ and $J$  of $R$.  Note that $N(JI)^{2} = NJIJI \subseteq NIJI=0$ and since $M$ is semiprime we conclude that $NJI=0$. By symmetry, let  $NI \rightarrowtail NJ$.  Then  ${\rm Ann}_{R}(NJ)\subseteq   {\rm Ann}_{R}(NI)$ and so  $NI^2=0$. Since  $M$ is semiprime,  $NI=0$,  as desired.  The converse is clear.

 (d). By substituting $R$ for $N$ in the proof of $\rm {(c)}$, the result is obtained
\end{proof}

\begin{Examp}
{\rm Since $\mathbb{Z} \times \mathbb{Z}$ is not a prime ring, by \Cref{Ass(M)}(d), it is not a right (left) weakly uniserial ring.}
\end{Examp}

\begin{Cor}
If $R$ is a semiprime right weakly uniserial ring, then ${\rm Soc}(R_{R}) = 0$ or $\Z(R_{R})=0={\rm J}(R_{R})$.   
\end{Cor}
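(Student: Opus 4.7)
The plan is to first apply \Cref{Ass(M)}(d) to upgrade the hypothesis: since $R$ is semiprime and right weakly uniserial, $R$ is in fact a prime ring. I may then assume ${\rm Soc}(R_{R}) \neq 0$ and aim to show $\Z(R_{R}) = 0 = {\rm J}(R_{R})$.

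The key structural input I would use is the classical fact that in a semiprime ring every minimal right ideal has the form $eR$ for some nonzero idempotent $e \in R$, and consequently ${\rm Soc}(R_{R}) = {\rm Soc}({}_{R}R)$ is a two-sided ideal. Since $R$ is prime, every nonzero two-sided ideal of $R$ is essential as a right ideal, so ${\rm Soc}(R_{R}) \subseteq_{e} R_{R}$. This reduces the problem to showing that $\Z(R_{R})$ and ${\rm J}(R_{R})$ each intersect ${\rm Soc}(R_{R})$ trivially.

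For the singular ideal: if a minimal right ideal $eR$ (with $e = e^{2} \neq 0$) satisfies $eR \cap \Z(R_{R}) \neq 0$, then by simplicity $eR \subseteq \Z(R_{R})$, so $e \in \Z(R_{R})$; but ${\rm r.Ann}_{R}(e) = (1-e)R$ is a direct summand of $R_{R}$, hence essential only when $(1-e)R = R$, forcing $e = 0$, a contradiction. For the Jacobson radical: if $e \in {\rm J}(R)$, then $1-e$ would be a unit, and the relation $e(1-e) = 0$ again forces $e = 0$. Thus $\Z(R_{R}) \cap {\rm Soc}(R_{R}) = 0$ and ${\rm J}(R_{R}) \cap {\rm Soc}(R_{R}) = 0$, and because ${\rm Soc}(R_{R})$ is essential in $R_{R}$ this forces $\Z(R_{R}) = 0 = {\rm J}(R_{R})$.

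The only mildly delicate point is invoking the standard semiprime facts (minimal right ideals are idempotent-generated and the two one-sided socles coincide, so the socle is a two-sided ideal); once these are granted, the argument is a routine combination of primeness, essentiality, and the idempotent obstruction to membership in $\Z(R_{R})$ or ${\rm J}(R)$.
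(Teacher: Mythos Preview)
Your argument is correct, but it takes a different route from the paper's. Both proofs begin by upgrading semiprime to prime via \Cref{Ass(M)}(d) and assume ${\rm Soc}(R_{R}) \neq 0$. From there, the paper works multiplicatively: it cites \cite[Proposition~7.13]{Lam} for ${\rm Soc}(R_{R}) \cap \Z(R_{R}) = 0$, deduces ${\rm Soc}(R_{R})\,\Z(R_{R}) = 0$ (since the product lands in the intersection), and then primeness kills $\Z(R_{R})$; similarly ${\rm Soc}(R_{R})\,{\rm J}(R) = 0$ is automatic and primeness kills ${\rm J}(R)$. You instead work via essentiality: you use the idempotent structure of minimal right ideals in a semiprime ring to show ${\rm Soc}(R_{R})$ is a two-sided ideal, then invoke primeness to get ${\rm Soc}(R_{R}) \subseteq_{e} R_{R}$, and finally use the idempotent obstruction to show $\Z(R_{R})$ and ${\rm J}(R)$ meet the socle trivially, hence vanish. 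The paper's approach is shorter (two lines, one citation), while yours is more self-contained, reproving the content of \cite[Proposition~7.13]{Lam} by hand and making the role of the idempotents explicit. Both hinge on exactly the same use of primeness, just applied to ``product equals zero'' versus ``intersection equals zero plus essential.''
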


\begin{proof}
We may assume that ${\rm Soc}(R_{R}) \neq 0$. By \cite[Proposition 7.13]{Lam}, ${\rm Soc}(R_{R}) \cap \Z(R_{R})=0$. Thus ${\rm Soc}(R_{R}) \Z(R_{R})=0$ and so by \Cref{Ass(M)}(d),  $\Z(R_{R})=0$.  On the other hand ${\rm Soc}(R_{R}) {\rm J}(R_{R})=0$ and again by \Cref{Ass(M)}(d), ${\rm J}(R_{R})=0$.   
\end{proof}

Recall that an element $a \in R$ is {\it right regular} if ${\rm r.Ann}_{R}(a)=0$. {\it Left regular} elements are defined similarly. As usual, $a \in R$ is {\it regular} if it is both a left and a right regular element. A ring $R$ is said to be {\it right hereditary} if every right ideal of $R$ is projective as a right $R$-module. Also, a ring $R$ is said to be {\it local} if $R$ has a unique maximal right ideal. The following proposition introduces classes and examples of weakly uniserial rings and modules. First we need the following lemma.
\begin{Lem}\label{free}
If $F_{1}$ and $F_{2}$ are free right $R$-modules, then $F_{1} \rightarrowtail F_{2}$ or $F_{2} \rightarrowtail F_{1}$.
\end{Lem}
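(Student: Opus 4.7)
The plan is to reduce the claim to the comparability of the cardinalities of the bases. Choose bases $\{e_{i}\}_{i \in I}$ of $F_{1}$ and $\{f_{j}\}_{j \in J}$ of $F_{2}$, so that $F_{1} \cong R^{(I)}$ and $F_{2} \cong R^{(J)}$ as right $R$-modules. The only content of the lemma is then set-theoretic: which of $|I|, |J|$ is the larger cardinal.

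By the well-ordering principle (equivalently, the trichotomy of cardinals), the two cardinals are comparable, so without loss of generality there exists an injection $\iota : I \hookrightarrow J$. I would then define an $R$-linear map $\phi : F_{1} \to F_{2}$ on basis elements by $\phi(e_{i}) = f_{\iota(i)}$ and extend $R$-linearly; concretely, $\phi\bigl(\sum_{i} e_{i} r_{i}\bigr) = \sum_{i} f_{\iota(i)} r_{i}$, where the sums are finitely supported.

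To verify that $\phi$ is injective, suppose $\phi\bigl(\sum_{i} e_{i} r_{i}\bigr) = 0$. Then $\sum_{i} f_{\iota(i)} r_{i} = 0$ in $F_{2}$, and since $\iota$ is one-to-one the elements $f_{\iota(i)}$ are distinct members of the basis of $F_{2}$; linear independence forces every $r_{i}=0$, so the original element of $F_{1}$ is zero. Hence $\phi$ is a monomorphism and $F_{1} \rightarrowtail F_{2}$. In the opposite case $|J| \leq |I|$ the symmetric argument yields $F_{2} \rightarrowtail F_{1}$.

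There is no genuine obstacle here: once bases are fixed, the argument depends only on linear independence of basis elements, and no structural hypothesis on $R$ is required. The only point worth flagging is that the comparability step invokes some form of the axiom of choice, which is implicit throughout the paper.
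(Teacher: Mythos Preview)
Your argument is correct and is exactly the routine verification the paper has in mind: the paper's own proof reads simply ``The proof is routine,'' and your reduction to cardinal comparability of the bases together with the obvious basis-to-basis map is precisely that routine argument.
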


\begin{proof}
The proof is routine.
\end{proof}

\begin{Pro}\label{domain}
For a ring $R$, the following statements hold:\\
\indent {\rm (a)} If every  nonzero right ideal of $R$ contains a right regular element, then $R$ is right weakly \indent\indent uniserial.\\
\indent {\rm (b)} Every domain is a right and  left weakly uniserial ring.\\ 
\indent {\rm (c)} If $R$ is either a principal right ideal domain or a right hereditary local ring, then every
\indent\indent  projective right $R$-module is weakly uniserial.
\end{Pro}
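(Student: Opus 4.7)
The plan for (a) is to show a statement even stronger than needed: every nonzero right ideal of $R$ already contains an isomorphic copy of the whole regular module $R_R$. Given two nonzero right ideals $I,J$, pick a right regular element $a\in J$; then the right multiplication map $R\to aR\subseteq J$, $r\mapsto ar$, is $R$-linear and injective because ${\rm r.Ann}_{R}(a)=0$, so $R\rightarrowtail J$. Composing with the inclusion $I\hookrightarrow R$ yields $I\rightarrowtail J$. (The zero ideal embeds into anything, so no separate case is required.) Hence any two right ideals are comparable under embedding, and $R_R$ is weakly uniserial.

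Part (b) is an immediate corollary of (a) applied on each side: in a domain, every nonzero element is both left- and right-regular, so every nonzero one-sided ideal trivially contains a regular element.

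For part (c) I would use a uniform strategy: in each of the two hypotheses I will argue that \emph{every submodule of a projective right $R$-module is free}, and then close the argument by a single application of \Cref{free}. Indeed, if $M$ is projective and $N,K\subseteq M$, then $N$ and $K$ will be free, whence \Cref{free} gives $N\rightarrowtail K$ or $K\rightarrowtail N$. To get the freeness of submodules of projectives, I would argue as follows. If $R$ is a principal right ideal domain, I invoke the classical theorem (the non-commutative analogue of the PID structure theorem) stating that every submodule of a free right $R$-module is free, together with the fact that a projective module is a direct summand, hence a submodule, of a free module. If $R$ is a right hereditary local ring, then the hereditary hypothesis forces every submodule of a projective module to be projective, and Kaplansky's theorem that projective modules over a local ring are free upgrades projective to free.

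The only delicate point in the whole proposition is citing the two external ingredients in (c): the ``submodule of free is free'' theorem over a PRID, and Kaplansky's freeness theorem for projective modules over local rings. Once these are invoked, the argument collapses to one use of \Cref{free}, while (a) and (b) are essentially one-line arguments based on the isomorphism $R\cong aR$ for $a$ right regular.
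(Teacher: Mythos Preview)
Your proof is correct and follows essentially the same approach as the paper: for (a) the paper defines the map $I\to J$, $x\mapsto bx$ directly (your composite $I\hookrightarrow R\xrightarrow{r\mapsto ar}J$ is exactly this map), (b) is the same corollary of (a), and for (c) the paper likewise reduces both cases to showing that submodules of projectives are free (citing Lam's results for the PRID and hereditary cases, and Kaplansky for local rings) and then applies \Cref{free}. The only cosmetic slip is calling $r\mapsto ar$ a ``right multiplication'' map; it is left multiplication by $a$, but of course a right $R$-module homomorphism, so the argument is unaffected.
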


\begin{proof}
 (a). Assume that $I$ and $J$ are two nonzero right ideals of $R$ and $b$ is a right regular element of $J$. Then the map $f:I \rightarrow J$ by $f(a)=ba$ is an $R$-monomorphism and so $R$ is a right weakly uniserial ring.

 (b) follows from (a).

 (c). Let $R$ be a principal right ideal domain and $P$ be a projective right $R$-module. There exist a free right $R$-module $F$ and a right $R$-module $K$ such that $F \cong P \oplus K$. By \cite[Corollary 2.27]{Lam}, $P$ is a free right $R$-module. Then it is sufficient to prove that free modules over principal right ideal domains are weakly uniserial. Let $F$ be a free right $R$-module and $F_{1}$, $F_{2}$ are two nonzero submodules of $F$. By \cite[Corollary 2.27]{Lam}, $F_{1}$ and $F_{2}$ are free right $R$-modules and by \Cref{free}, $F_{1} \rightarrowtail F_{2}$ or $F_{2} \rightarrowtail F_{1}$. Therefore, $F$ is weakly uniserial.
 
Let $R$ be a right hereditary local ring and $P$ be a projective right $R$-module. Assume that $P_{1}$ and $P_{2}$ are nonzero submodules of $P$. Since $R$ is right hereditary, by \cite[Corollary 2.26]{Lam}, $P_{1}$ and $P_{2}$ are projective modules and since $R$ is local, $P_{1}$ and $P_{2}$ are free right $R$-modules. Now, by \Cref{free}, $P_{1} \rightarrowtail P_{2}$ or $P_{2} \rightarrowtail P_{1}$ and so  $P$ is a weakly uniserial right $R$-module.
\end{proof}

\begin{Rem}
\indent {\rm (a)} {\rm \Cref{domain}(b) shows that the class of weakly uniserial rings is a straightforward  generalization of domains.}
 
\indent {\rm (b)} 
{\rm The right hereditary and local conditions are necessary in \Cref{domain}(c). For example,
$\mathbb{Z}_{4}$ is local but is not a hereditary ring and  consider $ M =\mathbb{Z}_{4} \oplus \mathbb{Z}_{4} $ as a $\mathbb{Z}_{4}$-module. We show that $M$ is not weakly uniserial.  $N:=(\overline{1}, \overline{1}) \mathbb{Z}_{4} $ and $K:= 2\mathbb{Z}_{4} \oplus 2\mathbb{Z}_{4}$ are $\mathbb{Z}_{4}$-submodules of $M$ that $|N|=|K|$. Since
${\rm Ann}_{\mathbb{Z}_{4}}(N) \neq {\rm Ann}_{\mathbb{Z}_{4}}(K)$, we conclude that $N \not\rightarrowtail  K$ and $K \not\rightarrowtail N $. Therefore $M$ is not weakly uniserial. Also, $\mathbb{Z}_{6}$ is a hereditary (semisimple) and non-local ring. $\mathbb{Z}_{6}$ as a $\mathbb{Z}_{6}$-module is projective, but is not weakly uniserial.}
\end{Rem}

Let $R$ be a commutative principal ideal domain and $a,b \in R$. 
Then $a$ is said to {\it divide}  $b$ in $R$, denoted by $a|b$, if $b=ac$ for some $c\in R$. Two elements  $a$ and $b$ are {\it associates} if $b=au$, where $u$ is a unit of $R$. Also an element $p$ of $R$ is called {\it  irreducible} ({\it prime}) if it is neither zero nor a unit and if its only divisors are units and associates of $p$, i.e., for any $a,b\in R$, if $p|ab$, then we conclude that $p|a$ or $p|b$. Elements $a$ and  $b$ of $R$  are called {\it relatively prime} if $1$ is a  greatest common divisor of $a$ and $b$. In this case we write  ${\rm gcd}(a,b)=1.$

\begin{Examp}\label{examp domain}
{\rm (1) For any index set $I$ with $|I| \geq 2$, the $\mathbb{Z}$-module $\oplus_{I} \mathbb{Z}$ is neither uniserial nor almost uniserial but since $\mathbb{Z}$ is a principal ideal domain by the first part of \Cref{domain}(c), it is weakly uniserial.

(2) Let $\mathbb{Z}_{(p)} = \left\lbrace  a \slash b \in\mathbb{Q} ~ | ~ {\rm gcd}(a,b)={\rm gcd}(b,p)=1 \right\rbrace $ be the localization of $\mathbb{Z}$ at  the prime ideal  $p\mathbb{Z}$.  It is a hereditary (principal ideal domain) local ring. So  by the second part (as well as the first part) of \Cref{domain}(c), for any index set $I$ the $\mathbb{Z}_{(p)}$-module $\oplus_{I} \mathbb{Z}_{(p)}$ is weakly uniserial.

(3) For a nontrivial automorphism $\sigma : F \rightarrow F$ of a field $F$, let $R=F[[x; \sigma]]$ be a formal skew power series ring. Then $R$ is a right (and left) hereditary (principal right ideal domain) local ring and so by the second part (and also the first part) of \Cref{domain}(c), for any index set $I$ the right $R$-module $\oplus_{I} R$ is right weakly uniserial.}
\end{Examp}

In the following, we show that if $R$ is a right weakly uniserial ring, then a localization of $R$ is not necessarily a weakly uniserial right $R$-module.

\begin{Pro}\label{field of fractions}
Let $R$ be a commutative principal ideal domain and $Q$ be the field of fractions of $R$. Then the following statements hold:\\ 
\indent {\rm (a)} If $R$ is local, then $Q$ is a {\rm(}weakly{\rm)} uniserial $R$-module.\\
\indent {\rm (b)} If $R$ is not local, then $Q$ is not a weakly uniserial $R$-module. 
\end{Pro}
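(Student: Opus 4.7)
The plan is to handle (a) and (b) separately, using the valuation-theoretic structure of a PID's submodules of $Q$.

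For (a), I would use that a commutative local PID is a discrete valuation ring. Let $\pi$ generate the unique maximal ideal of $R$ and let $v$ denote the $\pi$-adic valuation extended to $Q^{\times}$. The goal is to show that every nonzero $R$-submodule $N$ of $Q$ is either of the form $\pi^{n}R$ for some $n\in\mathbb{Z}$ or equal to $Q$. Any $0\neq x\in N$ may be written $x=u\pi^{n}$ with $u$ a unit of $R$, and multiplying by $u^{-1}\in R$ gives $\pi^{n}\in N$; hence $\pi^{n}R\subseteq N$. If the set $S=\{v(x):0\neq x\in N\}$ has a minimum $n_{0}$, then $N\subseteq\pi^{n_{0}}R$ and so $N=\pi^{n_{0}}R$. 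Otherwise $S$ is unbounded below, and $N\supseteq\bigcup_{n\in\mathbb{Z}}\pi^{n}R=Q$. Since the family $\{0\}\subsetneq\cdots\subsetneq\pi R\subsetneq R\subsetneq\pi^{-1}R\subsetneq\cdots\subsetneq Q$ is linearly ordered by inclusion, $Q$ is uniserial (and in particular weakly uniserial).

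For (b), since $R$ is not local it possesses two non-associate primes $p$ and $q$, corresponding to distinct maximal ideals $(p),(q)$. I would produce two $R$-submodules of $Q$ that are mutually non-embeddable, namely the localizations $N:=R[p^{-1}]=\{a/p^{n}:a\in R,\;n\geq0\}$ and $K:=R[q^{-1}]$. The key claim is that $\mathrm{Hom}_{R}(N,K)=0$; by symmetry the same will apply with $N$ and $K$ swapped, giving $N\not\rightarrowtail K$ and $K\not\rightarrowtail N$, so $Q$ fails to be weakly uniserial. To prove the claim, let $f:N\to K$ be an $R$-homomorphism and set $c=f(1)\in K$. Because $N,K\subseteq Q$ are torsion-free and $p^{n}\cdot(a/p^{n})=a$, one gets $p^{n}f(a/p^{n})=ac$ in $Q$, forcing $f(a/p^{n})=ac/p^{n}$ computed in $Q$. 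In particular $c/p^{n}\in K$ for all $n\geq0$.

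The main step is to rule this out via the $p$-adic valuation $v_{p}$ on $Q$. Every element of $K$ has denominator a power of $q$, and $v_{p}(q)=0$ since $p\nmid q$; hence $v_{p}(y)\geq0$ for every $y\in K$. Applying this to $y=c/p^{n}$ gives $v_{p}(c)\geq n$ for every $n\geq0$, which forces $c=0$ and therefore $f=0$. The only delicate point to verify is that $v_{p}$ is well defined on $Q^{\times}$ (so that $v_{p}(q)=0$ makes sense); this is standard for a PID, and is the main obstacle only in the sense that one must be careful that $p$ and $q$ are genuinely non-associate primes, which is exactly what the hypothesis ``$R$ not local'' provides.
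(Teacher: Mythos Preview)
Your proof is correct and follows the same overall strategy as the paper: for (a), classify all $R$-submodules of $Q$ using the $\pi$-adic valuation to see they form a chain; for (b), exhibit two submodules of $Q$ with no nonzero homomorphism between them by exploiting a divisibility obstruction.

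The one noteworthy difference is in the choice of submodules in (b). The paper uses the localizations $R_{(p)}$ and $R_{(q)}$ at the prime ideals $(p)$ and $(q)$ (denominators coprime to $p$, respectively to $q$) and argues by contradiction that any nonzero $f\in{\rm Hom}_R(R_{(p)},R_{(q)})$ would be forced to take a value with a forbidden $q$-power in the denominator. You instead use the smaller rings $R[p^{-1}]$ and $R[q^{-1}]$ (denominators restricted to powers of a single prime) and kill $f$ directly by showing $v_p(f(1))\geq n$ for all $n$. The two choices are in a sense dual---the paper inverts every prime \emph{except} one, you invert \emph{only} one---but the mechanism is identical: one module is infinitely $p$-divisible while the target is not. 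Your version is a bit more economical, since $R[p^{-1}]$ is a smaller submodule and the $p$-adic valuation bookkeeping replaces the paper's explicit element-chasing contradiction; on the other hand, the paper's choice connects to the standard localization $R_{(p)}$ already appearing elsewhere in the article.
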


\begin{proof}
(a). Suppose that $R$ is local with the maximal ideal $\M$. Then $\M=pR$ where $p$ is the unique irreducible element of $R$ (up to multiplication by units). Let $I$ be a nonzero proper $R$-submodule of $Q$. If $I \subseteq R$, then $I=p^{n}R$ for some $n \in \mathbb{N} \cup \lbrace 0 \rbrace$. Now if $I \nsubseteq R$, then there exists $m \in \mathbb{N}$ such that $1\slash p^{m} \notin I$, because $I \neq Q $. We chose the smallest number $n$ such that $1\slash p^{n} \notin I$. Then $1\slash p^{n-1} \in I$ and so it is easy to see that $I=(1\slash p^{n-1}) R$. Thus the all of nonzero proper $R$-submodules of $Q$ are

\begin{center}
$\cdots \subseteq p^{2}R \subseteq pR \subseteq R \subseteq (1 \slash p) R \subseteq (1\slash p^{2}) R \subseteq \cdots$.
\end{center}

\begin{flushleft}
Therefore $Q$ is uniserial as an $R$-module.
\end{flushleft}
(b). Suppose that $R$ is not local and $p$, $q$ are non-associate  irreducible  elements of $R$. We claim that ${\rm Hom}_{R}(R_{(p)} , R_{(q)}) =0$ where
 $$ R_{(p)} = \{\ a \slash b \in Q ~ | ~{\rm gcd}(a,b)={\rm gcd}(b,p)=1 \},\ $$
 $$R_{(q)} = \{\ a \slash b \in Q ~| ~ {\rm gcd}(a,b)={\rm gcd}(b,q)=1 \}.\ $$ 
\\
Let $f \in {\rm Hom}_{R}(R_{(p)} , R_{(q)})$ and $f(a \slash b)= c \slash d \neq 0$, for some $a \slash b \in R_{(p)}$ and $c \slash d \in R_{(q)}$. Since every commutative principal ideal domain is a unique factorization domain, we can choose the positive integer $n$ such that $q^{n} \nmid c$. Now let $f(a \slash (bq^{n})) = x \slash y$. Then
$ c \slash d = f (a \slash b) = f ((aq^{n}) \slash (bq^{n})) = f(a \slash (bq^{n}))q^{n}$ thus $c \slash d = (x \slash y)q^{n}$. Hence $cy=dxq^{n}$ and so $c \slash (dq^{n}) = x \slash y $. Therefore $f(a \slash q^{n}b)=c \slash (q^{n}d) \notin R_{(q)}$, a contradiction.
\end{proof}

Let $R$ be a ring and $S$ be the set of all regular elements in a ring $R$. The ring $R$ is called {\it right Ore} if $S$ is right permutable, i.e., $aS \cap sR \neq \emptyset$, for any $a \in R $ and $s \in S$. \textit{Left Ore} ring is defined similarly. As usual, $R$ is called \textit{Ore} if it is both left and right Ore.
It is well-known that $R$ is right Ore if and only if  for any right $R$-module $M$, $t(M)= \lbrace m \in M \mid ms=0$ for some regular element $s \in R\rbrace$ is an $R$-submodule of $M$ (see \cite[Exercise 10.19]{Lam}). Let $M$ be a right $R$-module over a right Ore ring $R$. Then $M$ is called \textit{torsion} if $t(M)=M$ and \textit{torsion-free} if $t(M)=0$. In case $R$ is right Ore, ${M \slash t(M)}$ is always torsion-free.

\begin{Cor}\label{Q not weak}
The following statements hold:\\
\indent {\rm (a)} $\mathbb{Q}$ is not weakly uniserial as a $\mathbb{Z}$-module.\\
\indent {\rm (b)} If $M$ is not a torsion $\mathbb{Z}$-module, then $M_{(0)}$ {\rm (}the localization of $M$ at the prime ideal $(0)${\rm )} 
\indent\indent is not weakly uniserial as a $\mathbb{Z}$-module.
\end{Cor}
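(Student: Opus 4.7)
The plan is to treat part (a) as an immediate specialization of \Cref{field of fractions}(b), and then leverage part (a) to prove part (b) by embedding $\mathbb{Q}$ inside the localization $M_{(0)}$.

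For part (a), I observe that $\mathbb{Z}$ is a commutative principal ideal domain whose field of fractions is $\mathbb{Q}$, and that $\mathbb{Z}$ is not local (the primes $2\mathbb{Z}$ and $3\mathbb{Z}$ are distinct maximal ideals, for instance). Applying \Cref{field of fractions}(b) with $R=\mathbb{Z}$ and $Q=\mathbb{Q}$ gives the conclusion in one line.

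For part (b), the strategy is to exhibit a $\mathbb{Z}$-submodule of $M_{(0)}$ isomorphic to $\mathbb{Q}$ and then invoke \Cref{submodule}(1) (submodules of weakly uniserial modules are weakly uniserial) together with part (a). Since $M$ is not torsion, there exists $m\in M$ of infinite order, i.e.\ $nm\neq 0$ for every $0\neq n\in\mathbb{Z}$. I then consider the map $\varphi:\mathbb{Q}\to M_{(0)}$ given by $\varphi(a/b)=(am)/b$, using the standard description of $M_{(0)}$ as $S^{-1}M$ with $S=\mathbb{Z}\setminus\{0\}$. One checks that $\varphi$ is a well-defined $\mathbb{Z}$-homomorphism and, because $m$ has infinite order, that $\varphi$ is injective: if $(am)/b=0$ in $M_{(0)}$ then $sam=0$ in $M$ for some nonzero $s\in\mathbb{Z}$, forcing $a=0$. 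Hence $\mathbb{Q}\rightarrowtail M_{(0)}$ as $\mathbb{Z}$-modules.

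If $M_{(0)}$ were weakly uniserial as a $\mathbb{Z}$-module, then by \Cref{submodule}(1) so would its submodule $\varphi(\mathbb{Q})\cong\mathbb{Q}$, contradicting part (a). This completes the plan. There is no serious obstacle; the only mild point of care is the well-definedness and injectivity of $\varphi$, which is a routine check about the localization $S^{-1}M$ and the infinite order of $m$.
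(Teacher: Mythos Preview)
Your proposal is correct and follows essentially the same approach as the paper: part (a) is deduced directly from \Cref{field of fractions}(b), and part (b) is proved by choosing an element of infinite order to embed $\mathbb{Q}$ into $M_{(0)}$, then invoking part (a) together with \Cref{submodule}(1). The only cosmetic difference is that the paper phrases the embedding via localizing the cyclic submodule $x\mathbb{Z}\cong\mathbb{Z}$ to obtain $(x\mathbb{Z})_{(0)}\cong\mathbb{Q}\subseteq M_{(0)}$, whereas you write down the explicit map $a/b\mapsto (am)/b$; these are the same construction.
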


\begin{proof}
(a) follows from \Cref{field of fractions}(b).\\
\indent (b). Suppose that $x \in M$ and $nx \neq 0$, for any $0 \neq n \in \mathbb{Z}$. Then $\mathbb{Z} \cong x \mathbb{Z} \subseteq M$ and so $\mathbb{Q} = \mathbb{Z}_{(0)} \cong_{\mathbb{Q}} (x \mathbb{Z})_{(0)} \subseteq M_{(0)}$. Thus, $\mathbb{Q} = \mathbb{Z}_{(0)} \cong (x \mathbb{Z})_{(0)}$ as a $\mathbb{Z}$-module. Since by (a), $\mathbb{Q}_{\mathbb{Z}}$ is not weakly uniserial, $ (x \mathbb{Z})_{(0)}$ is not weakly uniserial. This implies that $M_{(0)}$ is not weakly uniserial as a $\mathbb{Z}$-module.
\end{proof}

In Section 4, a different proof for  \Cref{Q not weak}(a) is given.
Recall that a semisimple right $R$-module $M$ is said to be {\it homogeneous} if every two simple submodules of $M$ are isomorphic.

\begin{Rem}\label{RE}
{\rm Every uniserial module is uniform, however the class of uniform modules and the class of weakly uniserial modules are independent. For instance every not simple homogeneous semisimple module is  weakly uniserial, but it is not uniform. Also  $\mathbb{Q}$  is a uniform $\mathbb{Z}$-module that is not weakly uniserial by  \Cref{Q not weak}(a).}

\end{Rem}

\begin{Examp}\label{Some exam}
\rm{(1)} Every homogeneous semisimple right $R$-module is weakly uniserial.

\rm{(2)} The ring $ R= \left[
         \begin{array}{rr}
              \mathbb{Z}  & \frac{\mathbb{Z}}{2 \mathbb{Z} } \\
              0 &  \mathbb{Z}
          \end{array} \right]$ is neither a right nor a left weakly uniserial ring. To see this, consider two ideals  $I := \left[
         \begin{array}{rr}
             2 \mathbb{Z}  &  0 \\
              0 &  0
          \end{array} \right]$ and $ J := \left[
         \begin{array}{rr}
             0  & 0 \\
              0 &  2\mathbb{Z}
          \end{array} \right]$ of $R$. Since ${\rm r.Ann}_{R}(I) \nsubseteq {\rm r.Ann}_{R}(J)$ and ${\rm r.Ann}_{R}(J) \nsubseteq {\rm r.Ann}_{R}(I)$, we conclude that $ J \not\rightarrowtail I$ and $I \not\rightarrowtail J$. Also, note that ${\rm l.Ann}_{R}(I) \nsubseteq {\rm l.Ann}_{R}(J)$ and ${\rm l.Ann}_{R}(J) \nsubseteq {\rm l.Ann}_{R}(I)$.

\rm{(3)} Let $D$ be a division ring and
 $ R=\lbrace \left[
         \begin{array}{rrr}
              a  &  0 & b \\
              0  &  a & c \\
              0  &  0 & a
          \end{array} \right] | \ a, b, c \in D \rbrace$. Then, $R$ is both left and right weakly uniserial. Note that the only proper right (left) ideals of $R$ are
          $ \left[
         \begin{array}{rrr}
              0  &  0 & D \\
              0  &  0 & D \\
              0  &  0 & 0
          \end{array}\right]$,
          $ \left[
         \begin{array}{rrr}
              0  &  0 & D \\
              0  &  0 & 0 \\
              0  &  0 & 0
          \end{array} \right]$ and
     $ \left[
         \begin{array}{rrr}
              0  &  0 & 0 \\
              0  &  0 & D \\
              0  &  0 & 0
          \end{array} \right]$.
\end{Examp}

\begin{Lem}\label{homogeneous semisimple}
A semisimple right $R$-module is weakly uniserial if and only if it is homogeneous semisimple.
\end{Lem}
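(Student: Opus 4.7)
The plan is to prove both directions directly from the structure theory of semisimple modules, using the fact that any homomorphism between simple modules is either zero or an isomorphism.

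For the forward direction, assume the semisimple module $M$ is weakly uniserial. I would take any two simple submodules $S_{1}, S_{2} \subseteq M$; by weak uniseriality we have (WLOG) a monomorphism $f : S_{1} \rightarrowtail S_{2}$. Since $S_{1}$ is simple $f(S_{1})$ is a nonzero submodule of the simple module $S_{2}$, hence equals $S_{2}$, and therefore $f$ is an isomorphism. Thus any two simple submodules of $M$ are isomorphic, so $M$ is homogeneous semisimple (since a semisimple module is the sum of its simple submodules, a single isomorphism class of simples forces homogeneity).

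For the converse, assume $M$ is homogeneous semisimple of isotype $S$, so that every simple submodule of $M$ is isomorphic to the fixed simple module $S$. Every submodule $N \subseteq M$ is again semisimple (a submodule of a semisimple module), and every simple summand of $N$ is a simple submodule of $M$, hence isomorphic to $S$. Consequently, for any two submodules $N, K \subseteq M$, there are index sets $J, L$ with $N \cong \bigoplus_{J} S$ and $K \cong \bigoplus_{L} S$.

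Comparing the cardinalities $|J|$ and $|L|$ (one is at most the other by the trichotomy of cardinals), I pick the smaller, fix an injection of index sets, and build a componentwise embedding $\bigoplus_{J} S \rightarrowtail \bigoplus_{L} S$, which yields $N \rightarrowtail K$ or $K \rightarrowtail N$ as required. There is no real obstacle here; the only point to be careful about is that a submodule of a homogeneous semisimple module is again homogeneous of the same isotype, which follows immediately from the observation above about simple submodules.
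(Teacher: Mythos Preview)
Your proof is correct and follows essentially the same approach as the paper: the forward direction argues that a monomorphism between simples forces an isomorphism (the paper phrases this via a fixed decomposition $M=\bigoplus_{i\in I}S_i$, you via arbitrary simple submodules, but the content is identical), and for the converse the paper simply writes ``the converse is clear'' while you spell out the cardinality comparison of the index sets---which is exactly the intended argument.
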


\begin{proof}
$(\Rightarrow).$  Set $M= \oplus_{i \in I} S_{i}$, where $S_{i}$ is a simple right $R$-module for any $i \in I$. Since $M$ is weakly uniserial, for any $i, j \in I$, $S_{i} \rightarrowtail S_{j}$ or $S_{j} \rightarrowtail S_{i}$. In any case, we conclude that $S_{i} \cong S_{j}$, as required. The converse is clear.
\end{proof}

Recall that a ring $R$ is said to be {\it right} (\textit{left}) \textit{duo} if every right (left) ideal of $R$ is two-sided. Also a ring $R$ is called {\it right} (\textit{left}) \textit{semi-duo} if every maximal right (left) ideal of $R$ is two-sided.

\begin{Cor}\label{semi-duo}
A right semi-duo ring $R$ is local if and only if every semisimple right $R$-module is weakly uniserial.
\end{Cor}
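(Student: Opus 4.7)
The plan is to reduce immediately via \Cref{homogeneous semisimple}: a semisimple right $R$-module is weakly uniserial if and only if it is homogeneous, i.e.\ all its simple summands are mutually isomorphic. Hence the statement to prove is equivalent to: \emph{$R$ is local if and only if all simple right $R$-modules are pairwise isomorphic} (under the right semi-duo hypothesis). Since every simple right $R$-module is cyclic of the form $R/\mathfrak{m}$ for some maximal right ideal $\mathfrak{m}$, this becomes a statement about the maximal right ideals of $R$.

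For the forward direction, if $R$ is local with unique maximal right ideal $\mathfrak{m}$, then up to isomorphism there is exactly one simple right $R$-module, namely $R/\mathfrak{m}$. Consequently every semisimple right $R$-module is a direct sum of copies of $R/\mathfrak{m}$, which is homogeneous, and thus weakly uniserial by \Cref{homogeneous semisimple}. (The semi-duo hypothesis is not needed here.)

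The substantive direction is the converse, and this is where the semi-duo hypothesis enters. Assume every semisimple right $R$-module is weakly uniserial. Let $\mathfrak{m}$ and $\mathfrak{n}$ be any two maximal right ideals of $R$; I would like to show $\mathfrak{m}=\mathfrak{n}$. The module $R/\mathfrak{m} \oplus R/\mathfrak{n}$ is semisimple, hence by assumption and \Cref{homogeneous semisimple} it is homogeneous, giving $R/\mathfrak{m}\cong R/\mathfrak{n}$ as right $R$-modules. Now because $R$ is right semi-duo, both $\mathfrak{m}$ and $\mathfrak{n}$ are two-sided ideals. The key computation is that for a two-sided maximal right ideal $\mathfrak{m}$ one has $\mathrm{r.Ann}_R(R/\mathfrak{m})=\mathfrak{m}$: clearly $\mathfrak{m}$ annihilates $R/\mathfrak{m}$ since it is two-sided, and conversely any $r$ annihilating $R/\mathfrak{m}$ satisfies $1\cdot r\in\mathfrak{m}$, so $r\in\mathfrak{m}$. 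Isomorphic modules have equal annihilators, so $\mathfrak{m}=\mathfrak{n}$. Thus $R$ has a unique maximal right ideal, i.e., $R$ is local.

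The main (and really only) obstacle is recognizing that the semi-duo hypothesis is exactly what is needed to promote ``isomorphic simple right $R$-modules'' to ``equal annihilator ideals, hence equal maximal right ideals''; without it, two distinct maximal right ideals can produce isomorphic simple right $R$-modules (as happens over full matrix rings), and the argument breaks down.
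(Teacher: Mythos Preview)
Your proof is correct and follows essentially the same approach as the paper: both directions reduce via \Cref{homogeneous semisimple} to the question of whether all simple right $R$-modules are isomorphic, and the converse is handled by forming $R/\mathfrak{m}\oplus R/\mathfrak{n}$, extracting the isomorphism $R/\mathfrak{m}\cong R/\mathfrak{n}$, and then using the semi-duo hypothesis to identify each maximal right ideal with the annihilator of the corresponding simple module. Your write-up is slightly more detailed (you spell out why $\mathrm{r.Ann}_R(R/\mathfrak{m})=\mathfrak{m}$), but the argument is the same.
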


\begin{proof}
Assume that every semisimple right $R$-module is weakly uniserial and ${\mathcal M_1}$ and ${\mathcal M_2}$  are two distinct maximal right ideals of $R$. In right $R$-module  $ R \slash {\mathcal M_1} \oplus R \slash {\mathcal M_2}$,  by \Cref{homogeneous semisimple}, $R \slash {\mathcal M_1} \cong R \slash {\mathcal M_2}$. But since $R$ is right semi-duo,  ${\mathcal M_1}={\rm Ann}_{R}(R \slash {\mathcal M_1})={\rm Ann}_{R}(R \slash {\mathcal M_2})={\mathcal M_2}$ and hence  $R$ is local. The converse is clear by \Cref{homogeneous semisimple}.
\end{proof}

A right $R$-module $M$ is called \textit{semi-Artinian} if for every submodule $N \neq M$, ${\rm Soc}(M/N) \neq 0$.
A ring $R$ is a \textit{right semi-Artinian ring}, if  $R$ is a semi-Artinian right $R$-module. A ring $R$ is right semi-Artinian if and only if every nonzero right $R$-module has an essential socle, if and only if every nonzero right $R$-module has a simple submodule.

\begin{Pro}\label{essential socle}
For a ring $R$, the following statements hold:\\
\indent {\rm (a)} If $M$ is a weakly uniserial right $R$-module with nonzero socle, then ${\rm Soc}(M)$ is a homoge-
\indent\indent neous semisimple essential submodule of $M$.\\
\indent {\rm (b)} If $R$ is a right semi-Artinian, then the socle of any weakly uniserial right $R$-module $M$ is \indent \indent  a  homogeneous semisimple essential submodule of $M$.
\end{Pro}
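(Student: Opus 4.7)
The plan is to prove part (a) directly and then deduce part (b) from (a) together with the definition of right semi-Artinian ring.

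For part (a), I need to check two things about $\mathrm{Soc}(M)$: that it is homogeneous semisimple, and that it is essential in $M$. Homogeneity is almost immediate from the earlier material: $\mathrm{Soc}(M)$ is always semisimple, and by \Cref{submodule}(1) it is weakly uniserial as a submodule of the weakly uniserial module $M$; now \Cref{homogeneous semisimple} forces $\mathrm{Soc}(M)$ to be homogeneous.

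For essentiality, let $0 \neq N \subseteq M$ and use the defining property of weakly uniserial modules to split into two cases. If $N \rightarrowtail \mathrm{Soc}(M)$, then $N$ is isomorphic to a submodule of a semisimple module and hence is itself semisimple, so $N \subseteq \mathrm{Soc}(M)$ and $N \cap \mathrm{Soc}(M) = N \neq 0$. If instead $\mathrm{Soc}(M) \rightarrowtail N$ via some monomorphism $f$, then $f(\mathrm{Soc}(M)) \subseteq N$ is a nonzero semisimple submodule of $M$; since every semisimple submodule of $M$ is contained in $\mathrm{Soc}(M)$, we get $0 \neq f(\mathrm{Soc}(M)) \subseteq N \cap \mathrm{Soc}(M)$. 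In either case $N$ meets $\mathrm{Soc}(M)$ nontrivially, so $\mathrm{Soc}(M) \subseteq_e M$.

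For part (b), since $R$ is right semi-Artinian, every nonzero right $R$-module has a simple submodule, which in particular gives $\mathrm{Soc}(M) \neq 0$ whenever $M \neq 0$. Hence the hypothesis of (a) is automatically satisfied and the conclusion of (b) follows (the case $M=0$ being vacuous). The only delicate point in the whole argument is the second case of the essentiality step, where one must observe that $f(\mathrm{Soc}(M))$, though a priori only a submodule of $N$, is automatically inside $\mathrm{Soc}(M)$ because semisimple submodules of $M$ always lie in the socle; this is what pins the image down inside the desired intersection.
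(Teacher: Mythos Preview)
Your proof is correct and follows essentially the same approach as the paper. The only minor difference is in the essentiality step: the paper compares an arbitrary nonzero $N$ with a single simple submodule $S$ (concluding in either case that $N$ contains a simple submodule), whereas you compare $N$ with $\mathrm{Soc}(M)$ itself; both variants work and rest on the same earlier results (\Cref{submodule}(1) and \Cref{homogeneous semisimple}).
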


\begin{proof}
\rm{(a)}. By \Cref{submodule}(1) and \Cref{homogeneous semisimple}, ${\rm Soc}(M)$ is homogeneous semisimple. Now, suppose that $S$ is a simple submodule of $M$. Then by hypothesis, for every nonzero submodule $N$ of $M$, $N \rightarrowtail S$ or $S \rightarrowtail N$. In any case, $N$ contains a simple submodule and so ${\rm Soc}(M)\subseteq_{e} M$.

\rm{(b)} follows from  (a).
\end{proof}

The \textit{uniform dimension} of a right $R$-module $M$ {\rm (}denoted by ${\rm u.dim}(M){\rm )}$ is the supremum of the set  $\{k ~| ~M ~{\rm contains ~a ~direct ~sum ~of} ~k ~ {\rm nonzero ~submodules} \}$. Also $M$ is called \textit{Dedekind-finite} if for every right $R$-module $N$,  the relation $M \cong M \oplus N$ implies that $N = 0$. It is easy to check that any right $R$-module with finite uniform dimension is Dedekind-finite.

\begin{The}\label{N and Socle}
For a ring $R$, the following statements hold:\\
\indent {\rm (a)} If $M$ is a weakly uniserial right $R$-module such that ${\rm Soc}(M)$ is Dedekind-finite, then \indent\indent $N \subseteq {\rm Soc}(M)$ or ${\rm Soc}(M) \subseteq N$, for any submodule $N$ of $M$. Moreover, ${\rm Soc}(M)=0$ or $M$ \indent\indent is indecomposable or $M$ is homogeneous semisimple.\\
\indent {\rm (b)} If $R$ is a right weakly uniserial ring such that ${\rm Soc}(R_{R})$ is Dedekind-finite, then ${\rm J}(R)^{2}=0$ 
\indent\indent or ${\rm Soc}^{2}(R_{R})= 0$. Moreover, ${\rm Soc}(R_{R})=0$ or $R_{R}$ is indecomposable or $R \cong M_{n}(D)$ \indent\indent where $D$ is a division ring.
\end{The}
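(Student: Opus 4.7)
The plan for (a) is to pair an arbitrary submodule $N \subseteq M$ with ${\rm Soc}(M)$ via the weakly uniserial hypothesis. If ${\rm Soc}(M)=0$ the conclusion is automatic, so assume ${\rm Soc}(M)\neq 0$; then either $N \rightarrowtail {\rm Soc}(M)$ or ${\rm Soc}(M) \rightarrowtail N$. In the first case, $N$ is isomorphic to a submodule of a semisimple module, hence semisimple, and so $N \subseteq {\rm Soc}(M)$. In the second case, fix a monomorphism $f \colon {\rm Soc}(M) \rightarrowtail N$; its image is semisimple and sits inside $N$, so $f({\rm Soc}(M)) \subseteq {\rm Soc}(N) \subseteq {\rm Soc}(M)$. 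Semisimplicity of ${\rm Soc}(M)$ makes $f({\rm Soc}(M))$ a direct summand, giving ${\rm Soc}(M) = f({\rm Soc}(M)) \oplus C \cong {\rm Soc}(M) \oplus C$. Dedekind-finiteness of ${\rm Soc}(M)$ then forces $C = 0$, hence $f({\rm Soc}(M)) = {\rm Soc}(M)$ and in particular ${\rm Soc}(M) \subseteq N$.

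For the moreover of (a), assume ${\rm Soc}(M)\neq 0$ and suppose $M = M_1 \oplus M_2$ with both summands nonzero. Apply the first part of (a) to each $M_i$: either $M_i \subseteq {\rm Soc}(M)$ or ${\rm Soc}(M) \subseteq M_i$. By \Cref{essential socle}(a), ${\rm Soc}(M) \subseteq_e M$, so $M_i \cap {\rm Soc}(M) \neq 0$ for each $i$; this rules out ${\rm Soc}(M) \subseteq M_i$, because it would imply $M_j \cap {\rm Soc}(M) = 0$ for $j \neq i$. Therefore both $M_i \subseteq {\rm Soc}(M)$, giving $M = {\rm Soc}(M)$, and by \Cref{homogeneous semisimple} this module is homogeneous semisimple.

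For (b), take $N = {\rm J}(R)$ in the first assertion of (a): either ${\rm J}(R) \subseteq {\rm Soc}(R_R)$ or ${\rm Soc}(R_R) \subseteq {\rm J}(R)$. Combined with the standard identity ${\rm Soc}(R_R)\cdot {\rm J}(R) = 0$ (since ${\rm J}(R)$ annihilates every simple right $R$-module), the first alternative yields ${\rm J}(R)^2 \subseteq {\rm Soc}(R_R)\cdot {\rm J}(R) = 0$, while the second yields ${\rm Soc}(R_R)^2 \subseteq {\rm Soc}(R_R)\cdot {\rm J}(R) = 0$. For the moreover of (b), the moreover of (a) with $M = R_R$ gives that $R_R$ is either zero-socled, indecomposable, or homogeneous semisimple; in the last case $R$ is semisimple with a unique isomorphism type of simple right module, so the Wedderburn-Artin theorem delivers $R \cong M_n(D)$ for some division ring $D$.

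The pivotal step is the Dedekind-finiteness reduction in (a): from $f({\rm Soc}(M)) \cong {\rm Soc}(M)$ together with $f({\rm Soc}(M)) \subseteq {\rm Soc}(M)$ one must conclude equality, not just isomorphism. Without Dedekind-finiteness a self-embedding of ${\rm Soc}(M)$ could be proper, and the argument would place an isomorphic copy of ${\rm Soc}(M)$ inside $N$ rather than ${\rm Soc}(M)$ itself; everything else (semisimple decomposition, essentiality, annihilator computation, Wedderburn-Artin) is routine bookkeeping once this point is crossed.
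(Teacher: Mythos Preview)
Your proof is correct and follows essentially the same route as the paper's: the same dichotomy on $N$ versus ${\rm Soc}(M)$, the same Dedekind-finiteness argument via a complementary summand of $f({\rm Soc}(M))$ inside ${\rm Soc}(M)$, the same use of \Cref{essential socle}(a) and \Cref{homogeneous semisimple} for the ``moreover'' clause, and the same application of ${\rm Soc}(R_R)\cdot{\rm J}(R)=0$ for part (b). You supply a little more detail in places (e.g., explicitly noting $f({\rm Soc}(M))\subseteq{\rm Soc}(N)\subseteq{\rm Soc}(M)$ and spelling out why ${\rm Soc}(M)\subseteq M_i$ is impossible), but the argument is the same.
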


\begin{proof}
 (a). Assume that $N$ is a nonzero submodule of $M$. Since $M$ is weakly uniserial, $N  \rightarrowtail {\rm Soc}(M)$ or ${\rm Soc}(M) \rightarrowtail N$. If $N   \rightarrowtail {\rm Soc}(M)$, then $N$ is semisimple and so   $N \subseteq {\rm Soc}(M)$. If ${\rm Soc}(M) \overset{f} \rightarrowtail N$, then ${\rm Soc}(M) \cong f({\rm Soc}(M)) \subseteq N$  and so there exists a submodule $K$ of $M$ such that $f({\rm Soc}(M)) \oplus K = {\rm Soc}(M)$. Thus ${\rm Soc}(M) \oplus K \cong {\rm Soc}(M)$ and since ${\rm Soc}(M)$ is Dedekind-finite we conclude that $K=0$. Therefore, ${\rm Soc}(M) = f({\rm Soc}(M)) \subseteq N$.

For the second part of this statement, assume that $M$ is not indecomposable. Then there exist two nonzero submodules $M_{1}$ and $M_{2}$ of $M$ such that $M=M_{1} \oplus M_{2}$. If ${\rm Soc}(M) =0$, we are done but if ${\rm Soc}(M) \neq 0$, then by \Cref{essential socle}(a), ${\rm Soc}(M) \subseteq_{e} M$ and so by the first part of the proof we conclude that $M_{i} \subseteq {\rm Soc}(M)$, for $i =1, 2$. Then $M=M_{1} \oplus M_{2} \subseteq {\rm Soc}(M)$ and so by \Cref{homogeneous semisimple}, $M$ is homogeneous semisimple.

 (b). By (a), ${\rm Soc}(R_{R})\subseteq {\rm J}(R)$ or ${\rm J}(R)\subseteq {\rm Soc}(R_{R})$. On the other hand, ${\rm Soc}(R_{R}){\rm J}(R)=0$ and so ${\rm J}(R) \subseteq {\rm Soc}(R_{R})$ implies that ${\rm J}(R)^{2}=0$ and ${\rm Soc}(R_{R})\subseteq {\rm J}(R)$ implies that ${\rm Soc}^{2}(R_{R})=0$. The second part follows from (a).
\end{proof}

Artinian principal ideal rings were studied in papers of G. K{\"o}the and K. Asano, where it was proved that any Artinian principal right ideal ring is right uniserial (see \cite{AS1,AS2}). In fact, K. Asano proved that an Artinian ring is uniserial if and only if each ideal is a principal right ideal and a principal left ideal. In the following result we give the structure of some right weakly uniserial rings such as the right Artinian principal right ideal rings. A ring $R$ is called \textit{left perfect} if $R \slash {\rm J}(R)$ is semisimple and ${\rm J}(R)$ is left T-nilpotent. 

\begin{Cor}\label{right artinian right weakly}
For a ring $R$, the following statements hold:\\
\indent {\rm (a)} Let $R$ be a left perfect ring such that ${\rm Soc}(R_{R})$ is Dedekind-finite. If $R$ is right weakly \indent\indent uniserial, then $R$ is local or $R \cong M_{n}(D)$ where $D$ is a division ring.

\indent {\rm (b)} If $R$ is a right Artinian right weakly uniserial ring, then $R$ is local or $R \cong M_{n}(D)$ where \indent\indent $D$ is a division ring.

\indent {\rm (c)} If $R$ is a right Artinian principal right ideal ring, then $R$ is right weakly uniserial if and \indent\indent only if $R$ is right uniserial or $R \cong M_{n}(D)$ where $D$ is a division ring.
\end{Cor}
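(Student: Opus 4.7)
The plan is to reduce all three parts to the trichotomy already established in \Cref{N and Socle}(b), which says that for a right weakly uniserial $R$ with Dedekind-finite ${\rm Soc}(R_{R})$, either ${\rm Soc}(R_{R})=0$, or $R_{R}$ is indecomposable, or $R\cong M_{n}(D)$.

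For (a), I would first rule out the case ${\rm Soc}(R_{R})=0$ using the left perfect hypothesis. By Bass's theorem, left perfect is equivalent to the DCC on principal right ideals, so for any $0\neq x\in R$ there is a minimal nonzero principal right ideal $yR\subseteq xR$; minimality among principal right ideals forces $yR$ to be an actual minimal right ideal (any nonzero $z$ in a nonzero right subideal $K\subseteq yR$ gives $zR=yR$, hence $K=yR$). Thus ${\rm Soc}(R_{R})\cap xR\neq 0$, so ${\rm Soc}(R_{R})$ is essential, and in particular nonzero. Applying \Cref{N and Socle}(b), either $R\cong M_{n}(D)$ and we are done, or $R_{R}$ is indecomposable, i.e. $R$ has no nontrivial idempotents. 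Since $J(R)$ is left T-nilpotent it is nil, so idempotents lift modulo $J(R)$, making $R$ semi-perfect. Then every idempotent of $R/J(R)$ lifts to $R$; since $R$ has no nontrivial idempotents, neither does the semisimple ring $R/J(R)$, which forces $R/J(R)$ to be a division ring (a nontrivial factor in the product decomposition of $R/J(R)$, or a matrix-unit idempotent inside a single factor of rank at least two, would otherwise survive). Hence $R$ is local.

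Part (b) follows from (a): right Artinian $R$ is semi-primary and thus left perfect, and $R_{R}$ has finite composition length, so ${\rm Soc}(R_{R})$ is a finite direct sum of simples, hence Dedekind-finite. For (c), (b) reduces us to the cases $R$ local or $R\cong M_{n}(D)$. In the local case, writing $J(R)=aR$, a short induction using associativity gives $J(R)^{k}=a^{k}R$. Since $J(R)$ is nilpotent (right Artinian), any $0\neq b\in R$ lies in $J(R)^{n}\setminus J(R)^{n+1}$ for some $n$, so $b=a^{n}u$ with $u\notin J(R)$ a unit, and hence $bR=a^{n}R=J(R)^{n}$. Therefore the right ideals form the chain $R\supset J(R)\supset J(R)^{2}\supset\cdots\supset 0$, and $R$ is right uniserial. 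Conversely, right uniserial rings are trivially right weakly uniserial, while $M_{n}(D)$ as a right module over itself is homogeneous semisimple and hence weakly uniserial by \Cref{homogeneous semisimple}. The main obstacle is the structural step in (a) that upgrades ``$R_{R}$ indecomposable'' to ``$R$ local'', which requires combining Bass's characterization with the lifting of matrix-unit idempotents through the nil radical; once that is in hand, (b) and (c) become bookkeeping.
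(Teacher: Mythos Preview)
Your argument is correct and follows essentially the same strategy as the paper: apply \Cref{N and Socle}(b), use the left perfect hypothesis to rule out ${\rm Soc}(R_{R})=0$, and upgrade ``$R_{R}$ indecomposable'' to ``$R$ local'' via idempotent lifting over the nil radical. The only difference is packaging: the paper cites standard references (Lam's \cite[Theorem 23.20]{Lam1} for ${\rm Soc}(R_{R})\neq 0$, \cite[Corollary 21.29]{Lam1} for the local conclusion, and \cite[Lemma 2.13]{BehQ} for the step from local right Artinian principal right ideal ring to right uniserial in (c)), whereas you spell each of these out directly---your chain argument $J(R)^{k}=a^{k}R$ using that $J(R)$ is two-sided is exactly the content of the cited lemma.
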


\begin{proof}
 (a). Since $R$ is left perfect, by \cite[Theorem 23.20]{Lam1}, ${\rm Soc}(R_{R}) \neq 0$. So by \Cref{N and Socle}(b), if $R\ncong M_{n}(D)$, then $R_{R}$ is indecomposable and so $R$ has no non-trivial idempotents. Therefore by \cite[Corollary 21.29]{Lam1}, $R$ is a local ring.

 (b) follows from (a).

 (c). Assume that $R$ is right weakly uniserial. Since $R$ is right Artinian, ${\rm u.dim}({\rm Soc}(R_{R})) < \infty$ and so ${\rm Soc}(R_{R})$ is Dedekind-finite. Hence by (a) if $R \ncong M_{n}(D)$, then $R$ is a local ring and so by \cite[Lemma 2.13]{BehQ}, $R$ is a right uniserial ring. Conversely,  suppose that $R \cong M_{n}(D)$ where $D$ is a division ring. Then $R_{R}$ is homogeneous semisimple and so by \Cref{Some exam}(1), $R_{R}$ is weakly uniserial. Also clearly right uniserial rings are right weakly uniserial.
\end{proof}

\begin{Examp}
{\rm Let $D$ be a division ring and 
 $R= \left[
         \begin{array}{rr}
              D  & D \\
              0 &  D 
          \end{array} \right]$. Clearly $R$ is a right (and left) Artinian ring. Since 
           $ \left[
          \begin{array}{rr}
              D & D \\
              0 &  0 
          \end{array} \right]$ and 
          $ \left[
          \begin{array}{rr}
              0 & D\\
              0 &  D 
          \end{array} \right]$ are two maximal right (and left) ideals of $R$, then $R$ is not a local ring. Also ${\rm J}(M_{n}(D)) =0$ and ${\rm J}(R)= \left[
          \begin{array}{rr}
              0  & D \\
              0 &  0 
          \end{array} \right] \neq 0$, hence $R \ncong M_{n}(D)$. Therefore by \Cref{right artinian right weakly}(b), $R$ is neither a right nor a left weakly uniserial ring.}
\end{Examp}

A ring $R$ is called \textit{right Kasch} if every simple right $R$-module can be emmbedded in $R_{R}$.  \textit{Left Kasch} ring is defined similarly. As usual, $R$ is called a \textit{Kasch} ring if it is both right and left Kasch.

\begin{Pro}\label{comm. Kasch}
For a ring $R$, the following statements hold:\\
\indent {\rm (a)} If $R$ is a commutative $\left( semi\right)$ prime weakly uniserial ring, then $R$ is a field or ${\rm Soc}(R)=0$. \\
\indent {\rm (b)} Every commutative Kasch weakly uniserial ring is local.\\
\indent {\rm (c)} Every commutative Artinian weakly uniserial ring is local.
\end{Pro}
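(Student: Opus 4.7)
For part (a), my plan is to show that a commutative semiprime weakly uniserial ring with nonzero socle must be a field. Assume $R$ is commutative, semiprime, weakly uniserial, and ${\rm Soc}(R)\neq 0$. Pick a minimal ideal $S\subseteq R$. Since $R$ is semiprime, $S^{2}\neq 0$, so there is some $s\in S$ with $sS\neq 0$; because $S$ is simple, this forces $sS=S$, so there is $e\in S$ with $se=s$. Then $e^{2}-e\in S$ satisfies $(e^{2}-e)s=0$, i.e.\ $e^{2}-e\in S\cap {\rm Ann}_{R}(s)$, and since ${\rm Ann}_{R}(s)\cap S$ is a submodule of $S$ not equal to $S$ (as $sS=S\neq 0$), it must be $0$. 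Hence $e$ is a nonzero idempotent, and because $R$ is commutative, $e$ is central. By \Cref{central idempotent}(a), $e=1$, so $S=R$ and $R$ is a simple commutative ring, hence a field. The prime case is a special instance.

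For part (b), assume $R$ is commutative, Kasch, and weakly uniserial, and suppose for contradiction that $\mathcal{M}_{1}$ and $\mathcal{M}_{2}$ are two distinct maximal ideals. The Kasch hypothesis gives embeddings $R/\mathcal{M}_{i}\rightarrowtail R$, i.e.\ there are simple ideals $S_{1},S_{2}\subseteq R$ with $S_{i}\cong R/\mathcal{M}_{i}$. Since $R$ is weakly uniserial, either $S_{1}\rightarrowtail S_{2}$ or $S_{2}\rightarrowtail S_{1}$, and in either direction the simplicity of both forces $S_{1}\cong S_{2}$. But then $\mathcal{M}_{1}={\rm Ann}_{R}(R/\mathcal{M}_{1})={\rm Ann}_{R}(S_{1})={\rm Ann}_{R}(S_{2})={\rm Ann}_{R}(R/\mathcal{M}_{2})=\mathcal{M}_{2}$, a contradiction. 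Hence $R$ is local.

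For part (c), I would reduce to part (b): a commutative Artinian ring is Kasch (every simple $R/\mathcal{M}$ embeds in $R$ because commutative Artinian rings decompose into local Artinian pieces whose socles contain a copy of the residue field), so (b) applies directly. A cleaner route, avoiding the Kasch verification, is to use the structure theorem: any commutative Artinian $R$ decomposes as $R\cong\prod_{i=1}^{n} R_{i}$ with each $R_{i}$ local Artinian; if $n\geq 2$, then \Cref{central idempotent}(b) rules out $R$ being weakly uniserial, so $n=1$ and $R$ is local.

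The main obstacle is the idempotent construction in part (a); the key subtlety is verifying that $e^{2}-e$ really lies in $S\cap {\rm Ann}_{R}(s)=0$, which requires the simplicity of $S$ together with semiprimeness (to force $sS=S$). Parts (b) and (c) are then short corollaries that use only the weakly uniserial property plus, respectively, the Kasch hypothesis and the structure theorem for commutative Artinian rings together with \Cref{central idempotent}.
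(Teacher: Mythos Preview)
Your proof is correct, and parts (a) and (b) are essentially the paper's argument: for (a) the paper cites Brauer's Lemma \cite[10.22]{Lam1} to produce the idempotent generator of a minimal ideal, whereas you construct the idempotent by hand (your argument is in effect a proof of Brauer's Lemma in the commutative case), and both then invoke \Cref{central idempotent}(a) to force $e=1$; part (b) is verbatim the paper's proof.

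For part (c) you take a different route. The paper appeals to \Cref{right artinian right weakly}(b), which says a right Artinian right weakly uniserial ring is either local or isomorphic to $M_{n}(D)$; commutativity then rules out $M_{n}(D)$ with $n>1$. Your two alternatives---reducing to (b) via the fact that commutative Artinian rings are Kasch, or using the decomposition $R\cong\prod_{i=1}^{n}R_{i}$ into local Artinian factors together with \Cref{central idempotent}(b)---are both valid and more self-contained, since they avoid the chain of results (\Cref{N and Socle}, \Cref{right artinian right weakly}) sitting behind the paper's citation. The paper's route has the advantage of working in the noncommutative Artinian setting as well, but for the commutative statement at hand your structure-theorem argument is the cleanest.
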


\begin{proof}
 (a). Assume that ${\rm Soc}(R) \neq 0$. Then there exists a nonzero minimal ideal $I$ of $R$. By \cite[Brauer's Lemma 10.22]{Lam1}, $I^{2}= 0$ or $I=eR$, for some idempotent $e \in R$. If $I^{2}= 0$, then since $R$ is semiprime we conclude that $I=0$, a contradiction. Hence $I=eR$, but since $R$ is weakly uniserial,  we have $e=0$ or $e=1$. If $e=0$, then $I=0$, a contradiction. Thus $e=1$, and so ${\rm Soc}(R)=R$. Now by \Cref{homogeneous semisimple}, $R_{R}$ is homogeneous semisimple and so $R \cong M_{n}(D)$ where $D$ is a division ring. Now, since $R$ is commutative we conclude that $R$ is a field.

 (b). Suppose that $R$ is a commutative weakly uniserial Kasch ring and $\M_{1}, \M_{2}$ are maximal ideals of it. Since $R$ is Kasch, $R/ \M_{1}$ and $R/ \M_{2}$ are embedded in $R$ and since $R$ is weakly uniserial,  $R/ \M_{1}\rightarrowtail R/ \M_{2}$ or $R/ \M_{2}\rightarrowtail R/ \M_{1}$. In both cases we have $R/ \M_{1} \cong R/ \M_{2}$ and so  $\M_{1} = {\rm Ann}_{R}(R/ \M_{1}) = {\rm Ann}_{R}(R/ \M_{2}) =\M_{2} $, as desired.

 (c) follows from \Cref{right artinian right weakly}(b).
\end{proof}

Let $N$ be a submodule of a right $R$-module $M$. A submodule $C \subseteq M$ is said to be a complement to $N$ (in $M$) if $C$ is maximal with respect to the property that $C \cap N = 0$. A submodule $C$ of a right $R$-module $M$ is called a complement (in $M$), if there exists a submodule $N$ of $M$ such that $C$ is a complement to $N$ (in $M$). 
A right $R$-module $M$ is called \textit{CS}, if every complement in $M$ is a direct summand of $M$. Also $M$ is called \textit{cocyclic} if it has an essential simple submodule. Note that CS modules are not necessarily weakly uniserial. For instance  $\mathbb{Q}_{\mathbb{Z}}$  is a CS module that is not weakly uniserial (see \Cref{Q not weak}(a)).

\begin{Pro}\label{CS}
If $M$ is a weakly uniserial CS right $R$-module such that ${\rm Soc}(M)$ is Dedekind-finite, then ${\rm Soc}(M)=0$ or $M$ is cocyclic or $M$ is homogeneous semisimple.
\end{Pro}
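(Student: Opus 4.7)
The plan is to leverage Theorem \ref{N and Socle}(a), which under the hypothesis that $M$ is weakly uniserial with ${\rm Soc}(M)$ Dedekind-finite already gives the trichotomy: ${\rm Soc}(M)=0$, or $M$ is homogeneous semisimple, or $M$ is indecomposable. The first two cases yield the conclusion immediately, so the real work is to show that in the indecomposable case the CS hypothesis forces $M$ to be cocyclic.

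So I assume ${\rm Soc}(M)\neq 0$ and $M$ is indecomposable. By Proposition \ref{essential socle}(a), ${\rm Soc}(M)$ is homogeneous semisimple and essential in $M$. To prove $M$ is cocyclic it suffices to exhibit a simple submodule of $M$ that is essential in $M$. I would pick any simple submodule $S\subseteq {\rm Soc}(M)$ (which exists because ${\rm Soc}(M)\neq 0$) and choose, via Zorn's lemma, a complement $C$ to $S$ in $M$, so that $S\cap C=0$ and $S\oplus C\subseteq_e M$.

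Now I invoke the CS hypothesis: the complement submodule $C$ is a direct summand of $M$, say $M=C\oplus C'$. Since $M$ is indecomposable, either $C=0$ or $C'=0$. The case $C'=0$ gives $M=C$, which forces $S=S\cap M=S\cap C=0$, contradicting $S\neq 0$. Hence $C=0$, which means $S$ itself is essential in $M$. Consequently $M$ has an essential simple submodule, i.e. $M$ is cocyclic, as required.

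I do not anticipate a real obstacle here; the argument is essentially an assembly of earlier results (Theorem \ref{N and Socle}(a) and Proposition \ref{essential socle}(a)) together with the standard complement-in-CS-module trick combined with indecomposability. The only point to be careful about is verifying that $C$ is a genuine complement submodule (so that the CS hypothesis applies) and that the indecomposability dichotomy rules out the trivial case $C'=0$ by using $S\neq 0$.
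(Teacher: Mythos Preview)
Your proof is correct, but it follows a somewhat different route from the paper's. You invoke the trichotomy conclusion of Theorem~\ref{N and Socle}(a) at the outset and then, in the indecomposable case, take a complement $C$ \emph{of} a chosen simple submodule $S$; the CS hypothesis makes $C$ a summand, indecomposability forces $C=0$, hence $S$ is essential and $M$ is cocyclic. The paper instead splits according to whether ${\rm Soc}(M)$ is simple (immediately cocyclic) or has at least two simple summands; in the latter case it uses the comparability part of Theorem~\ref{N and Socle}(a) to show that each simple $S_t\subseteq{\rm Soc}(M)$ is itself a complement (to $\bigoplus_{i\neq t}S_i$), hence a direct summand by CS, so $M$ is decomposable and Theorem~\ref{N and Socle}(a) then yields homogeneous semisimplicity. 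Your argument is shorter because it consumes the trichotomy of Theorem~\ref{N and Socle}(a) directly and relies on the standard Zorn-type complement, bypassing the verification that a simple socle summand is closed in $M$; the paper's route, on the other hand, extracts the extra information that when ${\rm Soc}(M)$ is not simple every simple submodule of $M$ is already a direct summand.
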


\begin{proof}
Assume that ${\rm Soc}(M) \neq 0$. If ${\rm Soc}(M)=S$ where $S$ is a simple submodule of $M$, then by \Cref{essential socle}(a), $S\subseteq_{e} M$ and so $M$ is cocyclic. Now suppose that ${\rm Soc}(M)=\oplus_{I} S_{i}$, where $|I| \geq 2$ and every $S_{i}$ is a simple submodule of $M$. First, we show that every $S_{i}$ is a complement in $M$. Let $t \in I $ and $N$ be a submodule of $M$ such that $S_{t} \subseteq N$ and $ N \cap (\oplus_{t \neq i \in I} S_{i})= 0 $. Since ${\rm Soc}(M) \nsubseteq N $, by \Cref{N and Socle}(a), we conclude that $N \subseteq {\rm Soc}(M)$. Suppose that $x \in N$. Then $ x = x_t+ \sum_{j=1}^{n} x_{{i}_j}$, where $x_{{i}_j} \in S_{{i}_j}$, $x_t\in S_t$ and $ {i}_j \in I\setminus \{t\}$, for $j=1, \ldots, n$. Thus $x-x_{t}= \sum_{j=1}^{n} x_{{i}_j}$ and so  $x=x_{t}$. It  follows that $N=S_{t}$ and hence $S_{t}$ is a complement in $M$. Since $M$ is a CS module, there exists a nonzero submodule $L$ of $M$ such that $S_{t} \oplus L = M$. So,  $M$ is decomposable and ${\rm Soc}(M) \neq 0$. Therefore, \Cref{N and Socle}(a) implies that $M$ is a homogeneous semisimple right $R$-module.
\end{proof}

\begin{Cor}
Every weakly uniserial uniform right $R$-module with nonzero socle is cocyclic.
\end{Cor}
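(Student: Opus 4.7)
The plan is to deduce this corollary from \Cref{CS} (Proposition 2.18) by verifying its two hypotheses in the uniform setting. First I would observe that a uniform module $M$ is automatically CS: if $C \subseteq M$ is a complement to some submodule $N$, then $C \cap N = 0$, so uniformity forces $C = 0$ or $N = 0$, and in either case $C$ is a direct summand of $M$ (namely $0$ or $M$ itself). Hence the CS hypothesis of \Cref{CS} is free.

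Next I would check that ${\rm Soc}(M)$ is Dedekind-finite by showing more — namely that ${\rm Soc}(M)$ is simple. If $S_1, S_2$ are two nonzero simple submodules of $M$, then by uniformity $S_1 \cap S_2 \neq 0$, and simplicity of each $S_i$ forces $S_1 = S_2$. Thus $M$ contains at most one simple submodule, and since ${\rm Soc}(M) \neq 0$ by hypothesis, ${\rm Soc}(M)$ coincides with this unique simple submodule. A simple module is certainly Dedekind-finite, so the second hypothesis of \Cref{CS} is also satisfied.

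Now \Cref{CS} gives three alternatives: ${\rm Soc}(M) = 0$, $M$ is cocyclic, or $M$ is homogeneous semisimple. The first is ruled out by hypothesis. In the third case $M = \bigoplus_{i \in I} S_i$ with the $S_i$ pairwise isomorphic simples; but uniformity forbids the internal direct sum of two nonzero submodules, so $|I| = 1$ and $M$ itself is simple, which is trivially cocyclic. Either way $M$ is cocyclic. There is no real obstacle here: the only conceptual point is recognizing that uniformity does the work of collapsing the ``homogeneous semisimple'' branch of \Cref{CS} and of providing the CS property for free.
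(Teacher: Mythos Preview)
Your argument is correct and matches the paper's intended route: the corollary is stated without proof immediately after \Cref{CS}, and you verify the two hypotheses (uniform $\Rightarrow$ CS, and ${\rm Soc}(M)$ Dedekind-finite via simplicity) and dispose of the trichotomy exactly as expected. Note, incidentally, that your second paragraph already finishes the job on its own: once you have shown ${\rm Soc}(M)$ is simple, uniformity makes it essential in $M$, so $M$ is cocyclic directly without ever invoking \Cref{CS}.
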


\section{\hspace{-5mm}. Rings over which every module  is  weakly uniserial}

In this section, we characterize  rings over which every (fnitely generated or injective)
module is weakly uniserial. A ring is called  \textit{quasi-Frobenius} (brieﬂy, QF ) if it is right Noetherian and right self-injective. The following example shows that projective and injective modules are not necessarily weakly uniserial.

\begin{Examp} \label{Projective and injective}
{\rm Let $R=\mathbb{Z}_{p^{2}}$ and $M =R \oplus R $, where $p$ is a prime number. Since  $M$ is a projective $R$-module and  $R$ is QF,  $M$  is an injective $R$-module.  Now, $N:=(\overline{1}, \overline{1}) R$ and $K:= pR \oplus pR$ are two $R$-submodules of $M$ that $|N|=|K|$. But since ${\rm Ann}_{R}(N) \neq {\rm Ann}_{R}(K)$, we conclude that $N \not\rightarrowtail  K$ and $K \not\rightarrowtail N $. Therefore, $M$ is not a weakly uniserial $R$-module.}
\end{Examp}

In the following we show that cyclic modules are not necessarily weakly uniserial.

\begin{Examp} \label{Cyclic}
\rm{(1)} As stated in \Cref{central idempotent}(a), a ring with nontrivial central idempotents is not weakly uniserial. For instance consider the ring  $\mathbb{Z}/{6\mathbb{Z}}$. 

\rm{(2)}  {\rm Let $K$ be a field and $R$ be the $K$-algebra with generators $x$ and $y$ such that $x^{3}=y^{3}=xy=0$ (i.e., $R \cong K[x,y] \slash \langle x^{3}, y^{3}, xy \rangle$). $R$ is an Artinian local ring with the maximal ideal $\M=xR \oplus yR$. Also, ${\rm Spec}(R) =\left\lbrace  \M \right\rbrace $. If $ xR \rightarrowtail yR$, then  $ {\rm Ann}_{R}(yR) \subseteq {\rm Ann}_{R}(xR)$ and so  $x^{2}=0$, a contradiction. If $yR \rightarrowtail xR$, then similarly $y^{2}=0$, a contradiction. Therefore  neither $\M$  is  weakly uniserial, nor $R_R$.   }
\end{Examp}

 The following lemma plays a key role in the subsequent results.

\begin{Lem}\label{5 property}
For a ring $R$ that every $2$-generated right $R$-module is weakly uniserial, the following statements hold:\\
\indent {\rm (a)} All simple right $R$-modules are isomorphic.\\
\indent {\rm (b)} $R$ is a right semi-Artinian right Kasch ring.\\
\indent {\rm (c)} $R$ has only one prime ideal.\\
\indent {\rm (d)} All two-sided ideals are comparable.
\end{Lem}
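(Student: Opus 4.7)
The plan is to prove (a), (b), (d) by a common template --- form an appropriate $2$-generated module from the data, apply the hypothesis to obtain an embedding, and unpack --- and then derive (c) by applying (a), (b) to the quotient ring $R/P$.

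For (a), the module $S_{1}\oplus S_{2}$ built from two simples is $2$-generated, hence weakly uniserial, and any embedding between simple modules is an isomorphism. For (b), fix any simple right $R$-module $S$ (which exists since $R$ has a maximal right ideal). For right semi-Artinianity it suffices to show every nonzero cyclic $xR$ has a simple submodule: applying the hypothesis to the $2$-generated module $xR\oplus S$, either $xR\rightarrowtail S$ (so $xR$ itself is simple) or $S\rightarrowtail xR$. For right Kasch, the same trick applied to $R\oplus S$ gives $S\rightarrowtail R$, or else $R\rightarrowtail S$ forces $R\cong S$ and again $S$ sits inside $R$. For (d), the $2$-generated module $R/I\oplus R/J$ is weakly uniserial; an embedding $R/I\rightarrowtail R/J$ yields ${\rm r.Ann}_{R}(R/J)\subseteq{\rm r.Ann}_{R}(R/I)$, which for two-sided $I, J$ unwinds to $J\subseteq I$, since ${\rm r.Ann}_{R}(R/K)=K$ whenever $K$ is two-sided.

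The heart of the lemma is (c). Let $P$ be a prime ideal and set $\bar R := R/P$. Every $2$-generated right $\bar R$-module is also a $2$-generated right $R$-module with the same submodules and embeddings, so $\bar R$ inherits the hypothesis, and (a), (b) applied to $\bar R$ show: all simple right $\bar R$-modules are isomorphic to a single $T$, and $\bar R$ is right semi-Artinian and right Kasch. By Kasch, $T$ embeds in $\bar R$ as a minimal right ideal. Primeness of $\bar R$ gives $T^{2}\neq 0$: otherwise, for $0\neq t\in T$ the inclusion $t\bar R\subseteq T$ would yield $t\bar R t\subseteq T\cdot T=T^{2}=0$, contradicting primeness. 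Now let $I$ be any proper two-sided ideal of $\bar R$; semi-Artinianity produces a simple submodule of $\bar R/I$, which by (a) is $\cong T$, so $I\subseteq{\rm Ann}_{\bar R}(T)$. But the two-sided ideal $\langle T\rangle:=\bar R T\bar R$ is nonzero and
$$\langle T\rangle\cdot{\rm Ann}_{\bar R}(T)\;=\;\bar R\cdot\bigl(T\cdot{\rm Ann}_{\bar R}(T)\bigr)\;=\;0,$$
so primeness forces ${\rm Ann}_{\bar R}(T)=0$. Hence $I=0$, $\bar R$ is simple, and being simple with a minimal right ideal it is simple Artinian. Thus $P$ is a maximal two-sided ideal. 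Finally, applying the template of (d) to $R/P_{1}\oplus R/P_{2}$ shows that any two primes of $R$ are comparable, and as both are maximal they must coincide.

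The main obstacle is recognising, in (c), that primeness combined with the semi-Artinian $+$ Kasch $+$ all-simples-isomorphic structure in $\bar R$ forces the unique simple $T$ to be faithful, via the two-sided identity $\langle T\rangle\cdot{\rm Ann}_{\bar R}(T)=0$. Once $\bar R$ is known to be simple, possession of a minimal right ideal automatically makes it simple Artinian, and the uniqueness of the prime follows in one line.
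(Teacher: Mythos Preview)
Your proofs of (a), (b), and (d) are correct and essentially identical to the paper's: in each case the paper forms the same $2$-generated module ($S_1\oplus S_2$; $(R/\mathcal M)\oplus C$ with $C$ cyclic and then $R\oplus S$; $R/I\oplus R/J$) and reads off the conclusion from a single embedding.

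For (c), your argument is correct but follows a genuinely different route from the paper. The paper's proof is a two-line application of its earlier Proposition~2.7(a): for distinct primes $P_1,P_2$ the module $M=R/P_1\oplus R/P_2$ is $2$-generated and has $\{P_1,P_2\}\subseteq{\rm Ass}(M)$ (since $R/P$ is a prime module with annihilator $P$ whenever $P$ is a prime ideal), contradicting $|{\rm Ass}(M)|\le 1$ for weakly uniserial $M$. Your approach instead pushes the hypothesis down to $\bar R=R/P$, applies parts (a) and (b) there to get a unique faithful simple $T$ embedded in the prime ring $\bar R$, concludes $\bar R$ is simple Artinian, hence every prime is maximal, and finishes with (d). This is self-contained (it does not rely on the associated-primes machinery) but noticeably longer; the paper's route is shorter precisely because it cashes in the earlier result on ${\rm Ass}$ of weakly uniserial modules. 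Incidentally, your observation that $T^2\neq 0$ is not actually used in the remainder of your argument --- the identity $\langle T\rangle\cdot{\rm Ann}_{\bar R}(T)=0$ only needs $\langle T\rangle\neq 0$, which is immediate from $T\neq 0$.
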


\begin{proof}
 (a). Set $M= S_{1} \oplus S_{2}$ where $S_{1}$ and $S_{2}$ are simple right $R$-modules. Since $M$ is weakly uniserial, by \Cref{homogeneous semisimple}, $S_{1} \cong S_{2}$.

 (b). Set $M=(R \slash \M) \oplus C$ where $\M$ is a maximal right ideal of $R$ and $C$ is a cyclic right $R$-module. Since $M$ is weakly uniserial, $R \slash \M \rightarrowtail C$ or $C \rightarrowtail R \slash \M$. In any case ${\rm Soc}(C) \neq 0$ and so every nonzero right $R$-module has a simple submodule. Therefore, $R$ is a right semi-Artinian ring. Also, if we set $M=R \oplus S$ where $S$ is a simple right $R$-module, then since $M$ is weakly uniserial, $R \rightarrowtail S$ or $S \rightarrowtail R$. In any case, $S$ is embedded in $R$ and so $R$ is a right Kasch ring.

 (c). Assume that $P_{1}$ and $P_{2}$ are distinct prime ideals of $R$  and set $M= (R \slash P_{1}) \oplus (R \slash P_{2})$. Then ${\rm Ass}(M)={\rm Ass}(R \slash P_{1}) \cup {\rm Ass}(R \slash P_{2})  = \lbrace P_{1} ,  P_{2} \rbrace$. Thus by \Cref{Ass(M)}(a), $M$ is not weakly uniserial, a contradiction.

 (d). Set $M=R \slash I \oplus R \slash J$ where $I$ and $J$ are two-sided ideals of $R$. Since $M$ is weakly uniserial, $ R \slash I \rightarrowtail R \slash J $ or $ R \slash J \rightarrowtail R \slash I$. Therefore, $J={\rm r.Ann}_{R}(R \slash J) \subseteq {\rm r.Ann}_{R}(R \slash I)=I$ or $I={\rm r.Ann}_{R}(R \slash I) \subseteq {\rm r.Ann}_{R}(R \slash J)=J$.
\end{proof}

\begin{Lem}\label{soc}
 Let $R$ be a ring  that all simple right  $R$-modules are isomorphic. Then
 $R$ is homogeneous  semisimple or  ${\rm Soc}^{2}(R_{R}) =0$.
\end{Lem}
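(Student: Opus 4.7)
The plan is to handle the dichotomy by contrapositive: I assume $S := {\rm Soc}(R_R)$ satisfies $S^2 \neq 0$ and show $R_R$ is semisimple, which combined with the hypothesis that all simple right $R$-modules are isomorphic forces homogeneous semisimplicity.

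\textbf{Step 1: From $S^2 \neq 0$ to an idempotent minimal right ideal.} Since $S$ is a sum of minimal right ideals, $S^2 \neq 0$ produces minimal right ideals $T_1, T_2 \subseteq S$ with $T_1 T_2 \neq 0$. Choose $t_1 \in T_1$ with $t_1 T_2 \neq 0$. Then $t_1 T_2$ is a nonzero right $R$-submodule of the minimal right ideal $T_1$, so $t_1 T_2 = T_1 \ni t_1$, giving $t_1 = t_1 b$ for some $b \in T_2$. If $T_2^2 = 0$, then $b^2 = 0$ and consequently $t_1 = t_1 b = t_1 b^2 = 0$, contradicting $t_1 T_2 \neq 0$. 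Hence $T_2^2 \neq 0$, and Brauer's Lemma \cite[Lemma 10.22]{Lam1} yields an idempotent $f \in R$ with $T_2 = fR$.

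\textbf{Step 2: $RfR = R$ using the uniqueness of the simple module.} For each $r \in R$, the right ideal $rfR$ is the image of the right $R$-module map $fR \to R_R$ given by left multiplication by $r$; since $fR$ is simple, $rfR$ is either $0$ or a minimal right ideal, so $RfR \subseteq S$. Suppose for contradiction $RfR \neq R$ and pick a maximal right ideal $\mathcal{M} \supseteq RfR$. By hypothesis the simple module $R/\mathcal{M}$ is isomorphic to $fR$; fix an isomorphism $\phi : fR \to R/\mathcal{M}$ and write $\phi(f) = x + \mathcal{M}$. Using $f = f^2$ one computes
\[
\phi(f) \;=\; \phi(f \cdot f) \;=\; \phi(f) \cdot f \;=\; xf + \mathcal{M}.
\]
But $RfR$ is a two-sided ideal, so $xf \in Rf \subseteq RfR \subseteq \mathcal{M}$, whence $\phi(f) = 0$. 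This contradicts $\phi$ being an isomorphism together with $f \neq 0$.

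\textbf{Step 3: Conclusion.} From Step 2, $R = RfR \subseteq S$, so $R_R$ is semisimple. Since all simple right $R$-modules are isomorphic, $R$ is homogeneous semisimple, as required. The only genuinely non-routine move is Step 1, where the observation $t_1 = t_1 b$ together with $b^2 = 0$ upgrades the mere nonvanishing of $S^2$ to the existence of a minimal right ideal with nonzero square; after that, Brauer's Lemma plus a standard Kasch-type argument using uniqueness of the simple module do the work.
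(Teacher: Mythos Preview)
Your proof is correct, but it takes a genuinely different route from the paper's. The paper argues via the singular/projective dichotomy for simple modules: since all simples are isomorphic, either every simple right $R$-module is projective (whence $R$ is semisimple by an external reference) or every simple right $R$-module is singular, in which case each simple right ideal $I$ satisfies $I\cdot\mathrm{Soc}(R_R)=0$ (because elements of $I$ have essential right annihilator, which must contain every minimal right ideal), giving $\mathrm{Soc}^2(R_R)=0$. Your argument instead works directly from $S^2\neq 0$: you extract a minimal right ideal with nonzero square, invoke Brauer's Lemma to get an idempotent generator $f$, and then use the uniqueness of the simple module to force $RfR=R$ via the observation that any isomorphism $fR\to R/\mathcal{M}$ would send $f$ to $xf+\mathcal{M}\in\mathcal{M}$. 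The paper's route is shorter but leans on the theory of singular modules and an external theorem; your route is more self-contained, using only Brauer's Lemma and an elementary trace-ideal argument, and it makes transparent exactly where the ``all simples isomorphic'' hypothesis enters.
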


\begin{proof}
We may assume  ${\rm Soc}(R_{R}) \neq 0$. It is well-known that every  simple right  $R$-module is either singular or projective, but not both. By hypothesis,  all  simple right  $R$-modules  are either  singular or projective. If all  simple right  $R$-modules  are  projective, then it is easy to see that $R$ is semisimple and by hypothesis,  $R$ is homogeneous  semisimple (see \cite[p.63 Theorem 1.1]{Chen}).  Thus assume that all  simple right  $R$-modules  are  singular. Then  any simple right ideal $I$ of $R$ is singular and by  \cite[Lemma 7.2]{Lam}, $I ({\rm Soc}(R_{R}))=0$ and hence    ${\rm Soc}^{2}(R_{R}) =0$.  
 \end{proof}

Let $\mu$ be an ordinal and $\mathcal{A}=\left( A_{\alpha}~ | ~\alpha \leq \mu \right)$  be a sequence of modules. $\mathcal{A}$ is called \textit{continuous chain} provided that $A_{0}=0$, $A_{\alpha} \subseteq A_{\alpha +1}$  for all   $\alpha < \mu $ and $A_{\alpha}=\bigcup _{\beta < \alpha} A_{\beta}$ for all limit ordinals $\alpha \leq \mu$.
Let $M$ be a module and $\mathcal{C}$ be a class of modules. $M$ is \textit{$\mathcal{C}$–filtered}, if there are an ordinal $\kappa$ and a continuous chain of modules, $\left( M_{\alpha} ~| ~\alpha \leq \kappa \right) $, consisting of submodules of $M$ such that $M=M_{\kappa}$, and each of the modules $M_{\alpha +1} \slash M_{\alpha}~\left( \alpha < \kappa \right) $ is isomorphic to an element of $\mathcal{C}$. In this case the chain $ \left( M_{\alpha} ~| ~\alpha \leq \kappa \right) $ is called a \textit{$\mathcal{C}$–filtration} of $M$. If $R$ is a ring, $M$ is a semi-Artinian  right $R$-module and  $\mathcal{C}$ is the class of all simple right $R$-modules, then it is easy to see that $M$  is  $\mathcal{C}$–filtered.
Also, a ring $R$ is called \textit{right $V$-ring} if every simple right $R$-module is injective. The following theorem  answers  Question $1$.

\begin{The}\label{main theorem}
For a ring $R$, the following statements are equivalent:\\
\indent {\rm (a)} $R \cong M_{n}(D)$ where $D$ is a division ring.\\
\indent {\rm (b)} Every right $R$-module is weakly uniserial.\\
\indent {\rm (c)} Every finitely generated right $R$-module is weakly uniserial.\\
\indent {\rm (d)} Every $2$-generated right $R$-module is weakly uniserial.\\
\indent {\rm (e)} Every injective right $R$-module is weakly uniserial.\\
\indent {\rm (f)} The left-right symmetric of $(b)$, $(c)$, $(d)$ and $(e)$.
\end{The}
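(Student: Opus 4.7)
I close the loop $(a) \Rightarrow (b) \Rightarrow (c) \Rightarrow (d) \Rightarrow (a)$, with $(e)$ handled via $(b)$, and $(f)$ by left-right symmetry. For $(a) \Rightarrow (b)$, if $R \cong M_n(D)$ then every right $R$-module is semisimple with a unique type of simple summand, hence homogeneous semisimple and weakly uniserial by \Cref{homogeneous semisimple}. The implications $(b) \Rightarrow (c) \Rightarrow (d)$ and $(b) \Rightarrow (e)$ are trivial. For $(e) \Rightarrow (b)$, every right $R$-module $M$ embeds in its injective hull $\mathrm{E}(M)$, which is weakly uniserial by $(e)$, and submodules of weakly uniserial modules are weakly uniserial by \Cref{submodule}(1); hence $M$ itself is weakly uniserial.

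The crux is $(d) \Rightarrow (a)$. By \Cref{5 property}, $R$ is right semi-Artinian, right Kasch, all simple right $R$-modules are isomorphic to a fixed $S$, $R$ has a unique prime ideal $P$, and the two-sided ideals of $R$ form a chain under inclusion. By \Cref{soc}, either $R$ is homogeneous semisimple --- and then $R \cong M_n(D)$ by Wedderburn--Artin --- or $T^2 = 0$ where $T := \mathrm{Soc}(R_R)$. I derive a contradiction from $T^2 = 0$; note $T \neq 0$ by the Kasch property.

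Apply $(d)$ to the $2$-generated module $R \oplus R/I$ for a nonzero two-sided ideal $I$: an embedding $R \hookrightarrow R/I$ would give $I \subseteq \mathrm{Ann}_R(R) = 0$, so the other alternative must hold, $R/I \hookrightarrow R$. Now consider the Loewy series $0 = T_0 \subsetneq T_1 = T \subsetneq T_2 \subsetneq \cdots$ in $R$ (each $T_n$ is a two-sided ideal, being a socle). Each $R/T_n$ inherits $(d)$, so applying \Cref{soc} to $R/T_n$ gives either $R/T_n$ homogeneous semisimple (which forces $T_n = P$, the unique maximum proper two-sided ideal) or $T_{n+1}^2 \subseteq T_n$ in $R$. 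Iterating the latter yields $T_n^{2^n} \subseteq T_0 = 0$, so each finite $T_n$ is nilpotent and therefore contained in $P$. Since $R_R$ is cyclic, the Loewy length of $R_R$ must be a successor ordinal $\gamma + 1$ (at a limit $\lambda$, $1 \in \bigcup_{\beta < \lambda} T_\beta$ would force $1 \in T_\beta$ for some $\beta < \lambda$, contradicting minimality). For finite $\gamma$, $T_\gamma = P$ is then nilpotent and prime, forcing $P = 0$ and hence $T \subseteq T_\gamma = 0$, contradicting $T \neq 0$.

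The main obstacle will be ruling out infinite Loewy length $\gamma \geq \omega$. I plan to combine three ingredients: first, applying $(d)$ to $R \oplus R$ with the submodules $\{(r,r) : r \in R\} \cong R$ and $T \oplus T$ --- where $R \hookrightarrow T \oplus T$ is impossible because every element of $T \oplus T$ has annihilator $\supseteq T \neq 0$ --- forces $T \oplus T \hookrightarrow T$, so $T$ must be an infinite direct sum of copies of $S$; second, since $P = \bigcup_n T_n$ is nil (every element lies in some nilpotent $T_n$), idempotents lift through $R \twoheadrightarrow R/P$, and combined with \Cref{central idempotent}(a) one reduces to the local case $R/P \cong D'$ a division ring; third, building on this local structure, one exhibits an incomparable pair of cyclic submodules in a $2$-generated module of the form $R \oplus R/T_n$ for some $n$, in the spirit of \Cref{Projective and injective}, directly contradicting $(d)$. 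Finally, $(f)$ follows from the right-sided argument applied to the opposite ring $R^{\mathrm{op}}$, since $(a)$ is manifestly left-right symmetric.
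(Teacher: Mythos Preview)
Your handling of $(a)\Rightarrow(b)\Rightarrow(c)\Rightarrow(d)$, $(b)\Leftrightarrow(e)$, and the symmetry $(f)$ is fine. The problem is entirely in the argument for $(d)\Rightarrow(a)$, which has a genuine gap in the finite Loewy length case and is not a proof in the infinite case.

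In the finite case you assert that ``$T_\gamma=P$ is then nilpotent and prime, forcing $P=0$''. This implication is false: a prime ideal can be nilpotent and nonzero. Take $R=k[x]/(x^2)$ for a field $k$; the ideal $(x)$ is the unique prime, it squares to zero, and it is nonzero. Your Loewy analysis applied to this ring gives $T=T_1=(x)$, $T_2=R$, $\gamma=1$, $P=(x)$ nilpotent --- but $P\neq 0$. Of course $R=k[x]/(x^2)$ does \emph{not} satisfy $(d)$ (the module $R\oplus R$ already fails, by the same mechanism as in \Cref{Projective and injective}), so a correct proof must invoke $(d)$ more substantially at this point; yours does not. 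The earlier observation that $R/I\hookrightarrow R$ for every nonzero two-sided ideal $I$ is not used again and does not by itself rescue the step.

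For infinite Loewy length you only outline a strategy: you obtain $T\oplus T\hookrightarrow T$ (that step is correct), then claim a reduction to $R/P$ a division ring via idempotent lifting and \Cref{central idempotent}(a). But \Cref{central idempotent}(a) only excludes nontrivial \emph{central} idempotents, while the matrix idempotents in $R/P\cong M_m(D')$ need not lift to central idempotents of $R$, so this reduction is unjustified. The final step, ``one exhibits an incomparable pair of cyclic submodules in $R\oplus R/T_n$ \ldots\ in the spirit of \Cref{Projective and injective}'', is a hope rather than an argument: no such pair is produced, and there is no reason to expect the finite $\mathbb{Z}_{p^2}$ picture to transfer to an arbitrary semi-Artinian ring of infinite Loewy length.

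For comparison, the paper's route is quite different and avoids Loewy-length casework entirely: it shows directly that every short exact sequence $0\to S\to X\to S\to 0$ splits (otherwise $X$ is a uniserial cyclic module of length $2$ and the $2$-generated module $X\oplus S$ is not weakly uniserial), hence ${\rm Ext}^1_R(S,S)=0$; since $R$ is right semi-Artinian every module is $\{S\}$-filtered, so Eklof's Lemma gives ${\rm Ext}^1_R(M,S)=0$ for all $M$, i.e.\ $S$ is injective; then $R$ is a right $V$-ring with ${\rm Soc}(R_R)\neq 0$, and \Cref{soc} finishes. The decisive idea you are missing is this $\mathrm{Ext}$ computation.
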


\begin{proof}
 (a) $\Rightarrow$ (b) $\Rightarrow$ (c) $\Rightarrow$ (d) are  clear.\\
 (d) $\Rightarrow$ (a). Assume that every  $2$-generated right $R$-module is weakly uniserial. By \Cref{5 property}(a), there is only one (up to isomorphic) simple right $R$-module, say $S$. We claim that every short exact sequence of the form 
$ 0 \longrightarrow S \overset{f} \longrightarrow X  \overset{g} \longrightarrow S \longrightarrow 0 $
splits where $X$ is a right  $R$-module. For if not, it is shown that $X$ is a cyclic module and $X \oplus S$ is not weakly uniserial, which  contradicts (d). First we show that $X$ is a  module of length  $2$. 
By exactness of  above short exact sequence, we have  $Imf$  both a maximal and minimal submodule of $X$.
 Then $0 \subsetneq Imf \subsetneq X$ is a composition series for $X$ and so  $l(X)=2$ (the length of $X$). \\
Suppose that $X_{1}$ and $X_{2}$ are nonzero proper submodules of $X$. We show that $X_{1} \cap X_{2} \neq 0 $. For if not,  $X_{1} \cap X_{2} = 0 $ and so $0 \subsetneq X_{1} \subsetneq X_{1} \oplus X_{2} \subseteq X$.  Since  $l(X)=2$, we have  $X= X_{1} \oplus X_{2} $ and  there exists a maximal submodule $T$ of $X$ such that $0 \subsetneq X_{1} \subseteq T \subsetneq X$. Hence $T= X_{1}$ and $X \slash X_{1}  \cong X_{2}$ implies that $X_{2}$ is a simple submodule of $X$ and so $X_{2} \cong S$. Similarly, $X_{1} \cong S$  and so $X= X_{1} \oplus X_{2} \cong S \oplus S$, a contradiction.
Thus $X_{1} \cap X_{2} \neq 0 $ and we have $0 \subsetneq X_{1} \cap X_{2} \subseteq X_{1}  \subsetneq X$. But since $l(X)=2$,  $X_{1} \cap X_{2} = X_{1}$ and so $ X_{1} \subseteq  X_{2}$. Similarly, $ X_{2} \subseteq  X_{1}$ and therefore $ X_{1} = X_{2}$. This clearly shows that X is cyclic.\\
Now to  show that $X \oplus S$ is not weakly uniserial, we consider two submodules $X$ and $N \oplus S$  of $X \oplus S$ where $N$ is a maximal (minimal) submodule of $X$ and  proceed by cases:\\
\textbf{ Case 1}:   $N \oplus S \overset{\varphi} \rightarrowtail X$. Then  $N \oplus S \cong \varphi(N \oplus S)= \varphi(N) \oplus \varphi(S) \subseteq X$. Hence $0 \subsetneq \varphi(S) \subsetneq \varphi(N) \oplus \varphi(S) \subseteq X$ and so we have $\varphi(N) \oplus \varphi(S) = X$, since $l(X)=2$. Therefore $X=\varphi(N) \oplus \varphi(S) \cong S \oplus S $, a contradiction.\\
\textbf{Case 2}:   $X \overset{\psi} \rightarrowtail N \oplus S$. Then $X \cong \psi(X) \subseteq N \oplus S $. But $l(N)=1$ and so $N \cong S$. Thus $X$ is a semisimple  module of length $2$, i.e., $X \cong S \oplus S$, a contradiction. \\
Therefore, every short exact sequence of the form 
 $0 \longrightarrow S \longrightarrow X  \longrightarrow S \longrightarrow 0$ splits. This implies that ${\rm Ext}_{R}^{1}(S,S)=0$. Now assume that $M$ is a right $R$-module. By  \Cref{5 property}(b), $M$ is  semi-Artinian  and so it has an $S$-filtration, that is, there exist an ordinal $\kappa$ and an increasing continuous chain of submodules $0=M_{0}\subseteq M_{1} \subseteq M_{2} \subseteq \cdots \subseteq M_{\kappa}=M$ such that 
 $M_{i+1} \slash M_{i} \cong S$ for every $i < \kappa$. Thus ${\rm Ext}_{R}^{1}(M_{i+1} \slash M_{i} ,S)=0$ and so, by Eklof’s Lemma \cite[Lemma 3.1.2]{gobel}, ${\rm Ext}_{R}^{1}(M,S)=0$. Hence, by \cite[Lemma 5.49]{Lam}, $S$ is an injective right $R$-module and so $R$ is a right $V$-ring. Now by  \cite[Theorem 6.2]{Jain}, ${\rm Soc}^{2}(R_{R})={\rm Soc}(R_{R})$.
 Since  ${\rm Soc}(R_{R})\neq 0$, by  \Cref{soc}, $R$ is homogeneous semisimple and hence $R \cong M_{n}(D)$ where $D$ is a division ring.
 
  (d) $\Leftrightarrow$ (e) is clear, since  every module is a submodule of its injective hull.
\end{proof}

\begin{Cor}\label{right duo $2$-generated}
For a right semi-duo ring $R$, the following statements are equivalent:\\
\indent {\rm (a)}  $R$ is a division ring.\\
\indent {\rm (b)}  Every finitely generated free right $R$-module is weakly uniserial and $ {\rm Soc}(R_{R})$ is a minimal \indent\indent   ideal of $R$.\\
\indent {\rm (c)}  $R \oplus R$ is a weakly uniserial right $R$-module and  $ {\rm Soc}(R_{R})$ is a minimal ideal of $R$.\\
\indent {\rm (d)}  Every {\rm (}$2$-generated{\rm )} right $R$-module is weakly uniserial. 
\end{Cor}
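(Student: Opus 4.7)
The plan is to close the cycle $(a) \Rightarrow (b) \Rightarrow (c) \Rightarrow (a)$ and then handle $(a) \Leftrightarrow (d)$ directly via \Cref{main theorem}; the right semi-duo hypothesis will be used only in the step $(d) \Rightarrow (a)$.

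First, $(a) \Rightarrow (b)$ is immediate: over a division ring every finitely generated free module has the form $R^{n}$, which is homogeneous semisimple and hence weakly uniserial by \Cref{homogeneous semisimple}, while ${\rm Soc}(R_R)=R$ is itself a minimal right ideal. The implication $(b) \Rightarrow (c)$ is trivial, since $R \oplus R$ is finitely generated free.

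The substantive step is $(c) \Rightarrow (a)$. Set $S := {\rm Soc}(R_R)$, which by hypothesis is simple. Then ${\rm Soc}(R \oplus R) = S \oplus S$ is semisimple of length two, hence has finite uniform dimension and is in particular Dedekind-finite. Since $R \oplus R$ is weakly uniserial, \Cref{N and Socle}(a) forces every submodule $N \subseteq R \oplus R$ to satisfy either $N \subseteq S \oplus S$ or $S \oplus S \subseteq N$. Applying this to $N := R \oplus 0$, the second alternative would give $0 \oplus S \subseteq R \oplus 0$ and hence $S=0$, contradicting minimality. So $R \oplus 0 \subseteq S \oplus S$, which forces $R_R = S$ to be simple, i.e., $R$ is a division ring.

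Finally, $(a) \Rightarrow (d)$ is the case $n=1$ of $(a) \Rightarrow (b)$ in \Cref{main theorem}. For $(d) \Rightarrow (a)$, \Cref{main theorem} yields $R \cong M_n(D)$ for some division ring $D$; here the semi-duo hypothesis is decisive, since for $n \geq 2$ the ring $M_n(D)$ is simple (so its only proper two-sided ideal is $0$) yet admits nonzero maximal right ideals, and so cannot be right semi-duo. Thus $n=1$ and $R \cong D$ is a division ring. I do not expect any serious obstacle beyond the clean application of \Cref{N and Socle}(a) in $(c) \Rightarrow (a)$, which is the one place where the two hypotheses in $(c)$ genuinely combine.
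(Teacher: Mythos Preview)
Your proof is correct. The implications $(a)\Rightarrow(b)\Rightarrow(c)$, $(a)\Rightarrow(d)$, and $(d)\Rightarrow(a)$ match the paper's approach essentially verbatim (the paper phrases $(d)\Rightarrow(a)$ as ``$R$ is local'' via \Cref{semi-duo}, but this is the same observation you make about $M_n(D)$ failing to be semi-duo for $n\ge 2$).

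The one genuine difference is in $(c)\Rightarrow(a)$. The paper argues directly from the definition: with $I={\rm Soc}(R_R)$ simple, it compares the submodules $(0)\oplus R$ and $I\oplus I$ of $R\oplus R$, rules out $I\oplus I\rightarrowtail R$ (the image would sit inside ${\rm Soc}(R_R)=I$), and from $R\rightarrowtail I\oplus I$ deduces ${\rm r.Ann}_R(I)=0$; then the semi-duo hypothesis is invoked to identify ${\rm r.Ann}_R(I)$ with a maximal right ideal $\mathcal{M}$, forcing $\mathcal{M}=0$. Your route instead applies \Cref{N and Socle}(a) to $M=R\oplus R$ with ${\rm Soc}(M)=S\oplus S$ Dedekind-finite, and tests the submodule $R\oplus 0$ against the dichotomy there. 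This is cleaner and, notably, does \emph{not} use the semi-duo hypothesis at all in this step---so your argument actually shows that $(c)\Rightarrow(a)$ holds for arbitrary rings, which is a slight strengthening of what the paper proves. The paper's direct approach has the minor virtue of being self-contained from the definition, but your use of \Cref{N and Socle}(a) is exactly what that theorem is designed for.
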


\begin{proof} 
 (a) $\Rightarrow$ (b) $\Rightarrow$ (c) and (a) $\Rightarrow$ (d)  are clear.

(c) $\Rightarrow$ (a). Let $K=(0) \oplus R$ and $N=I \oplus I$ be submodules of  $R \oplus R$ where $I$ is the minimal right ideal of $R$ (i.e., $I={\rm Soc}(R_{R})$). Since $R \oplus R$ is weakly uniserial, $N \rightarrowtail K$ or $K \rightarrowtail N$. In the first case, $I \oplus I \overset{f} \rightarrowtail R$ and so $f(I) \oplus f(I) = f(I \oplus I) =f({\rm Soc}(I \oplus I)) \subseteq {\rm Soc}(R_{R})$, a contradiction. Thus, $ R \rightarrowtail I \oplus I $ and so ${\rm r.Ann}_{R}(I)={\rm r.Ann}_{R}(I) \cap {\rm r.Ann}_{R}(I)= {\rm r.Ann}_{R}(I \oplus I) \subseteq {\rm r.Ann}_{R}(R)=0$. But $I \cong R \slash \M$ where $\M$ is a maximal right ideal of $R$ and since $R$ is  right semi-duo, $\M={\rm r.Ann}_{R}(R \slash \M)={\rm r.Ann}_{R}(I)= 0$. This implies that $R$ is a division ring.

(d) $\Rightarrow$ (a).  Assume that every {\rm (}$2$-generated{\rm )} right $R$-module is weakly uniserial. By  \Cref{main theorem}, $R \cong M_{n}(D)$ where $D$ is a division ring. On the other hand since $R$ is right semi-duo, similar to the proof of \Cref{semi-duo}, $R$ is local. Therefore, $R$ is a division ring.
\end{proof}

Recall that a ring $R$ is called right $p.p$-ring if each principal right ideal of $R$ is projective.

\begin{Pro}\label{main theorem2}
 For a ring $R$, the following statements are equivalent:\\
\indent{\rm (a)} $R \cong M_{n}(D)$ where $D$ is a division ring.\\
\indent{\rm (b)} $R$ is a right $p.p$-ring  such that  $R \oplus S$ is a weakly uniserial right $R$-module for any simple \indent\indent right $R$-module $S$.\\
\indent{\rm (c)}  $ {\rm Soc}(R_{R})$ is nonzero  Dedekind-finite and $R \oplus R$ is a weakly uniserial right  $R$-module.
\end{Pro}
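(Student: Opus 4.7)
The plan is to prove the cycle $(a) \Rightarrow (b)$, $(a) \Rightarrow (c)$, $(c) \Rightarrow (a)$, and $(b) \Rightarrow (a)$. The first two are immediate consequences of \Cref{main theorem}: if $R \cong M_n(D)$ then $R_R$ is semisimple (so every right ideal is a direct summand, making $R$ a right $p.p$-ring), every right $R$-module is weakly uniserial, and ${\rm Soc}(R_R) = R_R$ has finite length and is therefore Dedekind-finite.

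For $(c) \Rightarrow (a)$, I would observe that $R_R$ is weakly uniserial as a submodule of $R \oplus R$. Consider the submodules $N_1 = R \oplus 0$ and $N_2 = {\rm Soc}(R_R) \oplus {\rm Soc}(R_R)$ of $R \oplus R$. Weakly uniseriality forces $N_1 \rightarrowtail N_2$ or $N_2 \rightarrowtail N_1$. In the latter case, since any semisimple submodule of $R$ lies in ${\rm Soc}(R_R)$, one gets an embedding ${\rm Soc}(R_R) \oplus {\rm Soc}(R_R) \rightarrowtail {\rm Soc}(R_R)$; splitting off the image inside the semisimple module ${\rm Soc}(R_R)$ yields ${\rm Soc}(R_R) \cong {\rm Soc}(R_R) \oplus ({\rm Soc}(R_R) \oplus T)$ for some semisimple $T$, and Dedekind-finiteness forces ${\rm Soc}(R_R) = 0$, contradicting (c). Hence $N_1 \rightarrowtail N_2$, so $R$ embeds into a semisimple module and is itself semisimple. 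Being cyclic, $R_R$ is a finite direct sum of simples; combining this with weakly uniseriality of $R_R$ and \Cref{homogeneous semisimple}, one has $R_R \cong S^n$ for some simple $S$, and the Schur computation $R \cong {\rm End}_R(R_R) \cong M_n({\rm End}_R(S)) \cong M_n(D)$ with $D = {\rm End}_R(S)$ finishes the implication.

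For $(b) \Rightarrow (a)$, if $R_R$ is simple then $R$ is a division ring and $R \cong M_1(D)$. Otherwise, for every simple right $R$-module $S$ the weakly uniserial condition on $R \oplus S$ applied to $R \oplus 0$ and $0 \oplus S$ forces $S \rightarrowtail R$, so $R$ is right Kasch. Since $R$ is right $p.p$, each simple submodule of $R$ is principal hence projective, so every simple right $R$-module is projective; and $R_R$ is weakly uniserial as a submodule of $R \oplus S$, so any two simple submodules of $R$ embed in one another and are therefore isomorphic, whence all simple right $R$-modules are isomorphic to a fixed $S$. The crux is showing $R_R = {\rm Soc}(R_R)$: otherwise the nonzero cyclic module $R/{\rm Soc}(R_R)$ has a simple quotient $\cong S$, lifting to a maximal right ideal $\mathcal{M} \supseteq {\rm Soc}(R_R)$ with $R/\mathcal{M} \cong S$; projectivity of $S$ gives a splitting $R = \mathcal{M} \oplus S'$ with $S' \cong S$, and then $S' \subseteq {\rm Soc}(R_R) \subseteq \mathcal{M}$ together with $S' \cap \mathcal{M} = 0$ forces the contradiction $S' = 0$. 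Hence $R$ is semisimple, and the final steps of the previous paragraph yield $R \cong M_n(D)$.

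The main obstacle in both nontrivial directions is converting the weakly uniserial hypothesis (a statement about pairs of submodules) into a global semisimplicity statement for $R_R$. In $(c)$ the Dedekind-finiteness of ${\rm Soc}(R_R)$ blocks the ``wrong'' embedding direction in the $R$ versus ${\rm Soc} \oplus {\rm Soc}$ comparison; in $(b)$ the right $p.p$-condition plays the analogous role by supplying the projectivity needed to split the maximal right ideal $\mathcal{M}$ and produce the contradiction that forces $R_R$ to coincide with its socle.
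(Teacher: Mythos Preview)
Your proof is correct. The implications $(a)\Rightarrow(b)$, $(a)\Rightarrow(c)$ and $(b)\Rightarrow(a)$ match the paper's treatment; in particular, your argument that ``every simple projective'' forces $R={\rm Soc}(R_R)$ (via a maximal right ideal $\mathcal M\supseteq{\rm Soc}(R_R)$ splitting off a simple summand $S'\subseteq{\rm Soc}(R_R)\cap\mathcal M=0$) is exactly the content of the paper's phrase ``it is easy to see that $R$ is a semisimple ring,'' just spelled out.

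The implication $(c)\Rightarrow(a)$ is where you genuinely diverge. The paper works with a single minimal right ideal $I$ and iterates: from $I\oplus I\rightarrowtail R$ it finds $I_1\oplus I_1\subseteq R$, then compares $R$ with $I_1\oplus I_1\oplus I_1$, and so on, producing (if $R$ is never seen to be semisimple) an infinite direct sum of simples inside ${\rm Soc}(R_R)$, which contradicts Dedekind-finiteness via the shift isomorphism $\bigoplus_{i\ge1}I_i\cong\bigoplus_{i\ge2}I_i$. You instead compare $R$ with ${\rm Soc}(R_R)\oplus{\rm Soc}(R_R)$ in a single step: an embedding ${\rm Soc}\oplus{\rm Soc}\rightarrowtail R$ lands in ${\rm Soc}$, splits there, and Dedekind-finiteness of ${\rm Soc}$ kills ${\rm Soc}$ outright. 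Your route is shorter and avoids the inductive bookkeeping; the paper's route has the minor advantage of only ever invoking submodules that visibly live inside $R\oplus R$ at each stage, but both reach semisimplicity of $R$ and then finish identically via \Cref{homogeneous semisimple}.
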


\begin{proof}
(a) $\Rightarrow$ (b) is clear.

(b) $\Rightarrow$ (a).  Assume that $R$ is a right $p.p$-ring and  $ S$ is a  simple right  $R$-module. By hypothesis,  $R \rightarrowtail S$ or $S \rightarrowtail R$. In the first case $R$ is a simple right $R$-module and we are done. If  $S \overset{f} \rightarrowtail R $, then $S \cong f(S) \subseteq R$ and so $f(S)$ is a principal right ideal of $R$ and $S$ must be  projective. Now, it is easy to see that  $R$ is a semisimple ring and since it is  right weakly uniserial, \Cref{homogeneous semisimple} implies that $R \cong M_{n}(D)$ where $D$ is a division ring.
 
(a) $\Rightarrow$ (c). Follows from \Cref{main theorem}.

(c) $\Rightarrow$ (a). Let  $R$ be a ring such that $ {\rm Soc}(R_{R})$ is nonzero Dedekind-finite and set $M=R \oplus R$. We  consider two submodules $(0) \oplus R $ and $I \oplus I$ of $M$, where $I$ is a minimal right ideal of $R$. Since $M$ is weakly uniserial, $ (0) \oplus R \rightarrowtail I \oplus I $ or $ I \oplus I \rightarrowtail (0) \oplus R$. In the first case $R$ is semisimple. In the latter case $ I \oplus I  \overset{f} \rightarrowtail R$ and so $ I \oplus I \cong f(I \oplus I)= f(I) \oplus f(I) \subseteq R$. Since $f$ is monomorphism,  $f(I)$ is a minimal right ideal of $R$. Set $I_{1}=f(I)$ and again since $M$ is weakly uniserial, $ (0) \oplus R  \rightarrowtail (I_{1} \oplus I_{1}) \oplus I_{1}$ or $(I_{1} \oplus I_{1} )\oplus I_{1} \rightarrowtail (0) \oplus R$. In the first case $R$ is  semisimple. In the second case $ (I_{1} \oplus I_{1}) \oplus I_{1} \overset{f} \rightarrowtail R$ and so $(I_{1} \oplus I_{1}) \oplus I_{1} \cong f(I_{1} \oplus I_{1} \oplus I_{1}) =  f(I_{1}) \oplus f(I_{1}) \oplus f(I_{1}) \subseteq R$. If we set $I_{2} = f(I_{1})$,  then $R$ contains $I_{2} \oplus I_{2} \oplus I_{2}$, where $I_{2}$ is a minimal right ideal of $R$. By continuing this process if $R$ is not semisimple, then $\oplus_{i =1}^{\infty} I_{i} \subseteq {\rm Soc}(R_{R})$ where every $I_{i}$ is a minimal right ideal of $R$ and $I_{i} = I_{j}$ for every $ i, j \geq 1$. Since $\oplus_{i =1}^{\infty} I_{i} \cong \oplus_{i =2}^{\infty} I_{i} \oplus I_{1}$ and $\oplus_{i =1}^{\infty} I_{i} \cong \oplus_{i =2}^{\infty} I_{i}$, we conclude that $\oplus_{i =1}^{\infty} I_{i}$ is not Dedekind-finite. On the other hand it is easy to see that every direct summand of Dedekind-finite modules is also Dedekind-finite. Now since ${\rm Soc}(R_{R})$ is semisimple and $\oplus_{i =1}^{\infty} I_{i}$ is a direct summand of it, we conclude that $\oplus_{i =1}^{\infty} I_{i}$ is Dedekind-finite, a contradiction. Therefore,  $R$ is a semisimple ring and since $R$ is right weakly uniserial, by \Cref{homogeneous semisimple},  $R \cong M_{n}(D)$ where $D$ is a division ring.
\end{proof}

It is easy to see that every cyclic right $R$-module is uniserial if and only if $R$ is a right uniserial ring. But this is not necessarily true for weakly uniserial modules. For example $\mathbb{Z}$ is a weakly uniserial ring, but $\mathbb{Z}_{6}$ is a cyclic $\mathbb{Z}$-module that is not weakly uniserial. This fact and above results motivate the following question:

\begin{Ques}
Which rings $R$ have the property that every cyclic right $R$-module is weakly uniserial?
\end{Ques}

\section{\hspace{-6mm}. Torsion-free abelian groups of rank 1 which are weakly uniserial}

In this section, we answer  Question $2$.  In fact, it is determined  which torsion-free abelian groups of rank 1 are weakly uniserial. For this purpose, we need some definitions and results that will be presented below. A \textit{height sequence} $(\alpha_{p})_{p \in \pi}$   is a sequence of non-negative  integers  together with $\infty$, indexed by the elements of $\pi$ (the set of primes of  $\mathbb{Z}$). Let $A$ be a torsion-free  abelian group, $a \in A$  and $p$ is a prime number. Recall that  \textit{$p$-height}  $a$ in $A$, denoted by $h_{p}(a)$, is a non-negative integer $n$ with $a \in p^{n}A \backslash p^{n+1}A$ and $\infty$ if no such $n$ exists. Also   $h_{A}(a) $ (the height sequence  $a$ in $A$)  is the height sequence $(h_{p}(a))_{p \in \pi}$.
For simplicity, we write  $(\alpha_{p})$  instead of  the  height sequence  $(\alpha_{p})_{p \in \pi}$.  
Two height sequences $(\alpha_{p})$ and $(\beta_{p})$ are \textit{equivalent} if $\alpha_{p} = \beta_{p}$ for all but a finite number $p$ and $\alpha_{p} = \beta_{p}$ if either $\alpha_{p}= \infty$ or $\beta_{p}= \infty$. It is easy to see that  this relation is an equivalence relation. An equivalence class $\tau$ of height sequences is called a type, written $\tau=[ \alpha ] $ for some height sequence $\alpha$. Now if $A$ is a torsion-free  abelian group and $a \in A$, then we define the type  $a$ in $A$, to be ${\rm type}_{A}(a)=[h_{A}(a) ] $. If any two nonzero elements of  $A$ have the same type, then the common value being denoted by ${\rm type}(A)$. In this case $A$ is called \textit{homogeneous}, (for more details see \cite{Ar}).

\begin{Rem}\label{homogeneous} 
 \rm{ Every subgroup of  $ \mathbb{Q}$ is  homogeneous. To see this, let $A\leq \mathbb{Q}$ and  $a, b\in A$. Then there exist $m,n\in \mathbb{Z}$ such that $ma=nb$. It is easy to see that   $h_{p}(ma)= h_{p}(a)+  h_{p}(m)$ and  $h_{p}(nb)= h_{p}(b)+  h_{p}(n)$.  On the other hand,  $h_{p}(m)= h_{p}(n)=0$ for all but a finite number $p\in  \pi$.  Thus   ${\rm type}_{A}(a)=  [h_{A}(a) ]=[(h_{p}(a))_p]= [(h_{p}(a)+h_{p}(m))_{p}]= [(h_{p}(ma))_{p}]= [(h_{p}(nb))_{p}]= [(h_{p}(b)+h_{p}(n))_{p }]=[(h_{p}(b))_{p}]=[h_{A}(b) ]= {\rm type}_{A}(b)$.}

 \end{Rem}

\begin{Examp}
{\rm (1) ${\rm type}(\mathbb{Z}) = [h_{\mathbb{Z}}(1) ]= [(\alpha_{p})]$, where $\alpha_{p}=0$ for each prime number $p$.\\
(2) ${\rm type}(\mathbb{Q}) =  [h_{\mathbb{Q}}(1) ]= [(\beta_{p})]$, where $\beta_{p}=\infty$ for each prime number $p$.\\
(3) For $A=\langle \frac{1}{2^{n}}, \frac{1}{3^{m}} ~ | ~ n, m \in \mathbb{N} \rangle$, the subgroup of $\mathbb{Q}$ generated by $\frac{1}{2^{n}}$ and $\frac{1}{3^{m}}$ where $n, m \in \mathbb{N}$,  ${\rm type}(A)= [h_{A}(1) ]=[(\infty, \infty, 0, 0, 0, \ldots)]$.\\
(4) For the subgroup $A=\langle \frac{1}{2}, \frac{1}{3}, \frac{1}{5}, \ldots \rangle$ of $\mathbb{Q}$, we have ${\rm type}(A)= [h_{A}(1) ]= [(1, 1, 1, \ldots)]$.
}
\end{Examp}

The set of height sequences has a partial ordering given by $(\alpha_{p}) \leq (\beta_{p}) $ if $\alpha_{p} \leq \beta_{p}$ for each $p \in \pi$. Now let ${\rm type}(A)= [(\alpha_{p})]$ and ${\rm type}(B)= [(\beta_{p})]$, where $A$ and $B$ are homogeneous torsion-free abelian groups. Then we define ${\rm type}(A) \leq {\rm type}(B)$ if there exist $(\alpha'_{p}) \in [(\alpha_{p})]$ and $(\beta'_{p}) \in [(\beta_{p})]$ such that $(\alpha'_{p}) \leq (\beta'_{p})$. It is easy to see that if ${\rm type}(A) \leq {\rm type}(B)$ and ${\rm type}(B) \leq {\rm type}(A)$, then ${\rm type}(A) = {\rm type}(B)$. Let $A$ be a torsion-free abelian group. Recall that the \textit{rank} of $A$, denoted by  ${\rm rank}(A)$, is defined  ${\rm dim}_{\mathbb{Q}}(A \otimes_{\mathbb{Z}} \mathbb{Q})$.

\begin{Rem}\label{rank subgroup}
\rm{ A torsion-free abelian group $A$ has rank $1$ if and only if $A$ is isomorphic to a subgroup of $\mathbb{Q}$.
For if ${\rm rank}(A)=1$, then $A \otimes_{\mathbb{Z}} \mathbb{Q} \cong_{\mathbb{Q}} \mathbb{Q}$ and since $A$ is flat, $A$ is isomorphic to a subgroup of $\mathbb{Q}$. On the other hand, if $A$ is a subgroup of $\mathbb{Q}$, then since $\mathbb{Q}_{\mathbb{Z}}$ is flat, $A \otimes_{\mathbb{Z}} \mathbb{Q}$ is embedded  in $\mathbb{Q} \otimes_{\mathbb{Z}} \mathbb{Q} \cong_{\mathbb{Q}} \mathbb{Q}$. Thus ${\rm dim}_{\mathbb{Q}}(A \otimes_{\mathbb{Z}} \mathbb{Q}) =1$ and so ${\rm rank}(A)=1$.} 
 \end{Rem}

\begin{Lem}\label{etype}
{\rm{ (see \cite[Theorem 1.1]{Ar})}} Let $A$ and $B$ be torsion-free abelian groups of ${\rm rank}$ $1$. Then $A$  and $B$ are isomorphic  if and only if  ${\rm type}(A) = {\rm type}(B)$.
\end{Lem}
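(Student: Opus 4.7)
The plan is to prove both directions of the biconditional directly, working inside $\mathbb{Q}$. By Remark \ref{rank subgroup} I may assume $A,B\subseteq \mathbb{Q}$, whence both are homogeneous by Remark \ref{homogeneous} and ${\rm type}(A),{\rm type}(B)$ are well-defined. For the forward implication, any isomorphism $\varphi\colon A\to B$ preserves each relation $a\in p^{n}A$, so $h_{p}(a)=h_{p}(\varphi(a))$ for every prime $p$ and every $a\in A$; hence $h_{A}(a)=h_{B}(\varphi(a))$ as honest sequences, and passing to types yields ${\rm type}(A)={\rm type}_{A}(a)={\rm type}_{B}(\varphi(a))={\rm type}(B)$.

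For the converse, I would pick nonzero $a\in A$ and $b\in B$. The equivalence $[h_{A}(a)]=[h_{B}(b)]$ supplies a finite set $S$ of primes outside which $h_{A}(a)$ and $h_{B}(b)$ agree, and at which both are finite integers. Using the elementary identity $h_{p}(na)=h_{p}(a)+\nu_{p}(n)$ for $n\in\mathbb{Z}_{>0}$ (verified in the torsion-free setting via Bezout), I would multiply $a$ by an integer $m_{A}$ absorbing the excess of $h_{B}(b)$ over $h_{A}(a)$ on $S$, and $b$ by an integer $m_{B}$ absorbing the reverse excess, obtaining $a':=m_{A}a\in A$ and $b':=m_{B}b\in B$ with $h_{A}(a')=h_{B}(b')$ exactly (not merely up to equivalence). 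The candidate isomorphism would then be $\varphi\colon A\to B$ defined by multiplication by $b'/a'$, viewing $A,B\subseteq\mathbb{Q}$; since $A$ has rank one, each $x\in A$ has a unique presentation $x=qa'$ with $q\in\mathbb{Q}$ and $\varphi(x)=qb'\in\mathbb{Q}$.

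The heart of the proof is then the characterization
\[
qa'\in A \iff \nu_{p}(q)\ge -h_{p}(a') \text{ for every prime } p,
\]
(with $-\infty$ interpreted as no constraint), together with the analogous statement for $B$ and $b'$. Combined with $h_{A}(a')=h_{B}(b')$, this immediately forces $\varphi(A)\subseteq B$ and $\varphi^{-1}(B)\subseteq A$, so $\varphi$ is a bijection, hence a group isomorphism. I anticipate the main obstacle will be establishing this characterization, which amounts to showing that $A$ is precisely the subgroup of $\mathbb{Q}$ generated by the rationals $a'/p^{h_{p}(a')}$ at primes where $h_{p}(a')$ is finite, together with $a'/p^{k}$ for all $k\ge 1$ at primes where $h_{p}(a')=\infty$, and nothing more. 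This will be a careful prime-by-prime localization argument using divisibility in $\mathbb{Q}$, but once it is in hand the isomorphism $\varphi$ is completely forced by the data $h_{A}(a')=h_{B}(b')$.
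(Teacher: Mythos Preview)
The paper does not supply a proof of this lemma; it is quoted as a classical result with a citation to Arnold's monograph. Your outline is the standard Baer--Arnold argument and is correct. The forward direction is immediate, and for the converse your reduction (multiplying $a$ and $b$ by suitable integers so that $h_{A}(a')=h_{B}(b')$ \emph{exactly}) followed by the multiplication-by-$b'/a'$ map is precisely how the cited reference proceeds.

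The characterization you flag as the ``main obstacle'' is less delicate than you anticipate. One direction is just the identity $h_{p}(ma')=h_{p}(a')+\nu_{p}(m)$ you already recorded. For the other, write $q=m/n$ in lowest terms; the hypothesis $\nu_{p}(q)\ge -h_{p}(a')$ says $a'$ is divisible in $A$ by $p^{\nu_{p}(n)}$ for each prime $p\mid n$, and in a torsion-free group divisibility by pairwise coprime integers implies divisibility by their product via a one-line B\'ezout argument, so $a'/n\in A$ and hence $qa'=m(a'/n)\in A$. With this in hand your map $\varphi$ is a bijection $A\to B$ and the proof is complete.
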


We note that if $\alpha=(\alpha_{p})_{p \in \pi}$ is a height sequence, then  there exists a torsion-free abelian group $A$ of rank $1$ with  ${\rm type}(A)=[\alpha]$, say $A= \langle \frac{1}{p^{n}} ~|~ p \in \pi, n=\alpha_{p} $ when~$ \alpha_{p}\neq \infty $~and~$  n\in  \mathbb{N} $~when~$  \alpha_{p}= \infty  \rangle$. Clearly ${\rm type}(A)=[h_{A}(1)] = [\alpha]$.

\begin{Lem}\label{ltype}
{\rm{(see \cite[Proposition 1.2]{Ar})}} Let $A$ and $B$ be torsion-free abelian groups of ${\rm rank}$ $1$. Then the following statements are equivalent:\\
\indent {\rm (a)} ${\rm Hom}_{\mathbb{Z}}(A, B) \neq 0$.\\
\indent {\rm (b)} ${\rm Hom}_{\mathbb{Z}}(A, B)$ contains a monomorphism.\\
\indent {\rm (c)} ${\rm type}(A) \leq {\rm type}(B)$.
\end{Lem}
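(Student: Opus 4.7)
The plan is to reduce everything to subgroups of $\mathbb{Q}$ using \Cref{rank subgroup}, and then prove the three implications (a)$\Rightarrow$(b)$\Rightarrow$(c)$\Rightarrow$(b) (with (b)$\Rightarrow$(a) trivial). Without loss of generality I will assume $A,B$ are subgroups of $\mathbb{Q}$ (both nonzero), and after multiplying by suitable integers I may further assume $1\in A$ and $1\in B$. In this setup, a subgroup of $\mathbb{Q}$ containing $1$ is completely determined by its height sequence at $1$, via $A=\langle p^{-k}\,|\, p\in\pi,\ 0\le k\le h_p^A(1)\rangle$, and inclusion $A\subseteq B$ corresponds exactly to componentwise inequality $h_p^A(1)\le h_p^B(1)$. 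This dictionary is what makes everything work.

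For (a)$\Rightarrow$(b), let $0\neq f\in\mathrm{Hom}_\mathbb{Z}(A,B)$. Since $A$ embeds in $\mathbb{Q}$, every nonzero subgroup of $A$ is still of rank $1$, while $A/\ker f\cong f(A)\subseteq B$ is torsion-free. If $\ker f\neq 0$, then $\ker f$ has rank $1$, forcing $A/\ker f$ to have rank $0$, i.e.\ to be torsion; combined with torsion-freeness this gives $f(A)=0$, a contradiction. Hence $f$ is mono. For (b)$\Rightarrow$(c), suppose $f\colon A\rightarrowtail B$ is a monomorphism, and pick any $0\neq a\in A$. Writing $a=p^n a'$ in $A$ yields $f(a)=p^n f(a')$ in $B$, so $h_p^A(a)\le h_p^B(f(a))$ for every prime $p$. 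By \Cref{homogeneous} both $A$ and $B$ are homogeneous, so $\mathrm{type}(A)=[h_A(a)]$ and $\mathrm{type}(B)=[h_B(f(a))]$, and the componentwise inequality of representatives gives $\mathrm{type}(A)\le\mathrm{type}(B)$.

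The main obstacle is (c)$\Rightarrow$(b), where I have to actually construct an embedding. Let $(\alpha_p)=h_A(1)$ and $(\beta_p)=h_B(1)$. The hypothesis $\mathrm{type}(A)\le\mathrm{type}(B)$ produces equivalent representatives, which unpacks to: $\alpha_p\le\beta_p$ for all but finitely many $p$, and $\alpha_p=\infty$ forces $\beta_p=\infty$. I will exhibit a nonzero rational $r\in\mathbb{Q}^\times$ such that multiplication by $r$ maps $A$ into $B$; this map is clearly an injection and hence the desired monomorphism. The requirement $rA\subseteq B$ translates, via the height-sequence dictionary, to $v_p(r)+\alpha_p\le\beta_p$ for every prime $p$ (with the convention $v_p(r)+\infty=\infty$). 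At primes with $\alpha_p=\infty$ we have $\beta_p=\infty$ and any $v_p(r)$ works; at the finitely many primes where $\beta_p<\alpha_p$ (both then finite) set $v_p(r)=\beta_p-\alpha_p<0$; at all remaining primes set $v_p(r)=0$. This defines a rational number $r=\prod_p p^{v_p(r)}$ with finite support, hence $r\in\mathbb{Q}^\times$, and by construction $rA\subseteq B$, completing the proof.

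The delicate point in the last paragraph is handling the equivalence relation on height sequences correctly: one must verify that $\mathrm{type}(A)\le\mathrm{type}(B)$ really does exclude the bad case ``$\alpha_p=\infty$ but $\beta_p<\infty$'', otherwise no rational $r$ could compensate. Once that is clarified, the rest is bookkeeping on $p$-adic valuations, and the equivalence (a)$\Leftrightarrow$(b)$\Leftrightarrow$(c) falls out.
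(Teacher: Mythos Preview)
The paper does not actually prove this lemma; it is quoted verbatim from \cite[Proposition~1.2]{Ar} with no argument supplied. Your write-up is essentially the standard proof one finds there, and the implications (a)$\Rightarrow$(b) and (b)$\Rightarrow$(c) are correct as you have them.

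There is, however, a sign slip in your construction for (c)$\Rightarrow$(b). The containment $rA\subseteq B$ does \emph{not} translate to $v_p(r)+\alpha_p\le\beta_p$; it translates to $\alpha_p-v_p(r)\le\beta_p$, i.e.\ $v_p(r)\ge\alpha_p-\beta_p$. Indeed, $p^{-\alpha_p}\in A$ (for $\alpha_p<\infty$), so $r\cdot p^{-\alpha_p}$ has $p$-adic valuation $v_p(r)-\alpha_p$, and lying in $B$ forces this to be at least $-\beta_p$. With your choice $v_p(r)=\beta_p-\alpha_p<0$ at a bad prime $p$, the element $r\cdot p^{-\alpha_p}$ has $p$-valuation $\beta_p-2\alpha_p<-\beta_p$ and is therefore \emph{not} in $B$, so the map you wrote down is not into $B$. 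The repair is immediate: set $v_p(r)=\alpha_p-\beta_p>0$ at those finitely many primes (so $r$ becomes a positive integer rather than a proper fraction), keep $v_p(r)=0$ elsewhere, and then $rA\subseteq B$ holds for the right reason. With this correction your argument is complete and coincides with the classical one.
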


In the following theorem,  we determine which torsion-free abelian groups of rank $1$  are weakly uniserial.

\begin{The}\label{t.f.a.g}
Let $A$ be a torsion-free abelian group of ${\rm rank}$ $1$. Then $A$ is weakly uniserial if and only if ${\rm type}(A)=[(\alpha_{p})]$, where $\alpha_{p}=0$ for all but a finite number $p$ and there is at most one $p$ such that $\alpha_{p}=\infty$.
\end{The}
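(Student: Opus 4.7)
The plan is to convert the question into one about types of rank-$1$ groups. Since $A$ has rank $1$, every nonzero subgroup of $A$ also has rank $1$ and, by \Cref{rank subgroup}, is isomorphic to a subgroup of $\mathbb{Q}$. \Cref{ltype} then says that for nonzero subgroups $N,K\leq A$, $N\rightarrowtail K$ iff ${\rm type}(N)\leq{\rm type}(K)$. Hence $A$ is weakly uniserial iff the set $\{{\rm type}(B):0\neq B\leq A\}$ is totally ordered; moreover, every type $\tau\leq{\rm type}(A)$ is realised by a concrete subgroup of $A$ (the group $\langle 1/p^{\beta_p}:p\in\pi\rangle$ for any representative $(\beta_p)\in\tau$ embeds into $A$ by \Cref{ltype}), so the problem reduces to whether the down-set $\{\tau:\tau\leq{\rm type}(A)\}$ is a chain.

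For sufficiency, write ${\rm type}(A)=[(\alpha_p)]$ with $\alpha_p=0$ outside a finite set of primes and $\alpha_p=\infty$ for at most one prime $p_0$. Let $\tau=[(\beta_p)]\leq[(\alpha_p)]$, witnessed by representatives $(\beta'_p)\leq(\alpha'_p)$ pointwise. The equivalence fixes $\infty$-positions, so $\alpha'_p=\infty$ only possibly at $p_0$, hence $\beta'_p=\infty$ only possibly at $p_0$; and $\alpha'_p=0$ for cofinitely many $p$ forces $\beta'_p=0$ for cofinitely many $p\neq p_0$. Thus $(\beta_p)$ is equivalent either to the zero sequence (yielding ${\rm type}(\mathbb{Z})$) or, when $\beta_{p_0}=\infty$, to the sequence with $\infty$ at $p_0$ and $0$ elsewhere (yielding ${\rm type}(A)$ itself). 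The two candidates are comparable since $\mathbb{Z}\rightarrowtail A$, so the down-set is a chain.

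For necessity, I argue the contrapositive in two cases. If $\alpha_{p_1}=\alpha_{p_2}=\infty$ for distinct primes $p_1,p_2$, put $B_i:=\mathbb{Z}[1/p_i]\leq A$ (which is a subgroup of $A$ because $1/p_i^n\in A$ for all $n$); the $\infty$-positions of ${\rm type}(B_1)$ and ${\rm type}(B_2)$ are $\{p_1\}$ and $\{p_2\}$, which are not nested, so these types are incomparable. If instead at most one $\alpha_p$ is $\infty$ but $\alpha_p\neq 0$ for infinitely many $p$, then $P=\{p:\alpha_p\text{ is finite and positive}\}$ is infinite; partition $P=S_1\sqcup S_2$ into two infinite subsets and set $B_i:=\langle 1/p^{\alpha_p}:p\in S_i\rangle\leq A$. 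A direct denominator computation shows that ${\rm type}(B_i)$ is represented by the height sequence $\gamma_{i,p}=\alpha_p$ on $S_i$ and $0$ elsewhere, and at the infinitely many $p\in S_1$ we have $\gamma_{1,p}>\gamma_{2,p}=0$ (and symmetrically on $S_2$), so no finite alteration of representatives yields pointwise comparability.

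The main technical obstacle is the bookkeeping for the equivalence relation on height sequences: one must carefully track which coordinates are forced (the $\infty$-positions, preserved under equivalence) and which are free (only finitely many finite entries can be altered), and also verify that the subgroups $B_i$ constructed in the necessity proof really realise the claimed types by examining denominators. Once this is set up cleanly, both directions reduce to short pointwise comparisons of sequences.
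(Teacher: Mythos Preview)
Your proof is correct and follows essentially the same route as the paper: both directions are reduced, via \Cref{ltype}, to comparing types of rank-$1$ subgroups, and the necessity argument proceeds by the same two-case construction of incomparable types (two $\infty$-positions versus infinitely many nonzero finite entries). Your framing of sufficiency as ``the down-set $\{\tau:\tau\leq{\rm type}(A)\}$ has at most two elements'' is slightly more conceptual than the paper's direct pairwise comparison, but the content is the same.
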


\begin{proof}

 First assume that $A$ is weakly uniserial. By \Cref{rank subgroup},  we may assume that $A$ is a subgroup of $\mathbb{Q}$ and so, by \Cref{homogeneous}, we can consider the following cases:\\
\textbf{Case 1}: ${\rm type}(A)=[(\underbrace{0, 0,0, \ldots, 0}_{n-times}, m_{1}, m_{2}, m_{3}, \ldots )]$, where $n \in \mathbb{N} $ and $m_{i} \neq 0$ for any $i \geq 1$. Consider the following two height sequences:
\begin{center}
$\alpha=(\underbrace{0, 0,0, \ldots, 0}_{n-times}, m_{1}, 0, m_{3}, 0, m_{5}, \ldots )$, 
$\beta=(\underbrace{0, 0,0, \ldots, 0}_{n-times}, 0, m_{2}, 0, m_{4}, 0,  \ldots )$.
\end{center}
It is easy to see that $[\alpha] \nleq [\beta]$ and $[\beta] \nleq [\alpha]$. Thus if $G_{1}$ and $G_{2}$ are subgroups of $A$, such that ${\rm type}(G_{1})=[\alpha]$ and ${\rm type}(G_{2})=[\beta]$, then by \Cref{ltype}, ${\rm Hom}_{\mathbb{Z}}(G_{1}, G_{2}) =0= {\rm Hom}_{\mathbb{Z}}(G_{2}, G_{1})  $. Therefore $A$ is not weakly uniserial, a contradiction.\\
\textbf{Case 2}:  ${\rm type}(A)=[ (m_{1}, m_{2}, \ldots, m_{n}, 0, 0, 0, \ldots )]$, where $n \in \mathbb{N} $. If there exist $1 \leq i,j \leq n$ with $i < j$ and $m_{i}=m_{j}=\infty$, then we consider the following height sequences:
\begin{center}
$\alpha= (m_{1}, m_{2} \ldots, m_{i-1}, \infty, m_{i+1}, \ldots,  m_{j-1}, 0,  m_{j+1}, \ldots, m_{n}, 0, 0, 0, \ldots)$,
$\beta= (m_{1}, m_{2} \ldots, m_{i-1}, 0, m_{i+1}, \ldots,  m_{j-1}, \infty,  m_{j+1}, \ldots, m_{n}, 0, 0, 0, \ldots)$.
\end{center}
 Clearly 
$[\alpha] \nleq [\beta]$ and $[\beta] \nleq [\alpha]$. Now suppose that $G_{1}$ and $G_{2}$ are subgroups of $A$ such that ${\rm type}(G_{1})=[\alpha]$ and ${\rm type}(G_{2})=[\beta]$. Then by \Cref{ltype}, ${\rm Hom}_{\mathbb{Z}}(G_{1}, G_{2}) =0= {\rm Hom}_{\mathbb{Z}}(G_{2}, G_{1})  $ and so $A$ is not weakly uniserial, a contradiction.  Therefore,  at most one  of the $m_i$'s is equal to $\infty$, as desired.\\
Conversely, suppose that  ${\rm type}(A)=[ (m_{1}, m_{2}, \ldots, m_{n}, 0, 0, 0, \ldots )]$, where $n \in \mathbb{N}$, $m_i\in \mathbb{ N}\cup \{\infty\}$ and at most one  of the $m_i$'s is equal to $\infty$. We show that $A$ is weakly uniserial. Let $G_1$ and $G_2$ be two subgroups of $A$. Clearly,    ${\rm Hom}_{\mathbb{Z}}(G_{1}, A)\neq 0$ and  ${\rm Hom}_{\mathbb{Z}}(G_{2}, A)\neq 0$. Thus by  \Cref{ltype}, ${\rm type}(G_1) \leq {\rm type}(A)$ and ${\rm type}(G_2) \leq {\rm type}(A)$ and we may assume   ${\rm type}(G_1)=[ (k_{1}, k_{2}, \ldots, k_{n}, 0, 0, 0, \ldots )]$ and  ${\rm type}(G_2)=[ (l_{1}, l_{2}, \ldots, l_{n}, 0, 0, 0, \ldots )]$, where $k_i\leq m_i$ and $l_i\leq m_i$ for $1\leq i \leq n$. Consider the following cases:\\
\textbf{Case 1}: $m_i\in  \mathbb{N} $,  for $1\leq i \leq n$. Then it is clear that ${\rm type}(G_1) = {\rm type}(G_2)$ and so by   \Cref{etype}, $G_1\cong G_2.$\\
\textbf{Case 2}: Without loss of generality, we may assume that $m_1=\infty$. In this case, either $k_1=l_1=\infty$ or $k_1, l_1\in  \mathbb{N}$ or only one of them is equal to $\infty$. Therefore either ${\rm type}(G_1) = {\rm type}(G_2)$ or ${\rm type}(G_1) \leq {\rm type}(G_2)$ or ${\rm type}(G_2) \leq {\rm type}(G_1)$. Then  by   \Cref{etype} and \Cref{ltype},   either $G_1\cong G_2$ or one of them is embedded in the other.  
\end{proof}

 Now, by \Cref{t.f.a.g}, we have the following example.

\begin{Examp}
{\rm (1) $\mathbb{Q}_{\mathbb{Z}}$ is not weakly uniserial, because ${\rm type}(\mathbb{Q}) = [(\infty, \infty, \infty, \ldots)]$, see also \Cref{Q not weak}.

(2)  $\mathbb{Z}_{\mathbb{Z}}$ is  weakly uniserial, because ${\rm type}(\mathbb{Z}) = [(0, 0, 0, \ldots)]$.

(3) If $A= \langle \frac{1}{2^{n}}, \frac{1}{3}, \frac{1}{5} ~|~ n \in \mathbb{N} \rangle$, then ${\rm type}(A)=[(\infty, 1, 1, 0, 0, 0, \ldots)]$. Thus $A$ is a weakly uniserial group.

(4) If $A= \langle \frac{1}{2^{n}}, \frac{1}{3^{m}}, \frac{1}{5} ~|~ n, m \in \mathbb{N} \rangle$, then ${\rm type}(A)=[(\infty, \infty, 1, 0, 0, 0, \ldots)]$. Thus  $A$ is not weakly uniserial.

(5)  If $A= \langle \frac{1}{2}, \frac{1}{3}, \frac{1}{5}, \ldots \rangle$, then ${\rm type}(A)=[(1, 1, 1, \ldots)]$. Thus $A$ is not  weakly uniserial.
}
\end{Examp}

\section{\hspace{-6mm}. Finitely generated weakly uniserial modules over commutative principal ideal domains }

In this  section,  we completely determine the structure of  finitely generated weakly uniserial $R$-modules, where $R$ is a commutative principal ideal domain.  In particular, it is shown that a finitely generated $\mathbb{Z}$-module $M$ is weakly uniserial if and only if $M \cong \mathbb{Z}^{n} $ or $M \cong \oplus_{n} \mathbb{Z}_{p}$ or $M \cong \mathbb{Z}_{p^{n}}$, where $p$ is a prime number and $n \geq 0$ is an integer. Also the structure of weakly uniserial $\mathbb{Z}$-modules with nonzero socle is given. \vspace{1.6mm}\\
Let $x$ be an element of a $\Bbb{Z}$-module $M$. The {\it  order} of $x$ is the smallest integer $n$ such that $nx=0$, if no such $n$ exists, the order $x$ is called  {\it infinite}. The order  of $x$ is denoted by ${\rm ord}(x)$.

\begin{Pro}\label{finite abelian}
Let $M$ be a finite right $R$-module and $|M|=p_{1} ^{\alpha_{1}} \ldots  p_{t}^{\alpha_{t}}$, where $t\geq2$,   $\alpha_{i} \in\Bbb{N}$ and $p_i$'s are distinct prime numbers for each $ 1\leq i \leq t$. Then $M$ is not weakly uniserial.
\end{Pro}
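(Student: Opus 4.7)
The plan is to exhibit two nonzero $R$-submodules of $M$ such that neither admits an $R$-monomorphism into the other. Since $M$ is finite, it is in particular a finite abelian group of order $p_{1}^{\alpha_{1}}\cdots p_{t}^{\alpha_{t}}$. For each prime $p_{i}$ dividing $|M|$, I would form the $p_{i}$-primary component
$$ M_{p_{i}} = \{\, m \in M \mid p_{i}^{n} m = 0 \text{ for some } n \geq 1 \,\}, $$
so that by the structure theorem for finite abelian groups each $M_{p_{i}}$ has order $p_{i}^{\alpha_{i}}$; in particular both $M_{p_{1}}$ and $M_{p_{2}}$ are nonzero (using $t \geq 2$).

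The first technical step is to check that each $M_{p_{i}}$ is in fact an $R$-submodule of $M$, not merely an additive subgroup. This rests on the fact that integer scalar multiplication commutes with the $R$-action: for every $r \in R$ and every $m \in M_{p_{i}}$ with $p_{i}^{n} m = 0$, one has $p_{i}^{n}(mr) = (p_{i}^{n} m) r = 0$, so $mr \in M_{p_{i}}$.

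Next I would show that ${\rm Hom}_{R}(M_{p_{i}}, M_{p_{j}}) = 0$ whenever $i \neq j$. Any $R$-homomorphism $f : M_{p_{i}} \to M_{p_{j}}$ is automatically $\mathbb{Z}$-linear. For $x \in M_{p_{i}}$ with $p_{i}^{n} x = 0$, we get $p_{i}^{n} f(x) = 0$; since also $f(x) \in M_{p_{j}}$ is killed by some power $p_{j}^{k}$, the coprimality ${\rm gcd}(p_{i}^{n}, p_{j}^{k}) = 1$ forces $f(x) = 0$. Applied to the pair $M_{p_{1}}, M_{p_{2}}$, this rules out any $R$-monomorphism in either direction, so $M$ is not weakly uniserial.

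The only subtle point, and the step I expect to be the main obstacle, is verifying that the primary components are stable under the $R$-action; once this is settled, the coprimality argument closes the proof immediately. As an alternative route, one could bypass the full primary decomposition and simply pick elements $x_{1}, x_{2} \in M$ of orders $p_{1}$ and $p_{2}$ (guaranteed by Cauchy's theorem for finite abelian groups) and apply the same coprimality argument to the cyclic $R$-submodules $x_{1} R$ and $x_{2} R$, both of which have order a power of the respective prime.
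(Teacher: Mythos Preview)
Your proof is correct and follows essentially the same approach as the paper: both exhibit the primary components $M_{p_i}$ as $R$-submodules and then observe that no monomorphism can exist between components attached to distinct primes. Your verification of $R$-stability (via $p_i^n(mr)=(p_i^n m)r$) is in fact more direct than the paper's, which instead writes $yr$ in components and uses an order argument to kill the off-diagonal terms; and where you invoke B\'ezout to get ${\rm Hom}_R(M_{p_i},M_{p_j})=0$, the paper uses Lagrange/divisibility of orders---but these are minor variations of the same idea.
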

\begin{proof}
By fundamental theorem of finite abelian groups we have $M=M_{1}\oplus \cdots \oplus M_{t}$, where $|M_{i}| = p_{i}^{\alpha_{i}}$, $\alpha_{i} \in \Bbb{N}$ and $p_{i}$'s are distinct prime numbers for each $ 1\leq i \leq t$.
First we show that every $M_{i}$ is a right $R$-submodule of $M$. Assume that $s \in \{1, \ldots, t \}$ and $y \in M_{s}$. Then for every $r\in R$ we have $yr=y_{1}+ \cdots +y_{s}+ \cdots +y_{t}$, where $y_{i}\in M_{i}$ for each $ 1\leq i \leq t $. Hence $p_{s} ^{\alpha_{s}} yr =p_{s} ^{\alpha_{s}} y_{1}+\cdots
+p_{s} ^{\alpha_{s}}y_{s} + \cdots +p_{s} ^{\alpha_{s}}y_{t}$ and so $p_{s} ^{\alpha_{s}}y_{1}+\cdots+p_{s} ^{\alpha_{s}}y_{s-1} + p_{s} ^{\alpha_{s}}y_{s+1}+ \cdots + p_{s} ^{\alpha_{s}}y_{t}=0$. It follows that  $p_{s} ^{\alpha_{s}}y_{1}=\cdots=p_{s} ^{\alpha_{s}}y_{s-1}=p_{s} ^{\alpha_{s}}y_{s+1}= \cdots = p_{s} ^{\alpha_{s}}y_{t}=0$. On the other hand for every $ 0 \neq  y_{i} $, we have ${\rm ord}(y_{i})=p_{i}^{\beta_{i}}$ where  $ 1 \leq\beta_{i}\leq \alpha_{i}$.  Thus  for every $i \neq s$,  we conclude that $p_{i}^{\beta_{i}} \mid p_{s} ^{\alpha_{s}} $ and so $p_{i} \mid p_{s}$, a contradiction. Therefore   for every $i \neq s$, $y_{i}= 0$ and so  $ yr = y_{s} \in M_{s}$. This means that  $M_{s}$ is an $R$-submodule of $M$. Now, if $M$ is a weakly uniserial right $R$-module, then $M_{i} \rightarrowtail M_{j}$ or $M_{j} \rightarrowtail M_{i}$, where $i \neq j$ and $ 1\leq i, j \leq t $. Therefore, $p_{i} \mid p_{j}$ or $p_{j} \mid p_{i}$, a contradiction.
\end{proof}

\begin{Cor}\label{cor of finite}
If $M$ is a finite weakly uniserial right $R$-module, then $|M| = p^{\alpha} $ where $p$ is a prime number and $\alpha \geq 0 $ is an integer.
\end{Cor}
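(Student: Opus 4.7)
The plan is essentially to read this off as the contrapositive of \Cref{finite abelian}. First I would note that $|M|$, being a positive integer, admits a prime factorization $|M| = p_1^{\alpha_1} \cdots p_t^{\alpha_t}$ with $t \geq 0$, where the convention $t=0$ corresponds to $|M|=1$, i.e.\ $M=0$ (handled by $\alpha=0$).

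Next I would split into cases on $t$. If $t \leq 1$, then $|M| = p^{\alpha}$ for some prime $p$ and some integer $\alpha \geq 0$, and there is nothing to prove. If instead $t \geq 2$, then \Cref{finite abelian} applies directly: $M$ decomposes (as a $\mathbb{Z}$-module) as $M_1 \oplus \cdots \oplus M_t$ with $|M_i|=p_i^{\alpha_i}$, each $M_i$ is actually an $R$-submodule of $M$ by the argument in the proof of \Cref{finite abelian}, and the weakly uniserial hypothesis forces $M_i \rightarrowtail M_j$ or $M_j \rightarrowtail M_i$ for any $i \neq j$. Either direction of embedding divides the order on one side into the order on the other, yielding $p_i \mid p_j$ or $p_j \mid p_i$ between two distinct primes, which is impossible.

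There is no real obstacle here, since \Cref{finite abelian} does all the work. The only thing worth being slightly careful about is the trivial case $M = 0$, which is why the statement allows $\alpha = 0$. The proof is therefore essentially one line: combine \Cref{finite abelian} with the fact that the only positive integers not having at least two distinct prime divisors are the prime powers $p^{\alpha}$ with $\alpha \geq 0$.
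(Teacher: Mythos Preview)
Your proposal is correct and matches the paper's approach exactly: the paper states this as an immediate corollary of \Cref{finite abelian} with no separate proof, and your argument is precisely the contrapositive reading of that proposition together with the trivial observation about prime factorizations.
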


\begin{Rem}\label{rem of finite}
{\rm The converse of \Cref{cor of finite} is not necessarily true. For example suppose that $M \cong \mathbb{Z} _{p^{\alpha_{1}}} \oplus \mathbb{Z}_{p^{\alpha_{2}}} \oplus \cdots \oplus \mathbb{Z}_{p^{\alpha_{t}}}$  as $\mathbb{Z}$-modules, where $p$ is a prime number and $\alpha_{i} \in \mathbb{N}$, for each $ 1 \leq i \leq t$. Assume that there exists $s\in \{1, \ldots, t \}$ such that $\alpha _{s} \geq 2$. Then
$M_{1} = (0) \oplus \cdots \oplus (0) \oplus \mathbb{Z}_{p^{\alpha_{s}}} \oplus (0) \oplus \cdots \oplus (0)$ and $M_{2} = p^{\alpha_{1}-1} \mathbb{Z}_{p^{\alpha_{1}}} \oplus (0) \oplus \cdots \oplus (0) \oplus p \mathbb{Z}_{p^{\alpha_{s}}} \oplus (0) \oplus \cdots \oplus (0)$ are  submodules of $M$ that $|M_{1}| = |M_{2}|$, but since $M_{1}$ is cyclic and $ M_{2}$ is not cyclic we conclude that $M_{1} \ncong M_{2}$. Therefore, $M_{1} \not\rightarrowtail M_{2}$ and $M_{2} \not\rightarrowtail M_{1}$.}
\end{Rem}

It is well-known that  being uniserial is not a symmetric property. For example if $R$ is the left skew polynomial ring $ K[x;\sigma] \slash (x^{2})$, where $K$ is a field and $\sigma : K \rightarrow K $ is a ring homomorphism which is not an automorphism, then $R$ is a left uniserial ring that is not right uniserial. In \cite[Lemma 1]{Clark}, W. E. Clark and D. A. Drake showed that if $R$ is a finite ring, then $R$ is right uniserial if and only if it is left uniserial. We could not show that being weakly uniserial is a symmetric property, but the following result shows that under certain conditions a right weakly uniserial ring is also a left weakly uniserial ring.

\begin{Pro}\label{finite ring}
Let $R$ be a finite ring with a principal maximal right ideal. If $R$ is a right weakly uniserial ring, then either $R \cong M_{n}(D)$ where $D$ is a field or $R$ is left and right uniserial.
\end{Pro}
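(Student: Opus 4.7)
The plan is to split into the two cases allowed by \Cref{right artinian right weakly}(b). Since $R$ is finite, it is right Artinian, so either $R \cong M_n(D)$ for some division ring $D$, or $R$ is local. In the former case $D$ is finite (being a subring of the finite ring $R$), so Wedderburn's little theorem makes $D$ a field, which gives the first alternative of the conclusion. From here on I would concentrate on the local case.

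Assume $R$ is local and write $\mathcal{M} = xR$ for the principal maximal right ideal. Since $R$ is local, $\mathcal{M} = J(R)$ is two-sided; and since $R$ is right Artinian, $\mathcal{M}$ is nilpotent. The first step is to prove the identity $\mathcal{M}^i = x^iR$ for every $i \geq 0$ by induction on $i$. A typical element of $\mathcal{M}^{i+1} = \mathcal{M}\cdot\mathcal{M}^i$ is a finite sum $\sum_j (xr_j)n_j = x\sum_j r_j n_j$, and two-sidedness of $\mathcal{M}^i$ keeps each $r_j n_j$ inside $\mathcal{M}^i$; combined with the induction hypothesis this yields $\mathcal{M}^{i+1} = x\mathcal{M}^i = x(x^iR) = x^{i+1}R$.

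With this identity in hand, let $I$ be a nonzero proper right ideal of $R$ and take the largest $i$ with $I\subseteq \mathcal{M}^i$; such an $i$ exists by nilpotence of $\mathcal{M}$. Choose $a \in I$ with $a\notin\mathcal{M}^{i+1}$ and write $a = x^i u$. Maximality of $i$ forces $u\notin\mathcal{M}$, so $u$ is a unit of the local ring $R$, and $x^i = au^{-1} \in aR \subseteq I$. This gives $\mathcal{M}^i = x^iR \subseteq I$, hence $I = \mathcal{M}^i$. Since the zero ideal equals $\mathcal{M}^n$ for the nilpotency index $n$ and $R$ itself is $\mathcal{M}^0$, every right ideal of $R$ is a power of $\mathcal{M}$, so the right ideals form a chain and $R$ is right uniserial. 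The Clark--Drake theorem \cite[Lemma 1]{Clark} quoted just before the proposition then immediately upgrades this to left uniserial.

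The main technical hurdle in this plan is the identity $\mathcal{M}^i = x^iR$, which requires a careful handling of the noncommutative product $\mathcal{M}\cdot\mathcal{M}^i$; past that the argument is a clean application of locality and Nakayama-style reasoning. Note that the weak uniseriality hypothesis itself enters only once, via the dichotomy of \Cref{right artinian right weakly}(b); the local half of the statement is then purely structural, powered by the principal-maximal hypothesis together with Artinian nilpotence of $\mathcal{M}$.
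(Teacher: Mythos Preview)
Your proof is correct, and in the local case it is actually cleaner than the paper's. The paper first argues, using a cardinality comparison, that $\mathcal{M}$ is also \emph{left} principal: since $R$ is finite, $R/\mathcal{M}$ is a field, and the right $R/\mathcal{M}$-space $\mathcal{M}/\mathcal{M}^2$ is one-dimensional (because $\mathcal{M}=xR$); counting elements then forces the left dimension to be $1$ as well, so $\mathcal{M}=Rx+\mathcal{M}^2$, whence $\mathcal{M}=Rx$ by nilpotency. Only after obtaining $\mathcal{M}=xR=Rx$ does the paper conclude $\mathcal{M}^i=x^iR$ and proceed as you do.

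Your induction $\mathcal{M}^{i+1}=x\mathcal{M}^i=x^{i+1}R$ bypasses this detour entirely, exploiting only that $\mathcal{M}^i$ is a two-sided ideal so that $r_jn_j\in\mathcal{M}^i$. This has the conceptual payoff you note at the end: right uniseriality in the local case follows already from ``Artinian local with principal maximal right ideal'', with finiteness entering only via Wedderburn's little theorem (to make $D$ a field, a point the paper skips over) and via the Clark--Drake lemma for left uniseriality. The paper's approach, on the other hand, genuinely uses finiteness in the local case through the cardinality argument, so your route is strictly more elementary there.
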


\begin{proof}
Since $R$ is a right Artinian right weakly uniserial ring, by \Cref{right artinian right weakly}(b), $R$ is local or $R \cong M_{n}(D)$ where $D$ is a field. If $R \cong M_{n}(D)$, then we are done. So assume that $R$ is a local ring and $\M$ is the maximal right ideal of $R$. By hypothesis there exists $x \in R$ such that $\M = xR$. Note that $x \notin \M^{2}$, for if not, we conclude that $\M = \M^{2}$ and since $\M$ is nilpotent we see that $\M = 0$, a contradiction. Since $R$ is a finite ring, $R \slash \M$ is a field and hence $\M \slash \M^{2}$ is a right $R \slash \M$-vector space. Thus $\M \slash \M^{2} \cong R \slash \M$ and so $|\M \slash \M^{2}| = |R \slash \M |$. Note that $\M$ is a two-sided ideal, so that by the following definition  $\M \slash \M^{2}$ is a left $R \slash \M$-vector space
\begin{center}
$(r + \M) (y +\M^{2}) = ry +\M^{2}$,
\end{center}
for any $r \in R$ and any $y \in\M$. Suppose that ${\rm dim}_{R \slash \M}(\M \slash \M^{2})=n < \infty$. Then $|\M \slash \M^{2}| = |R \slash \M |^{n}$ and so $n=1$. Hence $\M \slash \M^{2}$ is a left $R \slash \M$-vector space of dimension $1$ and so $\M=Rx+\M^{2}$. Let $t$ be the index of nilpotency of $\M$. Then $\M=Rx+\M^{2}=Rx+\M^{3}= \ldots = Rx+\M^{t}=Rx$. Thus $\M=xR=Rx$ and $\M^{n}=x^{n}R=Rx^{n}$, for any $n \in \mathbb{N}$. Now by Nakayama's Lemma we have the following proper chain
\begin{center}
$0=x^{t}R \subsetneq x^{t-1}R \subsetneq \cdots \subsetneq x^{2}R \subsetneq xR \subsetneq R$.
\end{center}
Assume that $0 \neq a \in R$ and $i$ is the smallest number that $a \in x^{i}R  \backslash x^{i+1}R$. If $a$ is unit, then $aR=R$. If $a$ is not unit, then $a \in \M=xR$ and since $a \in x^{i}R  \backslash x^{i+1}R$, then $aR \subseteq  x^{i}R $. We show that $x^{i}R \subseteq aR$. Since $aR \subseteq  x^{i}R $, there exists $r \in R$ such that $a= x^{i} r$. If $r$ is unit, then $x^{i}=ar^{-1} \in aR$ and hence $x^{i}R \subseteq aR$. If $r$ is not unit, then $r \in \M=xR$ and so $a=x^{i} r \in x^{i}RxR =x^{i+1}R$, a contradiction. Hence $aR=x^{i}R$ and we conclude that $R$ is a right uniserial ring. Now by \cite[Lemma 1]{Clark}, $R$ is also a left uniserial ring. 
\end{proof}

\begin{Lem}\label{torsion and torsion-free}
If $M$ is a weakly uniserial right $R$-module, then  $M$  does not include nonzero torsion submodules and torsion-free submodules simultaneously.
\end{Lem}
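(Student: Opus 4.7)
The plan is to assume for contradiction that $M$ contains both a nonzero torsion submodule $T$ and a nonzero torsion-free submodule $F$, then pick witness elements $0\neq t\in T$ (so $ts=0$ for some regular $s\in R$) and $0\neq f\in F$, and apply the weakly uniserial hypothesis to the cyclic submodules $tR$ and $fR$ to derive a contradiction in each of the two possible embedding directions.

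The key observation is that any $R$-homomorphism commutes with right multiplication by a regular element, so torsion is preserved under monomorphisms. More precisely: if $\varphi\colon tR \rightarrowtail fR$ is a monomorphism, then $\varphi(t)\cdot s = \varphi(ts)=0$, so $\varphi(t)\in fR\subseteq F$ is killed by the regular element $s$; since $F$ is torsion-free this forces $\varphi(t)=0$, contradicting injectivity of $\varphi$ together with $t\neq 0$. Symmetrically, if $\psi\colon fR\rightarrowtail tR$ is a monomorphism, then $\psi(f)\in tR\subseteq T$ is torsion (noting that $T$ being a submodule gives $tR\subseteq T$), so there is a regular $u\in R$ with $\psi(f)u=0$; then $\psi(fu)=0$ forces $fu=0$, contradicting $0\neq f\in F$ with $F$ torsion-free.

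Since the weakly uniserial hypothesis on $M$ (or equivalently, on the submodule $tR+fR$, using \Cref{submodule}(1)) guarantees that one of these two monomorphisms must exist, both cases lead to contradictions and the lemma follows. I do not anticipate a serious obstacle here: the argument is essentially the trivial fact that the functor sending a module to the set of elements annihilated by a fixed regular element is preserved under $R$-linear maps, combined with the definitions. The only minor subtlety to be careful about is ensuring that the elements $\varphi(t)$ and $\psi(f)$ actually lie in the appropriate submodule ($F$ or $T$) so that the torsion/torsion-free hypotheses can be applied, which is immediate from $\varphi(tR)\subseteq fR\subseteq F$ and $\psi(fR)\subseteq tR\subseteq T$.
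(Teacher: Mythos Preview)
Your proof is correct and follows essentially the same approach as the paper: both argue that a monomorphism from a torsion (sub)module into a torsion-free one, or vice versa, is impossible because $R$-homomorphisms commute with right multiplication by regular elements. The only cosmetic difference is that you first pass to the cyclic submodules $tR$ and $fR$, whereas the paper applies the weakly uniserial hypothesis directly to the given submodules $T$ and $F$; this reduction is harmless but unnecessary.
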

\begin{proof}
Assume that $N$ and $K$ are nonzero torsion-free and torsion submodules of $M$, respectively. Since $M$ is weakly uniserial, $N \rightarrowtail K$ or $ K \rightarrowtail N$. If $N \overset{f} \rightarrowtail K$, then for every $n \in N$ there exists a regular element $r$ of $R$ such that $0=f(n)r=f(nr)$. Hence $nr=0$ and so $n=0$, a contradiction. If $K \overset{f}\rightarrowtail N$, then for every $k \in K$ there exists a regular element $r$ in $R$ such that $kr=0$. Hence $f(k)r=f(kr)=0$, and so $f(k)=0$. This means that  $k=0$, a contradiction.
\end{proof}

\begin{The}\label{fundamental}
Let $R$ be a commutative principal ideal domain and $M$ be a finitely generated $R$-module. Then $M$ is weakly uniserial if and only if $M \cong R^{n}$ or $M \cong \oplus_{n} (R/{pR})$ or $M \cong R/{p^{n}R}$, where $p$ is a prime element of $R$ and $n \geq 0$ is an integer.
\end{The}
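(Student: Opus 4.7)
The plan is to verify each of the three families is weakly uniserial using earlier results. Since $R$ is a principal ideal domain, every free $R$-module, in particular $R^{n}$, is weakly uniserial by the first part of \Cref{domain}(c). The module $\oplus_{n}(R/pR)$ is homogeneous semisimple, hence weakly uniserial by \Cref{homogeneous semisimple} (equivalently \Cref{Some exam}(1)). Finally, $R/p^{n}R$ is uniserial because its submodule lattice is the chain $0\subset p^{n-1}R/p^{n}R\subset\cdots\subset pR/p^{n}R\subset R/p^{n}R$, so it is in particular weakly uniserial.

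\textbf{Forward direction.} Apply the fundamental theorem for finitely generated modules over a PID to write $M\cong R^{n}\oplus T$, where $T\cong\bigoplus_{i=1}^{k}R/p_{i}^{a_{i}}R$ is the torsion submodule. By \Cref{torsion and torsion-free}, $M$ cannot simultaneously contain a nonzero torsion-free submodule and a nonzero torsion submodule, so either $T=0$ (yielding $M\cong R^{n}$, and we are done) or $n=0$ (so $M$ is torsion, nonzero, and we proceed).

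Next I would show all $p_{i}$ must be associate. If $p$ and $q$ were non-associate primes appearing among the $p_{i}$, the corresponding primary components $M_{p}$ and $M_{q}$ would be nonzero submodules of $M$. Any $R$-monomorphism $f\colon M_{p}\rightarrowtail M_{q}$ would send $0\neq x\in M_{p}$ with $p^{k}x=0$ to $f(x)\in M_{q}$ satisfying both $p^{k}f(x)=0$ and $q^{m}f(x)=0$ for some $m$; since $\gcd(p^{k},q^{m})=1$, Bezout gives $f(x)=0$, contradicting injectivity. Symmetrically $M_{q}\not\rightarrowtail M_{p}$, contradicting weak uniseriality. Hence $M\cong\bigoplus_{i=1}^{k}R/p^{a_{i}}R$ for a single prime $p$ (up to units), with $1\leq a_{1}\leq\cdots\leq a_{k}$.

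\textbf{The key step (and main obstacle).} It remains to show that either $k=1$, giving $M\cong R/p^{n}R$, or all $a_{i}=1$, giving $M\cong\oplus_{k}(R/pR)$. Suppose for contradiction that $k\geq 2$ and $a_{k}\geq 2$. Modelled on \Cref{rem of finite}, inside the direct summand $S=R/p^{a_{1}}R\oplus R/p^{a_{k}}R$ of $M$ consider the two submodules
\[
N_{1}=0\oplus R/p^{a_{k}}R,\qquad N_{2}=p^{a_{1}-1}R/p^{a_{1}}R\ \oplus\ pR/p^{a_{k}}R\ \cong\ R/pR\ \oplus\ R/p^{a_{k}-1}R.
\]
Both have composition length $a_{k}$, but their elementary divisor decompositions are $(p^{a_{k}})$ and $(p,p^{a_{k}-1})$ respectively; since $a_{k}\geq 2$, these are distinct, so $N_{1}\not\cong N_{2}$. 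However, a monomorphism between finite-length modules of equal length is forced to be an isomorphism (its image has the same length as the codomain, and any proper submodule of a finite-length module has strictly smaller length by Jordan--Hölder). Hence $N_{1}\not\rightarrowtail N_{2}$ and $N_{2}\not\rightarrowtail N_{1}$, contradicting weak uniseriality of $M$ (via \Cref{submodule}(1) applied to $S\subseteq M$). The main obstacle is precisely this fabrication: finding two submodules of equal length but incomparable elementary divisor decompositions, so that the equal-length-forces-iso principle blocks every possible embedding. This completes the classification.
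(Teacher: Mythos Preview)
Your proof is correct and follows the same overall architecture as the paper's: structure theorem, then \Cref{torsion and torsion-free} to separate the free and torsion cases, then reduction to a single prime, then the key step ruling out $k\geq 2$ with some exponent $\geq 2$. The backward direction is identical.

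The only genuine difference is in the mechanics of the key step. The paper compares the uniform module $R/p^{\alpha_i}R$ (with $\alpha_i\geq 2$) against the length-$2$ semisimple module $(p^{\alpha_i-1}R/p^{\alpha_i}R)\oplus(p^{\alpha_j-1}R/p^{\alpha_j}R)\cong R/pR\oplus R/pR$, blocking one embedding by uniformity and the other by an annihilator argument. You instead (following the hint in \Cref{rem of finite}) choose $N_1$ and $N_2$ of \emph{equal} composition length $a_k$ but distinct elementary divisors, so that the single principle ``a monomorphism between finite-length modules of equal length is an isomorphism'' kills both directions at once. Your version is a bit more symmetric and avoids invoking uniformity separately; the paper's version is slightly more hands-on but uses smaller test modules. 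Both are short and valid.

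One minor remark on your non-associate-primes step: the paper argues directly with the cyclic summands $R/p_i^{\alpha_i}R$ and $R/p_j^{\alpha_j}R$ via annihilator containment, which is marginally quicker than passing to the full primary components $M_p,M_q$ and invoking B\'ezout, but your argument is of course fine.
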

\begin{proof}
Assume that $M$ is a  nonzero  weakly uniserial $R$-module. By the fundamental structure theorem for finitely generated modules over a principal ideal domain, $M \cong R^{n} \oplus R/{p_{1}^{\alpha_{1}}R} \oplus \cdots \oplus R/{p_{r}^{\alpha_{r}}R} $, where $n \geq 0$ is an integer, $\alpha_i\geq 0$ for $1\leq i \leq r$ and  each $ p_i$  is a prime element in $R$. Since
$R$ is torsion-free and for any  $i$, $R/{p_{i}^{\alpha_{i}}R}$ is torsion, by \Cref{torsion and torsion-free}, this decomposition does not include $R$ and $R/{p_{i}^{\alpha_{i}}R}$  simultaneously. 
 Thus $M \cong R^{n}$ or $M \cong  R/{p_{1}^{\alpha_{1}}R} \oplus \cdots \oplus R/{p_{r}^{\alpha_{r}}R}  $. Suppose that $M \cong  R/{p_{1}^{\alpha_{1}}R} \oplus \cdots \oplus R/{p_{r}^{\alpha_{r}}R}  $. We proceed by cases:\\
\textbf{Case 1:} There exist $ p_i, p_j\in \{ p_1, \ldots, p_r\}$ such that $p_i$ and $p_j$ are not associates. In this case we consider $R$-module   $R/{p_i^{\alpha_{i}}R} \oplus R/{p_j^{\alpha_{j}}R} $. Since $M$ is weakly uniserial, without loss of generality, we can assume that  $R/{p_i^{\alpha_{i}}R} \rightarrowtail   R/{p_j^{\alpha_{j}}R} $. It follows that  ${\rm Ann}_R(R/{p_j^{\alpha_{j}}R}) \subseteq {\rm Ann}_R(R/{p_i^{\alpha_{i}}R})$ and so $p_j^{\alpha_j}=p_i^{\alpha_i} r $, for some $r\in R$. Thus $p_i |p_j^{\alpha_j}$ and hence 
$p_i |p_j$, i.e.,  $p_i$ and $p_j$ are  associates, a contradiction.\\
\textbf{Case 2:} For every $1 \leq i,j \leq r$,   $p_i$ and $p_j$ are associates. In this case we can write $M \cong  R/p^{\alpha_1}R \oplus \cdots \oplus R/p^{\alpha_r}R$, for some prime element $p$ of $R$. We show that if $r>1$, then  for any  $1 \leq i \leq r$, $\alpha_i=1$. Suppose that $\alpha_i  > \alpha_j\geq 1$, for some  $1 \leq i,j \leq r$.  It is easy to check that   $ R/p^{\alpha_i}R$ is uniform (uniserial). Thus $(p^{\alpha_{i}-1}R/p^{\alpha_{i}}R)  \oplus (p^{\alpha_{j}-1} R/p^{\alpha_{j}}R) \not\rightarrowtail   R/p^{\alpha_{i}}R $. Since $M$ is weakly uniserial, we must have  $    R/p^{\alpha_{i}}R     \rightarrowtail    (p^{\alpha_{i}-1}R/p^{\alpha_{i}}R)  \oplus (p^{\alpha_{j}-1} R/p^{\alpha_{j}}R) $. But it follows that $p \in {\rm Ann}_R (R/p^{\alpha_i}R )$, a contradiction. Therefore for any $i$, $\alpha_i=1$ and the proof of one direction is complete.\\
Convesely, if   $M \cong R/{p^{n}R}$, then $M$ is  uniserial and so it is weakly uniserial.  If $M \cong \oplus_{n}  (R/{pR})$, then by \Cref{Some exam}(1), $M$ is weakly uniserial. Finally, if $M \cong R^{n}$, then by \Cref{domain}(c), $M$ is weakly uniserial.
\end{proof}

\begin{Cor}
A finitely generated $\mathbb{Z}$-module $M$ is weakly uniserial if and only if $M \cong \mathbb{Z}^{n} $ or $M \cong \oplus_{n} \mathbb{Z}_{p}$ or $M \cong \mathbb{Z}_{p^{n}}$, where $p$ is a prime number and $n \geq 0$ is an integer.
\end{Cor}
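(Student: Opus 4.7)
The plan is to derive this statement as an immediate corollary of \Cref{fundamental}, which has already been proved for an arbitrary commutative principal ideal domain $R$. Since $\mathbb{Z}$ is a commutative principal ideal domain, the hypotheses of that theorem are satisfied with $R = \mathbb{Z}$, and so its conclusion applies verbatim to any finitely generated $\mathbb{Z}$-module $M$.

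The only work required is to translate the conclusion of \Cref{fundamental} into the standard notation for $\mathbb{Z}$-modules. First I would observe that the prime elements of $\mathbb{Z}$ are, up to associates (i.e., multiplication by the units $\pm 1$), precisely the prime numbers $p$. Next I would note the canonical identifications $\mathbb{Z}/p\mathbb{Z} \cong \mathbb{Z}_{p}$ and $\mathbb{Z}/p^{n}\mathbb{Z} \cong \mathbb{Z}_{p^{n}}$ as $\mathbb{Z}$-modules. Substituting these into the three possible forms $R^{n}$, $\oplus_{n}(R/pR)$, and $R/p^{n}R$ from \Cref{fundamental} yields exactly the three forms $\mathbb{Z}^{n}$, $\oplus_{n}\mathbb{Z}_{p}$, and $\mathbb{Z}_{p^{n}}$ appearing in the statement.

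Since \Cref{fundamental} is an if-and-only-if statement and the translation of its conclusion into the language of $\mathbb{Z}$-modules is a bijective correspondence at the level of isomorphism classes, both directions of the corollary follow simultaneously without further argument. There is no genuine obstacle here: the main theoretical content has already been absorbed into the proof of \Cref{fundamental}, and this corollary is purely a notational specialization to the case $R = \mathbb{Z}$.
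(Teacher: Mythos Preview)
Your proposal is correct and matches the paper's approach exactly: the paper states this corollary immediately after \Cref{fundamental} with no proof, treating it as the direct specialization $R=\mathbb{Z}$ that you describe. There is nothing to add.
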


We conclude the paper with the following result, which gives the structure of weakly uniserial $\mathbb{Z}$-modules with nonzero socle.

\begin{Pro}\label{w.uni z-module}
Every $\mathbb{Z}$-module $M$ is weakly uniserial with ${\rm Soc}(M) \neq 0$ if and only if $M \cong \mathbb{Z}_{p^{n}}$ or $M \cong \mathbb{Z}_{p^{\infty}}$ or $M \cong \oplus_{I} \mathbb{Z}_{p} $, where $p$ is a prime number, $n \in \mathbb{N} $, and $I$ is an index set.
\end{Pro}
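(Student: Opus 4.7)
My plan is to handle the two directions separately. For the reverse direction, each listed module has nonzero socle and is weakly uniserial: $\mathbb{Z}_{p^n}$ and $\mathbb{Z}_{p^\infty}$ are uniserial (their submodules form a chain), while $\oplus_I \mathbb{Z}_p$ is homogeneous semisimple, hence weakly uniserial by \Cref{homogeneous semisimple}, and equals its own socle.

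For the forward direction, assume $M$ is weakly uniserial with ${\rm Soc}(M) \neq 0$. First I pin down a single prime: by \Cref{essential socle}(a), ${\rm Soc}(M)$ is homogeneous semisimple and essential in $M$, so ${\rm Soc}(M) \cong \oplus_I \mathbb{Z}_p$ for a uniquely determined prime $p$ and nonempty index set $I$. Next I show $M$ is a $p$-primary abelian group. An element of infinite order would give a torsion-free cyclic submodule alongside the torsion submodule ${\rm Soc}(M)$, contradicting \Cref{torsion and torsion-free}; and an element whose order is divisible by a prime $q \neq p$ would, by taking a suitable multiple, yield a simple submodule $\cong \mathbb{Z}_q$, contradicting the homogeneity of ${\rm Soc}(M)$. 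Hence every element of $M$ has $p$-power order.

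Now I split into two cases. If $M = {\rm Soc}(M)$, then $M \cong \oplus_I \mathbb{Z}_p$ and we are done. Otherwise pick $x \in M \setminus {\rm Soc}(M)$; then $x\mathbb{Z} \cong \mathbb{Z}_{p^k}$ with $k \geq 2$. Weak uniseriality gives either $x\mathbb{Z} \rightarrowtail {\rm Soc}(M)$ or ${\rm Soc}(M) \rightarrowtail x\mathbb{Z}$. The first is impossible, since ${\rm Soc}(M)$ is annihilated by $p$ while $x$ is not. The second supplies an embedding $\oplus_I \mathbb{Z}_p \rightarrowtail \mathbb{Z}_{p^k}$, and since $\mathbb{Z}_{p^k}$ has simple socle $\mathbb{Z}_p$, this forces $|I|=1$; thus ${\rm Soc}(M) \cong \mathbb{Z}_p$.

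With a simple socle, every nonzero finitely generated submodule $N$ of $M$ is a finite abelian $p$-group whose socle $N[p] \subseteq M[p] \cong \mathbb{Z}_p$ is cyclic, forcing $N$ itself to be cyclic by the fundamental theorem. Applying this to $N_1 + N_2$ for any two cyclic submodules shows their sum is cyclic, hence $N_1, N_2$ are nested. Therefore all cyclic submodules of $M$ form a chain under inclusion, and $M$ is their union. If the chain contains a cyclic subgroup of maximal order $p^n$, then $M \cong \mathbb{Z}_{p^n}$; otherwise $M$ contains copies of $\mathbb{Z}_{p^k}$ for every $k \geq 1$, all nested, and $M \cong \mathbb{Z}_{p^\infty}$. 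The main obstacle is this final passage from ``mutually embeddable'' (given by weak uniseriality) to ``genuinely nested'': the trick is that a simple socle collapses $N_1 + N_2$ back into the cyclic class, so that the embedding relations among cyclics can be upgraded to inclusion relations in a single ambient cyclic $p$-group.
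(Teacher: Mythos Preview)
Your proof is correct and follows a route that is structurally close to the paper's but differs in two meaningful places.

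First, your case split is the contrapositive of the paper's. The paper assumes ${\rm Soc}(M)$ is not simple and shows directly that every nonzero element has order $p$, concluding that $M$ is a $\mathbb{Z}_p$-vector space; you instead assume $M \neq {\rm Soc}(M)$ and deduce that ${\rm Soc}(M)$ must be simple. These are logically equivalent, but your annihilator/socle argument (comparing $x\mathbb{Z} \cong \mathbb{Z}_{p^k}$ with ${\rm Soc}(M)$) is cleaner than the paper's, which invokes Sylow's theorem to produce a subgroup of order $q^2$ and then compares it with $S_1 \oplus S_2$.

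Second, and more substantially, for the simple-socle case the paper simply quotes an external result (\cite[Theorem 2.6]{Yah}) stating that a cocyclic $\mathbb{Z}$-module embeds in $\mathbb{Z}_{p^\infty}$, and reads off the conclusion. You instead argue directly: a finite abelian $p$-group with cyclic $p$-torsion is cyclic, so every finitely generated submodule of $M$ is cyclic, hence any two are nested, and $M$ is the union of a chain of cyclic $p$-groups---giving $\mathbb{Z}_{p^n}$ or $\mathbb{Z}_{p^\infty}$. This is self-contained and avoids the external citation, at the cost of a few more lines. Your closing remark about upgrading embeddability to inclusion via $N_1+N_2$ is exactly the point that makes this work.

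Both approaches establish $p$-primariness of $M$ the same way (no infinite order via \Cref{torsion and torsion-free}, no foreign prime via homogeneity of the socle); you front-load this step, the paper defers it to the non-simple-socle case.
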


\begin{proof}
Suppose that $M$ is a weakly uniserial $\mathbb{Z}$-module with ${\rm Soc}(M) \neq 0$. Then by \Cref{essential socle}(a), ${\rm Soc}(M) \subseteq_{e} M$. If ${\rm Soc}(M)$ is simple, then $M$ is cocyclic and so by \cite[Theorem 2.6]{Yah}, there exists a prime number $p$ such that $M \rightarrowtail {\rm Hom}_{\mathbb{Z}}( \mathbb{Z} , \mathbb{Z}_{p^{\infty}} ) \cong \mathbb{Z}_{p^{\infty}}$. Hence $M \cong \mathbb{Z}_{p^{\infty}}$ or $M \cong \mathbb{Z}_{p^{n}}$ for some $n \geq 0$. Now assume that ${\rm Soc}(M)$ is not simple and $S$ is a simple submodule of $M$. Then $S = x \mathbb{Z}$ for some $x \in S$. If ${\rm ord}(x)=\infty$, then $x\mathbb{Z}\cong \mathbb{Z} $, a contradiction. Thus ${\rm ord}(x) < \infty$ and we conclude that $| x \mathbb{Z}| = |S| =p$ for some prime number $p$. Now we claim that ${\rm ord}(m)=p$ for every $0 \neq m \in M$. If there exists $m \in M$ such that ${\rm ord}(m)=\infty$, then $m\mathbb{Z}\cong \mathbb{Z} $ and since $M$ is weakly uniserial, $m\mathbb{Z} \rightarrowtail S$ or $S \rightarrowtail m\mathbb{Z}$, which in any case we have a contradiction. Thus ${\rm ord}(m)<\infty$ and so $m\mathbb{Z}$ is a finite weakly uniserial $\mathbb{Z}$-module. By \Cref{cor of finite}, $|m\mathbb{Z}|=q^{n}$ where $q$ is a prime number and $n \in \mathbb{N} $. We show that $n=1$ and $q=p$. If $n \geq 2$, then by the first Sylow's Theorem $m\mathbb{Z}$ contains a subgroup $N$ of order $q^{2}$. Since ${\rm Soc}(M)$ is not simple, there exists $S_{1}\oplus S_{2} \subseteq {\rm Soc}(M)$, where $S_{1}$ and $S_{2}$ are  isomorphic simple submodules and $|S_{1}|=|S_{2}|=p$. Thus $S_{1}\oplus S_{2} \rightarrowtail N$ or $N \rightarrowtail S_{1}\oplus S_{2} $, a contradiction. Hence ${\rm ord}(m)=q$ for every $0 \neq m \in M$. Again since $M$ is weakly uniserial, $m\mathbb{Z} \rightarrowtail S$ or $S \rightarrowtail m\mathbb{Z}$ and so $p=q$. Therefore, $M$ is a vector space over $\mathbb{Z}_{p}$ by multiplication $\overline{c}x=cx$ for every $\overline{c} \in \mathbb{Z}_{p}$ and any $x \in M$. Then $M \cong \oplus_{I} \mathbb{Z}_{p}$ where $I$ is an index set. The converse is clear.
\end{proof}
{\bf Acknowledgment.}  The research of the third author was in part supported by a  grant from IPM (No. 1400160414). This research is partially carried out in the IPM-Isfahan Branch.

\end{document}